\newtheorem*{claim*}{Claim}
\theoremstyle{theorem}
\theoremstyle{remark}
\DeclareMathOperator{\length}{\ell}
\renewcommand{\O}{\mathcal{O}}
\renewcommand{\m}{\mathfrak{m}}
\DeclareMathOperator{\cokernel}{cokernel}
\renewcommand{\:}{\colon}
\DeclareMathOperator{\sdim}{sdim}
\DeclareMathOperator{\vol}{vol}
\renewcommand{\phi}{\varphi}
\renewcommand{\theta}{\vartheta}
\renewcommand{\epsilon}{\varepsilon}
\renewcommand{\to}[1][]{\xrightarrow{\ #1\ }}
\newcommand{\into}[1][]{\lhook \joinrel \xrightarrow{\ #1\ }}
\begin{document}

\title [$F$-signature of pairs]{$F$-signature of pairs and the asymptotic behavior of Frobenius splittings}
\author{Manuel Blickle, Karl Schwede, Kevin Tucker}

\address{ Institut f\"ur Mathematik\\ Johannes Gutenberg-Universit\"at Mainz\\55099 Mainz, Germany}
\email{blicklem@uni-mainz.de}
\address{Department of Mathematics\\ The Pennsylvania State University\\ University Park, PA, 16802, USA}
\email{schwede@math.psu.edu}
\address{Department of Mathematics\\ University of Utah\\ Salt Lake City, UT, 84112, USA}
\email{kevtuck@math.utah.edu}

\thanks{The first author was partially supported by a Heisenberg Fellowship and the SFB/TRR45}
\thanks{The second author was partially supported by an NSF postdoctoral fellowship and the NSF grant DMS \#1064485}
\thanks{The third author was partially supported by the NSF postdoctoral fellowship DMS \#1004344}

\subjclass[2010]{13A35, 13D40, 14B05, 13H10}
\keywords{$F$-signature, Cartier algebra, $F$-pure, $F$-regular, splitting prime, $F$-splitting ratio}
\maketitle

\begin{abstract}
We generalize $F$-signature to pairs $(R,\sD)$ where $\sD$ is a Cartier subalgebra on $R$ as defined by the first two authors.  In particular, we show the existence and positivity of the $F$-signature for any strongly $F$-regular pair.  In one application, we answer an open question of I. Aberbach and F. Enescu by showing that the $F$-splitting ratio of an arbitrary $F$-pure local ring is strictly positive.  Furthermore, we derive effective methods for computing the $F$-signature and the $F$-splitting ratio in the spirit of the work of R. Fedder.
\end{abstract}


\section{Introduction}
When working with rings or schemes in prime characteristic $p>0$, sections of the Frobenius endomorphism are called \mbox{\emph{$F$-splittings}}. When such a splitting exists the various iterates of Frobenius must split as well, and limiting constructions
often allow one to conclude numerous desirable algebraic and geometric
properties
\cite{HochsterRobertsRingsOfInvariants,BrionKumarFrobeniusSplitting}.
In this article, we investigate the following natural question
concerning the local asymptotic behavior of the number of splittings of large iterates of Frobenius.

\begin{question}[{\textit{cf.} \cite[Question 4.9]{AberbachEnescuStructureOfFPure}}]
\label{ques:motivatingquestion}
  If $R$ is a local ring of equal characteristic $p > 0$, how many splittings does the $e$-iterated Frobenius map $F^{e} \: R \to R$ have for $e \gg 0$?
\end{question}

We assume for simplicity in the introduction that $R$ is a complete
local domain with perfect residue field. In this case, the
module-finite inclusion $R \subseteq R^{1/p^{e}}$ into the corresponding ring of $p^{e}$-th roots of elements of $R$ is naturally identified with the $e$-iterated Frobenius.  One obtains a precise measure of the number of distinct splittings of $F^{e} \: R \to R$ by writing $R^{1/p^{e}} = R^{\oplus a_{e}} \oplus M_{e}$ as $R$-modules where $M_{e}$ has no free direct summands.  The number $a_{e}$ is independent of the corresponding direct sum decomposition and is called the \emph{$e$-th \mbox{$F$-splitting }number of $R$}.

If $d$ equals the dimension of $R$, a well-known result of E. Kunz gives that $a_{e} \leq p^{ed}$ for all $e > 0$ with equality if and only if $R$ is regular.  For arbitrary $R$, this observation motivated C. Huneke and G. Leuschke \cite{HunekeLeuschkeTwoTheoremsAboutMaximal}  (\cf
\cite{SmithVanDenBerghSimplicityOfDiff}) to consider the \emph{$F$-signature of $R$}
\[
s(R) = \lim_{e \to \infty} {a_e \over p^{ed}}
\]
which asymptotically compares the $F$-splitting numbers of $R$ to
those of a regular ring of the same dimension.  This limit, which has
only recently been shown to exist in full generality by the third
author \cite{TuckerFSignatureExists}, detects the severity of the
singularities of $R$.  In particular, I.~Aberbach and G.~Leuschke
\cite{AberbachLeuschke} have shown that $s(R) > 0$ if and only if $R$
is strongly $F$-regular.  In other words, if $R$ is strongly
$F$-regular, the answer to Question~\ref{ques:motivatingquestion} above is that the $F$-splitting numbers $a_{e}$ are (up to a positive constant) on the order of $p^{ed}$ for $e \gg 0$.  However, if $R$ is not strongly $F$-regular, then $s(R) = 0$ and so the $F$-splitting numbers $a_e$ grow at a rate strictly less than $p^{ed}$ for $e \gg 0$.

The aim of this article is to vastly generalize the theory of $F$-signature to so-called pairs.  For example, we shall give definitions for the $F$-signature of the ideal pairs $(R, \mathfrak{a}^{t})$ used in \cite{HaraYoshidaGeneralizationOfTightClosure} as well as the divisor pairs from \cite{HaraWatanabeFRegFPure} that appear throughout birational algebraic geometry.  While interesting in its own right, this theory also has a number of powerful applications in the classical setting.  In particular, it can be used to give an answer to Question~\ref{ques:motivatingquestion} in full generality.

In \cite{AberbachEnescuStructureOfFPure}, I. Aberbach and F. Enescu approached Question~\ref{ques:motivatingquestion} through the use of a naturally defined prime ideal $P$ called the \emph{$F$-splitting prime of $R$}.  In particular, they showed that the growth rate of the $F$-splitting numbers $a_{e}$ is bounded above by $p^{e\dim(R/P)}$, and they further proposed to study the limit (shown to exist in \cite{TuckerFSignatureExists})
\[
r_{F}(R) = \lim_{e \to \infty} \frac{a_{e}}{p^{e\dim(R/P)}}
\]
called the \emph{$F$-splitting ratio of $R$}.  One of the main applications of $F$-signature of pairs contained herein is the following result.

\medskip
\noindent
{\bfseries Corollary~\ref{cor:fsplitratio}.}
 If $R$ is $F$-pure (\textit{i.e.} has an $F$-splitting), then $r_{F}(R)>0$ is strictly positive.
\medskip

This result gives a complete answer to Question~\ref{ques:motivatingquestion}, as we have that the $F$-splitting numbers are either all zero or are (up to a positive constant) on the order of $p^{e\dim(R/P)}$ for $e \gg 0$ where $P$ is the $F$-splitting prime of $R$.  We remark that our result was certainly anticipated by I. Aberbach and F. Enescu (\textit{cf.} \cite[Question 4.9]{AberbachEnescuStructureOfFPure} and Remark~\ref{rmk:aeremark} below).

Let us briefly sketch the proof of the above result as a lead in to further discussion of \mbox{$F$-signature} of pairs.  In the notation above, one first observes that $R/P$ is strongly \mbox{$F$-regular}.  Thus, the $F$-splitting numbers of $R/P$ grow like $p^{e\dim(R/P)}$ for $e \gg 0$, suggesting a change of setting to $R/P$.
Indeed, every $R$-linear map $R^{1/p^e} \to R$ induces an $R/P$-linear map $(R/P)^{1/p^e} \to R/P$, so that the $F$-splittings of $R$ induce $F$-splittings of $R/P$.  However, the difficulty lies in that a large number of $F$-splittings of $R/P$ do not arise in this manner (see Section~\ref{sec:whitneys-umbrella} for an explicit example). Rather, one needs a way to enumerate only certain kinds of \mbox{$F$-splittings}, which is precisely the idea of $F$-signature of pairs.

More generally, one starts with specified collections $\sD_{e}$ of $R$-linear maps $R^{1/p^e} \to R$ for each $e \geq 0$.  Following \cite{SchwedeTestIdealsInNonQGor, BlickleTestIdealsViaAlgebras}, we require that these collections $\sD_{e}$ can be put together to form a non-commutative $\N$-graded ring $\sD$ under function composition (after taking the necessary $p$-th roots), called a \emph{Cartier subalgebra on $R$}.  In particular, this ensures that there is a well-defined notion of $F$-regularity for the pair $(R, \sD)$. In Section~\ref{sec:gener-f-sign}, roughly speaking, we define the $F$-splitting number $a_{e}^{\sD}$ to be the maximal number of $F$-splittings contained in $\sD_{e}$.  The fundamental results of this article, summarized below, completely characterize the asymptotic growth of the numbers $a_{e}^{\sD}$ for $e \gg 0$ and readily imply Corollary~\ref{cor:fsplitratio} above.

\begin{theorem*}
Suppose $\sD$ is a Cartier subalgebra on a $d$-dimensional equal characteristic $p>0$ complete local domain
 $R$ with perfect residue field.  If $\Gamma_{\sD} = \{ \, e \; | \; a_{e}^{\sD} \neq 0 \, \}$, then
the limit
\[
s(R,\sD) = \lim_{e \in \Gamma_{\sD} \to \infty}
\frac{a_{e}^{\sD}}{p^{ed}}
\]
exists (Theorem \ref{thm:existence}) and is called the \emph{$F$-signature} of $(R, \sD)$.  Furthermore, $s(R, \sD)>0$ is positive if and only if
$(R, \sD)$ is $F$-regular (Theorem~\ref{thm:positivity}).
\end{theorem*}

The existence statement in the theorem above generalizes \cite{TuckerFSignatureExists} using related methods; however, it is quite distinct in that we make no appeal to Hilbert-Kunz multiplicity.  The subsequent positivity statement is a substantial generalization of the main result of \cite{AberbachLeuschke}.

Section \ref{sec:applications} is devoted to various applications and
examples of the theory of $F$-signature of pairs.  In addition to the
$F$-splitting ratio mentioned above, we also describe effective
Fedder-type methods for computing $F$-signature. This is done via the
formalism of $F$-graded systems following
\cite{BlickleTestIdealsViaAlgebras}.  As before, the crucial point is
that Cartier subalgebras facilitate changing settings from one ring to
another.  In particular, when $R$ is presented as the quotient of a
regular local ring $S$, we have the ability to translate difficult questions on $R$ to computable statements on $S$ (see Theorem \ref{thm:FedderComputationForFGradedSystems}).
As an immediate consequence one obtains the following, which recovers and extends a result of Aberbach and Enescu \cite[Discussion after Theorem~4.2]{AberbachEnescuStructureOfFPure} (see also \cite[Proposition 3.1]{EnescuYaoLowerSemicontinuity}).

\renewcommand{\n}{\mathfrak{n}}
\renewcommand{\a}{\mathfrak{a}}

\begin{theorem*}
Let $S$ be an $n$-dimensional complete regular local ring of equal characteristic $p > 0$ with maximal ideal $\n$ and perfect residue field.

\smallskip
  \begin{itemize}
  \item[(i)] (Corollary~\ref{cor:fedcritforsplitnumbersandsignature}) \textnormal{\cite[Discussion after Theorem 4.2]{AberbachEnescuStructureOfFPure}} If $R = S/J$ for some non-zero ideal $J$ and $d = \dim(R)$, then
\[
s(R) =\displaystyle \lim_{e \to \infty} \frac{1}{p^{ed}}\length_{S}\left(  S \big/ \left(
    \n^{[p^{e}]}:(J^{[p^{e}]}:J)\right) \right) \, \, .
\]
\item[(ii)] (Corollary~ \ref{cor:attightclosuretest}) If $\a$ is a non-zero ideal and $t \in \R_{>0}$, then all ideals of $S$ are $\a^{t}$-tightly closed (see \cite{HaraYoshidaGeneralizationOfTightClosure}) if and only if
\[
s(S,\a^{t}) = \lim_{e \to \infty} \frac{1}{p^{ne}}\left( S \big/ \left( \n^{[p^{e}]} : \a^{\lceil t(p^{e}-1) \rceil}\right)\right) > 0 \, \, .
\]
  \end{itemize}
\end{theorem*}

Finally, we conclude this article by computing $F$-signature in a number of interesting examples.  In particular, we exhibit an $F$-pure local ring $R$ with $F$-splitting prime $P$ such that $r_F(R) \neq s(R/P)$ (see Sections \ref{sec:whitneys-umbrella} and \ref{sec:ramified-cover-cusp}).  We also give a toric formula for the $F$-signature of monomial ideal pairs (see Section~\ref{sec:monomial-ideals}).
\vskip 6pt
\noindent \emph{Acknowledgements:}  The authors would like to thank
Craig Huneke for valuable and inspiring conversations, as well as
Florian Enescu for his comments on a previous draft of this article.  We would also like to thank the referee for numerous helpful comments.  Finally, the authors worked on this paper while visiting the Johannes Gutenberg-Universit\"at Mainz during the summers of 2010 and 2011.  These visits were funded by the SFB/TRR45 \emph{Periods, moduli, and the arithmetic of algebraic varieties}.

\section{Preliminaries}
\label{sec:preliminaries}


Unless explicitly stated otherwise, all rings throughout this paper
are assumed to be commutative with
a unit,
Noetherian, and to have
prime characteristic $p > 0$.  A
local ring is denoted by the tuple $(R, \m)$ where $\m$ is the unique maximal
ideal of the ring $R$, and if needed we denote by
$k = R/\m$ is the corresponding residue field.  The Frobenius
or $p$-th power endomorphism $F \: R \to R$ is defined by $r \mapsto
r^{p}$ for all $r \in R$.  Similarly, for $e \in \N$, we have $F^{e}
\: R \to R$ given by $r \mapsto r^{p^{e}}$.

Let $M$ be an $R$-module.  For any $e \in \N$, viewing $M$ as an
$R$-module via restriction of scalars for $F^{e}$, yields an
$R$-module we  denote by
$F^{e}_{*}M$.  Thus, $F^{e}_{*} M$ agrees with $M$ as an
Abelian group, and if $m \in M$ we set $F^{e}_{*}m$ to be the
corresponding element of $F^{e}_{*}M$.  Furthermore, for $r \in R$ it
follows that $r
(F^{e}_{*}m) = F^{e}_{*}(r^{p^{e}}  m)$.  Note that $F^{e}_{*}R$
inherits the structure of a ring abstractly isomorphic to $R$, and $F^{e}_{*}M$ is naturally an
$F^{e}_{*}R$-module for any $R$-module $M$.  It is critical to observe that $F^e_*$ is in fact a functor, and so maps between modules can also be pushed forward.

We have that $F^{e}_{*}R$
is an $R$-algebra via the homomorphism of $R$-modules $F^{e} \: R \to
F^{e}_{*}R$ given by $r
\mapsto F^{e}_{*} r^{p^{e}}$ for $r \in R$, which is but another
perspective on the $e$-th iterate of Frobenius.
In case $R$ is reduced, we may
identify $F^{e}_{*}R$ with the $R$-module $R^{1/p^{e}}$ of $p^{e}$-th
roots of $R$ by associating  $F^{e}_{*}r$ and $r^{1/p^{e}}$; the $e$-iterated Frobenius homomorphism
now takes on the
guise of the natural inclusion $R \subseteq R^{1/p^{e}}$.  Each point
of view has certain advantages, and we will
switch between them as the situation warrants throughout.

\begin{definition}
  Suppose $(R, \m)$ is a local ring of characteristic $p > 0$.  We
  say $R$ is \mbox{\emph{$F$-finite}} if $F_{*}R$ is finitely generated as an
  $R$-module, from which it follows that $F^{e}_{*}R$ is finitely
  generated for all $e \in \N$.  In this case, we set $\alpha(R) = \log_{p}[k:k^{p}]$.
\end{definition}

\noindent
Note that any local ring which is essentially of finite type over a
perfect field is \mbox{$F$-finite}, as well as a complete local
ring with perfect (or even $F$-finite) residue field.  We shall primarily
restrict our attention to $F$-finite rings throughout this article.

Denote by $\length_{R}(M)$ the length of a finitely generated
Artinian $R$-module $M$.  If $R$ is \mbox{$F$-finite} and $e \in \N$, it is easy to
see that
\[
\length_{F^{e}_{*}R}(F^{e}_{*}M) = \length_{R}(M)   \qquad
\length_{R}(F^{e}_{*}M) = p^{e\alpha(R)}\length_{R}(M)
\]
by using that $F^{e}_{*}( \blank )$ is an exact functor and
$[(F^{e}_{*}k \simeq k^{1/p^{e}}):k] = p^{e\alpha(R)}$. If $I =
\langle x_{1}, \ldots, x_{t} \rangle$ is an ideal in $R$, then the
corresponding ideal $I^{[p^{e}]} = \langle x_{1}^{p^{e}}, \ldots,
x_{t}^{p^{e}} \rangle$ satisfies $IF^{e}_{*}R =
F^{e}_{*}(I^{[p^{e}]})$ and is independent of the choice of generators of $I$.  In particular, if $I$ is $\m$-primary, we have
\[
\length_{R}\left( (R/I) \tensor_{R} F^{e}_{*}R \right) = \length_{R}\left( F^{e}_{*}\left(R/I^{[p^{e}]} \right) \right) = p^{e\alpha(R)} \length_{R}(R/I^{[p^{e}]}) \, \, .
\]

The following
results of Kunz, also treated in the appendix to
\cite{MatsumuraCommutativeAlgebra}, show how the Frobenius
endomorphism can be used to detect regularity.

\begin{theorem} \cite{KunzCharacterizationsOfRegularLocalRings,KunzOnNoetherianRingsOfCharP}
\label{thm:kunz}  Let $(R, \m)$ be an $F$-finite Noetherian local ring of dimension $d$ and characteristic $p >
  0$. Then $R$ is excellent and $\alpha(R_{P})
    = \alpha(R_{Q}) + \dim(R_{Q} / PR_{Q})$ for any two prime ideals
    $P \subseteq Q$ of $R$. Furthermore, $R$ is regular if and only if $F^{e}_{*}R$ is a free $R$-module of rank
    $p^{e(d+\alpha(R))}$ for some $e \in \Z_{>0}$, in which case
    $F^{e}_{*}R$ is a free  $R$-module of rank
    $p^{e(d+\alpha(R))}$ for all $e \in \Z_{>0}$.
\end{theorem}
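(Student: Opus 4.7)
The plan is to handle the three assertions separately. Excellence of $F$-finite rings is the deepest input and I would cite Kunz's own treatment \cite{KunzOnNoetherianRingsOfCharP}: $F$-finiteness is preserved under localization and $\m$-adic completion, and lifting a $p$-basis of the residue field to the completion exhibits $\hat R$ as a suitable power-series extension, from which geometric regularity of the formal fibers can be extracted. For the formula $\alpha(R_P) = \alpha(R_Q) + \dim(R_Q/PR_Q)$, I would reduce to a computation of residue-field extensions: applied to the $F$-finite local domain $R_Q/PR_Q$, whose fraction field is $\kappa(P)$ and whose residue field is $\kappa(Q)$, standard $p$-basis arguments (passing through the completion and combining lifting of a $p$-basis with a system of parameters) show that a $p$-basis of $\kappa(P)$ is obtained from one of $\kappa(Q)$ by adjoining exactly $\dim(R_Q/PR_Q)$ additional elements, and the formula follows by taking $\log_p$.

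For the regularity criterion, the forward direction is the easier one. Assume $R$ is regular with regular system of parameters $x_1,\ldots,x_d$. Then $x_1^{p^e},\ldots,x_d^{p^e}$ remains a regular sequence, so a standard Koszul-complex argument shows $F^e_*R$ is flat over $R$; being finitely generated over a Noetherian local ring, it is in fact free. To compute the rank, base change to the residue field:
\[
F^e_*R \otimes_R k \;\cong\; F^e_*\bigl(R/\m^{[p^e]}\bigr),
\]
whose $k$-length equals $p^{e\alpha(R)} \cdot \length_R(R/\m^{[p^e]}) = p^{e\alpha(R)} \cdot p^{ed} = p^{e(d+\alpha(R))}$, using that $\m^{[p^e]}$ is generated by the regular sequence $x_1^{p^e},\ldots,x_d^{p^e}$. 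The same conclusion applies to every $e \in \Z_{>0}$, as the argument is uniform in $e$.

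The converse is the hard part: if $F^e_*R$ is free over $R$ (of any rank) for some $e > 0$, then $R$ is regular. This is Kunz's celebrated flatness criterion \cite{KunzCharacterizationsOfRegularLocalRings}. The approach I would follow is the Peskine--Szpiro / Herzog argument by contradiction: if $R$ were not regular, the residue field $k = R/\m$ would have infinite projective dimension over $R$, so any minimal free resolution $G_\bullet \to k$ is unbounded; flatness of $F^e$ would guarantee that $F^e_*G_\bullet \to F^e_*k$ remains exact, hence is itself a resolution, and a Tor computation leveraging the freeness of $F^e_*R$ together with length estimates on the Betti numbers of $k$ then produces an incompatibility. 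This flatness criterion, rather than the direct rank or $\alpha$ computations, is the main technical obstacle.
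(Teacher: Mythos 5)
The paper does not prove this theorem: it is quoted verbatim with citations to Kunz's two papers and to the appendix of Matsumura's \emph{Commutative Algebra}, and no argument is given in the text. So there is no internal proof to compare against; what I can assess is whether your outline is consistent with the cited literature, and it is.

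Your decomposition into three claims (excellence, the $\alpha$-formula, the regularity criterion) mirrors the logical structure of the statement. For the forward direction of the regularity criterion your Koszul argument is correct and worth spelling out: $\Tor_i^R(F^e_*R, k)$ is computed by $K_\bullet(x_1,\ldots,x_d; R)\otimes_R F^e_*R \cong F^e_*\bigl(K_\bullet(x_1^{p^e},\ldots,x_d^{p^e}; R)\bigr)$, which is acyclic in positive degrees because the $p^e$-th powers remain a regular sequence; together with finite generation over the Noetherian local $R$ this gives freeness, and the rank computation by base change to $k$ is exactly as you wrote. Your reduction of the $\alpha$-formula to the one-ring statement for $R_Q/PR_Q$ (whose generic point gives $\kappa(P)$ and closed point gives $\kappa(Q)$) is the standard move, though the $p$-basis bookkeeping through the completion is where the real work sits and is only gestured at. For the converse you are right that Kunz's flatness-implies-regularity theorem is the substantive ingredient, and the Herzog/Peskine--Szpiro route via pushing forward a minimal free resolution of $k$ is one accepted line of proof; I would only caution that the Betti-number comparison is more delicate than "an incompatibility," since $F^e_*$ of a minimal resolution need not remain minimal, and the actual argument requires either the Buchsbaum--Eisenbud acyclicity criterion or Kunz's own length comparison $\length_R(R/\m^{[p^e]}) \geq p^{ed}$ with equality iff regular. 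None of this is wrong, just compressed; since the paper itself treats the result as a black box, this level of sketch is appropriate.
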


\noindent
More generally, as suggested by Theorem~\ref{thm:kunz} above, one expects the $R$-module structure of $F^{e}_{*}R$ to reflect upon the singularities of $R$.  This observation is an essential part of the underlying motivation for the theory developed in this article.

\subsection{The Cartier algebra}
\label{sec:pminuselinearmaps}

In this section, we recall a general framework for the investigation
of singularities in positive characteristic commutative algebra.  It
is in the context of this framework that we proceed to develop a
theory of \mbox{$F$-signature} of pairs.

\begin{definition}
Suppose $R$ is an $F$-finite ring with prime characteristic $p > 0$. A \mbox{\emph{$p^{-e}$-linear}}  \emph{map} between $R$-modules $M$ and $N$ is an additive map $\phi \: M \to N$ such that $\phi(r^{p^e} x) = r \phi(x)$ for all $x \in M$ and $r \in R$.  Equivalently, it is simply an $R$-linear map $F^e_*M \rightarrow N$, so that the set of $p^{-e}$-linear maps from $M$ to $N$ is given by $\Hom_R(F^e_*M,N)$.  If $R$ is reduced and $M$ is a submodule of the total quotient ring of $R$, this set is also identified with $\Hom_R(M^{1/p^e},N)$ via the $R$-module isomorphism $M^{1/p^e} \to F^e_*M$ mapping $a^{1/p^{e}} \mapsto F^{e}_{*}a$.
\end{definition}

For $e \in \Z_{\geq 0}$, let $\sC^{R}_{e} = \Hom_{R}(F^{e}_{*}R, R)$ denote the
set of all $p^{-e}$-linear maps.  Consider now the Abelian group
\[
\sC^{R} = \bigoplus_{e \geq 0} \sC^{R}_{e} = \bigoplus_{e \geq 0}
\Hom_{R}(F^{e}_{*}R, R)
\]
where we have $\sC_{0}^{R} = \Hom_{R}(R,R) = R$.  In fact, $\sC^{R}$
carries the structure of a non-commutative $\N$-graded ring where the multiplication of homogenous elements is given by composition of additive maps. In the above notation this means that, if $\phi_{1} \in \sC^{R}_{e_{1}}$ and $\phi_{2} \in \sC^{R}_{e_{2}}$ are homogeneous elements, then the multiplication in $\sC^{R}$ is defined by
\[
\phi_{1} \cdot \phi_{2} := \left( (\phi_{1} \circ F^{e_{1}}_{*}
  \phi_{2}) \: F^{(e_{1}+e_{2})}_{*}R \to R \right) \in
\sC^{R}_{(e_{1}+e_{2})} \; .
\]
It is immediately verified that $\phi_1 \cdot \phi_2$ indeed corresponds to the composition $\phi_1 \circ \phi_2: R \to R$ if both are viewed as additive maps on $R$. In particular, the $n$-fold product of $\phi \in \sC^{R}_{e}$ is denoted by $\phi^n \in \sC^{R}_{ne}$ and corresponds simply to the $n$-fold composition of the additive map $\phi$ with itself.
The ring 
 $\sC^{R}$ is called either the
\emph{total ring of $p^{-e}$-linear maps on $R$} or simply the
\emph{(total) Cartier algebra on $R$}.  

From the fact that there is a natural ring inclusion $R  = \sC^R_0 \into \sC^R$ one should \emph{not} be tempted to conclude that $\sC^R$ is an $R$-algebra in the classical sense: It is only an $\mathbb{F}_p$-algebra since $R$ is generally not central in $\sC^R$. This remark also applies to the following definition.

\begin{definition} \cite{SchwedeTestIdealsInNonQGor}
  A \emph{ring of $p^{-e}$-linear maps} on an $F$-finite
  ring $R$ or simply a \emph{Cartier
    subalgebra on $R$} is
  a graded $\mathbb{F}_{p}$-subalgebra $\sD = \oplus_{e \geq 0} \sD_{e}$ of
  $\sC^{R}$ satisfying $\sD_{0} = \sC_{0} = \Hom_{R}(R,R) = R$ and
  $\sD_{e} \neq 0$ for some $e \in \Z_{>0}$.
\end{definition}

In the context of this paper, this theory was introduced in a paper of the second author, \cite{SchwedeTestIdealsInNonQGor}.  Further refinements and generalizations to arbitrary modules were developed in a paper of the first author \cite{BlickleTestIdealsViaAlgebras}.  This theory also has roots in \cite{LyubeznikSmithCommutationOfTestIdealWithLocalization} where the Matlis dual of the complete Cartier algebra was studied.  For a brief survey of these rings, see \cite[Section 7]{SchwedeTuckerTestIdealSurvey}.

The advantage of using the formalism of algebras of $p^{-e}$-linear
maps is that they
allow one to treat uniformly several common settings, such as:

\begin{enumerate}
\item The study of pairs $(R, \Delta)$ where $R$ is normal and
  $\Delta$ is an effective $\R$-divisor on $\Spec R$.
\item The study of pairs $(R,\ba^{t})$ where $t \in \R_{\geq 0}$
  and $\ba \subseteq R$ is an
  ideal not contained in the union of the minimal primes of $R$.
\item Triples $(R, \Delta, \ba^{t})$ combining the pairs in (a) and
  (b) above.
\end{enumerate}

\noindent
See Section~\ref{sec:divisor-pairs} for the precise construction of
the Cartier subalgebras appropriate for each of the above
variants.

\begin{example}
Let $R$ be an $F$-finite ring and $\phi \in
\sC^{R}_{e} = \Hom_{R}(F^{e}_{*}R,R)$ a non-zero $p^{-e}$-linear map.
The Cartier subalgebra generated by $\sC^{R}_{0} = R$ and
$\phi \in \sC^{R}_{e}$, \textit{i.e.} the smallest subring of
$\sC^{R}$ containing both $R$ and $\phi$, will be denoted by $\sC^{\phi}$.
Since $(r \cdot \phi)(\blank) = \phi(F^{e}_{*}r^{p^{e}} \cdot \blank)$, it is
easy to see
\[
\sC^{\phi}_{e} = \{ \, \phi(F^{e}_{*}r \cdot \blank )\; | \; r \in R \,\}
\]
so that $\phi$ generates $\sC_{e}^{\phi}$ as an
$F^{e}_{*}R$-module.
\end{example}

\begin{example}
\label{ex:gorensteincartieralgebra}
\cite[Remark 4.4]{SchwedeTestIdealsInNonQGor} (\cf \cite[Corollary 3.9]{SchwedeFAdjunction} and \cite{LyubeznikSmithCommutationOfTestIdealWithLocalization})
If $R$ is a Gorenstein local ring with canonical module $\omega_{R}
\simeq R$, then we have that $\Hom_{R}(F^{e}_{*}R,R) \simeq
\Hom_{R}(F^{e}_{*}R, \omega_{R}) \simeq F^{e}_{*} \omega_{R} \simeq
F^{e}_{*}R$ can be generated as an $F^{e}_{*}R$-module by a single
homomorphism $\Phi \in \sC^{R}_{1}$ (in fact, one may take $\Phi$ to
be the canonical dual of Frobenius).  Furthermore, in this case, one can show
$\sC^{\Phi} = \sC^{R}$.
While here the (total) Cartier algebra is finitely generated (over $\sC_{0}^{R} = R$), in general $\sC^R$ need not be finitely generated (\textit{e.g.} see \cite{KatzmanANonFinitelyGeneratedAlgebra, AlvarezmontanerBoixZarzuelaAlgebrasOfStanleyReisnerRings}).
\end{example}

\subsection{$F$-singularities}
\label{sec:test-ideals}

\begin{definition} \cite{HochsterRobertsFrobeniusLocalCohomology, HochsterHunekeTightClosureAndStrongFRegularity, SchwedeTestIdealsInNonQGor}
  Suppose $\sD$ is a Cartier subalgebra on an $F$-finite local ring $R$.
  \begin{enumerate}
  \item
 We say that $(R,\sD)$ is \emph{(sharply) $F$-pure} if there
  is a surjective homomorphism $\phi \in \sD_{e}$ for some $e \in
  \Z_{>0}$. The ring $R$ is called $F$-pure when $(R,\sC^{R})$ is $F$-pure.
  \item
  We say that $(R,\sD)$ is \emph{(strongly) $F$-regular}
  if it satisfies the following property: for all $c \in R$ not
  contained in any minimal prime, there
  exists an $e \in \N$ and $\phi \in \sD_{e}$ such that
  $\phi(F^{e}_{*}c) = 1$.
The ring $R$ is called (strongly) $F$-regular when $(R, \sC^{R})$ is $F$-regular.
\end{enumerate}
\end{definition}

\begin{remark}
  The reader is hereby warned that, throughout this article, our terminology differs
  in some instances from that which has been used historically.  Explicitly, what we call $F$-pure has historically been called \emph{sharply $F$-pure} and what we call \emph{$F$-regular} has historically been called \emph{strongly $F$-regular}. Therefore, by adding the qualifiers
  \emph{sharply} and \emph{strongly} to $F$-pure and $F$-regular,
  respectively, this
  discrepancy (which, in any event, does not occur in a number of
  cases) is easily rectified.  Likewise, whenever the test ideal appears in this paper, we really mean the \emph{big test ideal}, which is also known as the \emph{non-finitistic test ideal}.
\end{remark}

\begin{proposition}
  Suppose $\sD$ is a Cartier subalgebra on an $F$-finite local ring $R$.
\begin{enumerate}
  \item If $(R,\sD)$ is $F$-pure (respectively, $F$-regular) and $\sD
    \subseteq \sD'$ for another Cartier subalgebra $\sD'$, then
    $(R,\sD')$ is also $F$-pure (respectively, $F$-regular).
    \item $(R,\sD)$ is $F$-pure (respectively,
      $F$-regular) if and only if there exists some $e \in \Z_{>0}$ and $0 \neq
      \phi \in
      \sD_{e}$ such that $(R,\phi)$ is $F$-pure (respectively,
      $F$-regular).
  \item If $(R,\sD)$ is $F$-pure, then $R$ is reduced and weakly
    normal.
  \item If $(R,\sD)$ is $F$-regular, then $R$ is a
    Cohen-Macaulay normal domain.
  \end{enumerate}
\end{proposition}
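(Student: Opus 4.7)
The plan is to treat the four parts in increasing order of difficulty.

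Part (a) is tautological: any $\phi \in \sD_e$ witnessing $F$-purity or $F$-regularity of $(R,\sD)$ automatically witnesses the same property for $(R,\sD')$ once $\sD \subseteq \sD'$.

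For part (b), the backward implications are instances of (a) applied to $\sC^\phi \subseteq \sD$. For the forward direction in the $F$-pure case, if $\phi \in \sD_e$ is surjective, then $\phi \in \sC^\phi_e$ is again surjective, so $(R,\phi)$ is already $F$-pure. The $F$-regular forward direction is where I expect the main obstacle to lie. The strategy is: by (a), the ring $R = (R,\sC^R)$ is strongly $F$-regular in the classical Hochster--Huneke sense, so a big test element $c_0 \in R$ exists. Applying the $F$-regularity of $(R,\sD)$ to $c_0$ produces some $\phi \in \sD_e$ with $\phi(F^e_* c_0) = 1$. To show that this one $\phi$ suffices --- that is, that for every nonzero $d \in R$ there exist $N$ and $r \in R$ with $\phi^N(F^{Ne}_*(rd)) = 1$ --- it is enough (by composing once more with $\phi$, which sends $c_0$ to $1$) to find $N$ and $r$ with $\phi^N(F^{Ne}_*(rd)) = c_0$. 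This is the delicate point: it requires using the test element property of $c_0$ to route the classical splittings coming from all of $\sC^R$ through the much more restricted chain of iterates of $\phi$ that populate $\sC^\phi$.

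For part (c), fix a surjective $\phi \in \sD_e$ and $y_0 \in R$ with $\phi(F^e_* y_0) = 1$; replacing $\phi$ by a suitable iterate $\phi^N$ (still surjective and in $\sD$) we may take $e$ as large as we wish. For reducedness, if $r^n = 0$ and $p^e \geq n$, then
\[
0 = \phi\bigl(F^e_*(y_0\, r^{p^e})\bigr) = r\,\phi(F^e_* y_0) = r,
\]
so $R$ is reduced. For weak normality, $\phi$ extends uniquely to a $p^{-e}$-linear map $\widetilde{\phi}\colon F^e_* K(R) \to K(R)$ on the total quotient ring $K(R)$ by localizing at the non-zero-divisors of $R$. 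If $y \in K(R)$ satisfies $y^{p^e} \in R$, then $y_0\, y^{p^e} \in R$ and
\[
\phi\bigl(F^e_*(y_0\, y^{p^e})\bigr) = y\,\widetilde{\phi}(F^e_* y_0) = y
\]
exhibits $y$ as an element of $R$.

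Finally, part (d) follows by applying (a) to the inclusion $\sD \subseteq \sC^R$ to conclude that $(R,\sC^R)$ is $F$-regular, i.e., $R$ is classically strongly $F$-regular. The Cohen--Macaulay and normal domain conclusions are then the standard theorems of Hochster--Huneke on strongly $F$-regular rings.
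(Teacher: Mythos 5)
Your treatment of parts (a), (c), and (d) is correct. The paper disposes of (a) as immediate and refers (c) and (d) to standard tight-closure literature, so your direct argument for (c) --- killing nilpotents by evaluating a large Frobenius splitting on $y_0 r^{p^e}$, and exhibiting $F$-closedness in $K(R)$ via the localized splitting $\widetilde\phi$ --- is actually more self-contained than the paper's own proof. Part (d) agrees with the paper: pass through $\sC^R$ and invoke Hochster--Huneke.

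The gap is in the forward, $F$-regular direction of (b), and you have in fact located it yourself without closing it. You propose: take a classical big test element $c_0$, use $F$-regularity of $(R,\sD)$ to find $\phi\in\sD_e$ with $\phi(F^e_*c_0)=1$, and then argue that this single $\phi$ already witnesses $F$-regularity of $(R,\sC^\phi)$. But ``$c_0$ is a classical test element'' by itself does not give you what that reduction needs. What one actually needs is an element $g$ with the property that $g\cdot\sC^R_{me}\subseteq\sC^\phi_{me}$ for all $m$, i.e.\ $g$ uniformly pushes arbitrary $p^{-me}$-linear maps into the $F^{me}_*R$-orbit of $\phi^m$; this is precisely Lemma~\ref{lem:directsumstuff}~(c) later in the paper, and the $g$ produced there \emph{depends on $\phi$}. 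Your $c_0$ is chosen first, independently of $\phi$, and your $\phi$ is then chosen to split $c_0$ --- so there is a circularity you do not address: nothing guarantees that the $g$ adapted to this $\phi$ is itself split by $\phi$, which is what the argument ultimately requires. Resolving this (e.g.\ by composing with a surjective element of $\sD$ to increase degree, applying $F$-regularity of $(R,\sD)$ to the adapted $g$, and iterating $\phi$) is a genuine step, not a routine bookkeeping one; it is exactly the content the paper defers to \cite{SchwedeSmithLogFanoVsGloballyFRegular, SchwedeTestIdealsInNonQGor}. Your writeup flags this as ``the delicate point'' but does not carry it out, so the proposal does not constitute a proof of (b).
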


\begin{proof}
  Statement {(a)} follows immediately from the definitions
  above.  In particular, since $\sD \subseteq \sC^{R}$, then $(R,
  \sD)$ being $F$-pure or $F$-regular implies the same property for
  $R$, respectively.  As a result, {(c)} and {(d)}
  follow from well-known results in the theory of tight closure
  \cite{HunekeTightClosureBook}.  For {(b)}, see \cite{SchwedeSmithLogFanoVsGloballyFRegular, SchwedeTestIdealsInNonQGor}.
\end{proof}

Suppose $\sD$ is a Cartier subalgebra on an $F$-finite ring $R$.  Viewing the homogenous elements of $\sD$ as additive maps on $R$ endows $R$ with the structure of a $\sD$-module.  Since $\sD_{0} = \Hom_{R}(R,R) = R$, every $\sD$-submodule of $R$ is necessarily also an $R$-submodule.
\begin{definition}
\label{def:compatideal}
Suppose $\sD$ is a Cartier subalgebra on an $F$-finite ring $R$.  An ideal $J \subseteq R$ is said to be \emph{$\sD$-compatible} if it is a $\sD$-submodule of $R$, so that $R/J$ inherits the structure of a $\sD$-module from $R$.  Equivalently, for all $e \in \Z_{\geq 0}$ and $\phi \in \sD_{e}$, we have that $\phi(F^{e}_{*}J) \subseteq J$ and thus $\phi$ induces a $p^{-e}$-linear map $\phi_{J}$ on $R/J$ fitting into the commutative diagram
\[
\xymatrix{
F^{e}_{*}R \ar[r]^{\phi} \ar@{->>}[d] & R \ar@{->>}[d] \\
F^{e}_{*}R/J \ar[r]_{\phi_{J}} & R/J \; .
}
\]
In particular, setting $(\sD_{J})_{e} = \{ \, \phi_{J} \; | \; \phi \in \sD_{e} \, \}$ for all $e \in \Z_{\geq 0}$, there is an induced Cartier subalgebra $\sD_{J} = \oplus_{e\geq 0} (\sD_{J})_{e}$ on $R/J$.
\end{definition}

\begin{theorem}
\label{thm:testidealsexist}
\cite{HochsterHunekeTC1}\cite[Proposition 3.23]{SchwedeTestIdealsInNonQGor}
For any Cartier subalgebra $\sD$ on an \mbox{$F$-finite} local domain $R$, there is a unique
smallest non-zero $\sD$-compatible ideal $\tau(R, \sD)$.  The ideal
$\tau(R, \sD)$ is called the \emph{(big) test ideal} of $(R, \sD)$.  Furthermore, $\tau(R, \sD) = R$ if and only if $(R, \sD)$ is (strongly) $F$-regular.
\end{theorem}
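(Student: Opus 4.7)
The plan is to construct $\tau(R,\sD)$ as the $\sD$-ideal generated by a suitable ``test element,'' and to deduce the $F$-regularity characterization from the definition by exploiting locality.

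For each $0 \neq d \in R$, the ideal
\[
\tau(R,\sD,d) \;:=\; \sum_{e \geq 0,\ \phi \in \sD_e} R \cdot \phi(F^e_* d)
\]
is readily checked to be the smallest $\sD$-compatible ideal containing $d$: $R$-closure is automatic, while $\sD$-compatibility follows from the identity $\psi(F^{e'}_*(r\phi(F^e_* d))) = ((\psi \cdot r) \cdot \phi)(F^{e+e'}_* d)$, with $(\psi \cdot r) \cdot \phi \in \sD_{e+e'}$. Hence the existence of a unique smallest nonzero $\sD$-compatible ideal reduces to producing a nonzero $c \in R$ with $c \in \tau(R,\sD,d)$ for every $0 \neq d$: one then sets $\tau(R,\sD) := \tau(R,\sD,c)$, and any nonzero $\sD$-compatible $J$ contains some $0 \neq d$, hence $\tau(R,\sD,d) \ni c$, hence $\tau(R,\sD,c) \subseteq J$.

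For the construction of $c$: choose $c_0 \neq 0$ with $R_{c_0}$ regular, which exists because $R$ is an $F$-finite domain. Over $R_{c_0}$ the modules $F^e_* R_{c_0}$ are free by Kunz, and combined with the existence of some $0 \neq \phi \in \sD_e$ with $e > 0$ this drives a Hochster-Huneke style argument (as in \cite{HochsterHunekeTC1, SchwedeTestIdealsInNonQGor}): controlling the powers of $c_0$ introduced when clearing denominators after inverting $c_0$, and invoking the finite generation of $F^e_* R$, one obtains a single exponent $N$ with $c_0^N \in \tau(R,\sD,d)$ for every $0 \neq d \in R$; set $c := c_0^N$. The only nonclassical input is that $\sD_e \neq 0$ for some $e > 0$, which holds by definition, and this step is the principal obstacle of the proof: producing \emph{some} nonzero element of each $\tau(R,\sD,d)$ is easy, but fixing a single $c$ lying in all of them requires the denominator bounds furnished by $F$-finiteness.

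For the $F$-regularity characterization: if $(R,\sD)$ is $F$-regular then for each $0 \neq c$ some $\phi \in \sD_e$ satisfies $\phi(F^e_* c) = 1$, so $1 \in \tau(R,\sD,c)$ and hence $\tau(R,\sD) = R$. Conversely, if $\tau(R,\sD) = R$ then $\tau(R,\sD,c) = R$ for every $0 \neq c$, giving a finite sum $1 = \sum_i t_i \phi_i(F^{e_i}_* c)$ with $t_i \in R$, $\phi_i \in \sD_{e_i}$. Since $R$ is local and $1 \notin \m$, at least one summand $u := t_j \phi_j(F^{e_j}_* c)$ is a unit; then the Cartier-algebra element $u^{-1}t_j \cdot \phi_j \in \sD_{e_j}$ sends $F^{e_j}_* c$ to $1$, verifying $F$-regularity directly from the definition.
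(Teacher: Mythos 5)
Your proposal follows the same route the paper indicates: the paper explicitly declines to give a full proof, noting only that ``the key point is to show there is a single non-zero element $b \in R$ which is contained in every $\sD$-compatible ideal,'' and deferring to \cite{HochsterHunekeTC1} and \cite{SchwedeTestIdealsInNonQGor} for the Hochster--Huneke test-element argument that produces it. Your reduction via the ideals $\tau(R,\sD,d) = \sum_{e,\phi} R\cdot\phi(F^e_*d)$ is the standard way to make the paper's remark precise, and your verification that these are the minimal $\sD$-compatible ideals containing $d$ (using $(\psi\cdot r)\cdot\phi \in \sD_{e+e'}$, which relies on $\sD_0 = R$ and closure under multiplication) is correct. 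The $F$-regularity equivalence is also handled correctly: in the forward direction, $F$-regularity directly gives $1 \in \tau(R,\sD,d)$ for every $d \neq 0$; in the reverse, your use of locality to extract a unit summand from $1 = \sum_i t_i\phi_i(F^{e_i}_*c)$ and then absorb $u^{-1}t_j$ into $\sD_0$ is exactly right (and uses that $R$ is a domain so ``not in a minimal prime'' means nonzero). You leave the hard analytic step --- producing a uniform $N$ with $c_0^N \in \tau(R,\sD,d)$ for all $d \neq 0$ --- as a citation, but so does the paper; that is where the genuine content lives, and your remark that the essential new hypothesis is $\sD_e \neq 0$ for some $e > 0$ correctly identifies the only departure from the classical case.
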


\begin{proof}
We have omitted a proof as we will not make use of the techniques in what follows, and refer the reader to the references listed above. The key point is to show there is a single non-zero element $b \in R$ which is contained in every $\sD$-compatible ideal; the argument is essentially the same as that which shows test elements exist in $F$-finite reduced rings. See \cite{SchwedeTuckerTestIdealSurvey} for a recent survey of test ideals from the point of view taken in this article.
\end{proof}

\begin{proposition}
\label{prop:splittingprimesexist}
\cite{AberbachEnescuStructureOfFPure,SchwedeCentersOfFPurity}
  Suppose $\sD$ is a Cartier subalgebra on an $F$-finite local ring $(R,\m,k)$.  The ideal
  \begin{equation*}
P_{\sD} = \{ \; r \in R \; | \;  \phi(F^{e}_{*}r) \in \m \mbox{ for all } e > 0
\mbox{ and all } \phi \in \sD_{e} \; \}\label{eq:splittingprime}
\end{equation*}
is a proper ideal if and only if $(R, \sD)$ is $F$-pure, in which case
it is a prime ideal and the largest proper
$\sD$-compatible ideal.  The ideal $P_{\sD}$ is called the
  \emph{$F$-splitting prime of $(R,\sD)$}.
\end{proposition}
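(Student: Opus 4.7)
The plan is to proceed through five steps: (1) verify that $P_\sD$ is an ideal; (2) show that $P_\sD$ is proper if and only if $(R,\sD)$ is $F$-pure; (3) show that $P_\sD$ is $\sD$-compatible; (4) show that $P_\sD$ contains every proper $\sD$-compatible ideal of $R$; and (5) conclude that $P_\sD$ is prime. Steps (1)--(4) flow directly from the definitions together with the multiplicative structure of $\sD$, so the main obstacle is the primality in step (5).

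For step (1), the nontrivial point is closure under $R$-multiplication: precomposition of $\phi \in \sD_e$ with multiplication by $a \in R$ corresponds to the product $\phi \cdot a \in \sD_e \cdot \sD_0 \subseteq \sD_e$, so $\phi(F^e_*(ar)) = (\phi \cdot a)(F^e_* r) \in \m$ whenever $r \in P_\sD$. For step (2), if $1 \notin P_\sD$ then some $\phi \in \sD_e$ sends $F^e_* 1$ to a unit $u$, whence $u^{-1}\phi \in \sD_e$ is a splitting and $(R, \sD)$ is $F$-pure; the converse is similar, after precomposing a surjective $\phi \in \sD_e$ by multiplication by an element $r$ with $\phi(F^e_* r) = 1$. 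Step (3) is handled by the identity $\psi(F^{e'}_*(\phi(F^e_* r))) = (\psi \cdot \phi)(F^{e+e'}_* r)$, which forces the left side into $\m$ when $\psi \cdot \phi \in \sD_{e+e'}$ and $r \in P_\sD$. Step (4) is immediate: a proper $\sD$-compatible ideal $J$ lies in $\m$, so for $r \in J$ and $\phi \in \sD_e$ we have $\phi(F^e_* r) \in J \subseteq \m$, meaning $r \in P_\sD$.

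The substantive step is primality, which I would argue by contradiction. Suppose $xy \in P_\sD$ but $x \notin P_\sD$, and let $M$ be the smallest $\sD$-compatible ideal of $R$ containing $P_\sD$ and $x$. Using that iterated $\sD$-actions collapse via composition inside $\sD$, one obtains the explicit description
\[
M = P_\sD + \sum_{e \geq 0,\, \phi \in \sD_e} R \cdot \phi(F^e_* x).
\]
Since $x \notin P_\sD$ the containment $P_\sD \subsetneq M$ is strict, so step (4) forces $M = R$. Writing $1 = p + \sum_i a_i \phi_i(F^{e_i}_* x)$ with $p \in P_\sD$ and $\phi_i \in \sD_{e_i}$, the key trick is to multiply through by $y$ and then pull $y$ past each $\phi_i$ via $p^{-e_i}$-linearity to obtain
\[
y = py + \sum_i a_i\, \phi_i\bigl(F^{e_i}_*(y^{p^{e_i}} x)\bigr).
\]
Now $y^{p^{e_i}} x = y^{p^{e_i}-1}(xy) \in P_\sD$, so the $\sD$-compatibility of $P_\sD$ from step (3) places every summand on the right into $P_\sD$; together with $py \in P_\sD$ this forces $y \in P_\sD$, completing the proof that $P_\sD$ is prime.
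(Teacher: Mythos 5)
Your proof is correct, but the argument for primality is genuinely different from the paper's. The paper's proof of primality is direct and constructive: given $c_1, c_2 \notin P_\sD$, one picks $\phi_i \in \sD_{e_i}$ (for some $e_i>0$) with $\phi_i(F^{e_i}_* c_i) = 1$, and then exhibits
\[
\psi(\blank) = (\phi_1 \cdot \phi_2)(F^{e_1+e_2}_* c_1^{p^{e_2}-1} \cdot \blank) \in \sD_{e_1+e_2}
\]
which satisfies $\psi(F^{e_1+e_2}_*(c_1 c_2)) = 1$ by a short computation (peel off $\phi_2$ first, pulling out $c_1$, then apply $\phi_1$), so $c_1 c_2 \notin P_\sD$. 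In the paper this step precedes and is independent of the compatibility and maximality claims. You instead establish $\sD$-compatibility of $P_\sD$ and its maximality among proper $\sD$-compatible ideals first, and then derive primality by contradiction from those structural facts: the smallest $\sD$-compatible ideal $M$ containing $P_\sD$ and a non-member $x$ admits the explicit description $M = P_\sD + \sum R\,\phi(F^e_* x)$, maximality forces $M = R$, and multiplying the resulting expression for $1$ by $y$ and sliding $y$ past each $\phi_i$ via $p^{-e_i}$-linearity lands everything in $P_\sD$ once $xy \in P_\sD$. Both routes are sound and lean on the same underlying fact (closure of $\sD$ under composition). The paper's approach is shorter and yields an explicit witness $\psi$; yours is more structural and foregrounds the role of $\sD$-compatible ideals, at the cost of needing compatibility, maximality, and the generated-ideal description available before primality can be addressed. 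One small point worth noting explicitly in your write-up: the definition of $P_\sD$ ranges over $e>0$ only, while your description of $M$ and your relation $1 = p + \sum a_i \phi_i(F^{e_i}_* x)$ allow $e_i = 0$; this causes no trouble because the $e_i=0$ terms reduce to $R$-multiples of $xy \in P_\sD$ after the $y$-trick, but it deserves a sentence.
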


\begin{proof}
 It is clear that $(R, \sD)$ is $F$-pure if and only if $P_{\sD}$ is a
 proper ideal.  Assuming this is the case, let us show that $P_{\sD}$
is prime.  If $c_{1}, c_{2} \in R \setminus P_{\sD}$, then for $i =
1,2$ there
exists $e_{i}>0$ and $\phi_{i} \in \sD_{e_{i}}$ with
$\phi_{i}(F^{e_{i}}_{*}c_{i}) = 1$.  But then $\psi \in
\sD_{e_{1}+e_{2}}$ given by
\[
\psi(\blank) = (\phi_{1} \cdot
\phi_{2})(F^{e_{1}+e_{2}}_{*}c_{1}^{p^{e_{2}}-1} \cdot \blank)
\]
satisfies $\psi(F^{e_{1}+e_{2}}_{*}(c_{1}c_{2})) = 1$.  Thus,
$c_{1}c_{2} \in R \setminus P_{\sD}$, and we have that $P_{\sD}$ is prime.
To see that $P_{\sD}$ is $\sD$-compatible, suppose we have $x \in P_{\sD}$ and $\phi \in \sD_{e}$ for some $e \in \Z_{>0}$.  For all $e' \in \Z_{>0}$ and $\phi' \in \sD_{e'}$, we must have $(\phi' \cdot \phi)(F^{e+e'}_{*}x) = \phi'\left(F^{e'}_{*}(\phi(F^{e}_{*}x)\right) \in \m$ since $\phi' \cdot \phi \in \sD_{e+e'}$ as $\sD$ is a Cartier subalgebra of $\sC^{R}$.  Thus, it follows that $\phi(F^{e}_{*}x) \in P_{\sD}$ and so $P_{\sD}$ is $\sD$-compatible.  Since any proper $\sD$-compatible ideal of $R$ is automatically contained in $P_{\sD}$, we see that $P_{\sD}$ is in fact the largest proper $\sD$-compatible ideal  (\textit{cf.} \cite{SchwedeCentersOfFPurity}).
\end{proof}

\begin{lemma}[{\cf \cite[Main Theorem, part (v)]{SchwedeFAdjunction}}]
\label{lem.BijectionBetweenCompatibleIdeals}
   Suppose $\sD$ is a Cartier subalgebra on an $F$-finite local ring
 $(R,\m,k)$.  If $J \subseteq R$ is a $\sD$-compatible ideal and $\sD_{J}$
is the induced Cartier subalgebra on $R/J$ as in
Definition~\ref{def:compatideal}, then the operation
\[
I \mapsto I / J
\]
induces a bijection between the $\sD$-compatible ideals $I$ of $R$ containing $J$ and the \mbox{$\sD_J$-compatible} ideals of $R/J$.

In particular, $P_{\sD}/J = P_{\sD_J}$.  Therefore, $(R/J, \sD_J)$ is $F$-regular if and only if $J$ is the $F$-splitting prime.
\end{lemma}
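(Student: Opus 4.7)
The plan is to separate the three assertions. First I would verify that the classical ideal correspondence $I \leftrightarrow I/J$ between ideals of $R$ containing $J$ and ideals of $R/J$ restricts to the claimed bijection on $\sD$-compatible ideals. This is a direct check using the commutative square in Definition~\ref{def:compatideal}: if $I \supseteq J$ is $\sD$-compatible and $\phi \in \sD_e$, then for $\bar{x} = x + J \in I/J$ the element $\phi_J(F^e_*\bar{x}) = \phi(F^e_*x) + J$ lies in $I/J$ because $\phi(F^e_*x) \in I$. Conversely, if $\bar{I} \subseteq R/J$ is $\sD_J$-compatible with preimage $I \supseteq J$, then for any $x \in I$ and $\phi \in \sD_e$, the image $\overline{\phi(F^e_*x)} = \phi_J(F^e_*\bar{x})$ lies in $\bar{I}$, so $\phi(F^e_*x) \in I$. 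The crucial point is that every element of $(\sD_J)_e$ is realized as $\phi_J$ for some $\phi \in \sD_e$ by the very construction of $\sD_J$.

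Next I would derive $P_{\sD}/J = P_{\sD_J}$ by unfolding the definition in Proposition~\ref{prop:splittingprimesexist}. Since $J$ is $\sD$-compatible, any $r \in J$ satisfies $\phi(F^e_*r) \in J \subseteq \m$ (assuming $J$ proper, otherwise the lemma is vacuous), hence $J \subseteq P_{\sD}$ and the quotient $P_{\sD}/J$ makes sense. For $r \in R$, the condition $\bar{r} \in P_{\sD_J}$ reads $\phi_J(F^e_*\bar{r}) = \overline{\phi(F^e_*r)} \in \m/J$ for every $e > 0$ and every $\phi \in \sD_e$; equivalently, $\phi(F^e_*r) \in \m$ for all such data, which is exactly the condition $r \in P_{\sD}$. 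Thus the preimage of $P_{\sD_J}$ in $R$ coincides with $P_{\sD}$, yielding the identity. (Note that this does not presuppose $F$-purity of either pair: when $(R,\sD)$ is not $F$-pure, both sides equal $R/J$.)

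For the final claim, combine the previous paragraph with Theorem~\ref{thm:testidealsexist}. The pair $(R/J, \sD_J)$ is $F$-regular exactly when it admits no non-zero proper $\sD_J$-compatible ideal, since the test ideal $\tau(R/J,\sD_J)$ is the smallest such and equals $R/J$ iff $F$-regular. By Proposition~\ref{prop:splittingprimesexist} and the bijection already established, this is equivalent to $P_{\sD_J} = 0$: if $(R/J,\sD_J)$ fails to be $F$-pure then $P_{\sD_J} = R/J \neq 0$, while if it is $F$-pure then $P_{\sD_J}$ is the largest proper $\sD_J$-compatible ideal and vanishes precisely when no non-zero proper $\sD_J$-compatible ideal exists. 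Using $P_{\sD_J} = P_{\sD}/J$, this translates to $J = P_{\sD}$.

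The genuinely delicate point is the last equivalence: one must juggle the characterization of $F$-regularity via the test ideal with the characterization of $F$-purity via properness of the splitting prime, handling separately the case where $(R/J,\sD_J)$ is not $F$-pure. Once this bookkeeping is settled, the lemma is essentially formal, flowing from the ideal correspondence together with the definition of $P_{\sD_J}$ as a preimage.
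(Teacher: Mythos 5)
Your proof is correct and follows essentially the same route as the paper's: the bijection by checking the commutative square, the identity $P_{\sD}/J = P_{\sD_J}$, and the last claim via Theorem~\ref{thm:testidealsexist}. The only stylistic difference is in the middle step—you unfold the defining formula of the splitting prime directly, whereas the paper deduces $P_{\sD}/J = P_{\sD_J}$ from the maximality characterization in Proposition~\ref{prop:splittingprimesexist} combined with the bijection just established; both are equally quick, and your explicit remark that the identity holds even without assuming $F$-purity is a nice observation. In the final paragraph you silently invoke Theorem~\ref{thm:testidealsexist}, which is stated for domains; this is harmless since in one direction $F$-regularity forces $R/J$ to be a domain by Proposition~2.10(d), and in the other direction $J = P_{\sD}$ proper forces $(R,\sD)$ $F$-pure, hence $P_{\sD}$ prime and $R/J$ a domain—but it is worth noting this to close the loop.
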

\begin{proof}
Fix $\phi \in \sD_e$ with induced $\phi_J \in (\sD_J)_e$.  The result is immediate after checking that $\phi_J(F^e_* (I/J)) \subseteq I/J$ if and only if $\phi(F^e_* I) \subseteq I$.  For the second statement, use the maximality of $P_{\sD}$ from Proposition \ref{prop:splittingprimesexist}.  The last statement follows from the final assertion in Theorem \ref{thm:testidealsexist}.
\end{proof}

\begin{remark}
The test ideal is the positive characteristic analog of the multiplier ideal from higher dimensional algebraic geometry.  Likewise, the subscheme defined by the splitting prime is analogous to the minimal LC-center.  For additional discussion, see \cite{SchwedeTuckerTestIdealSurvey}.
\end{remark}

\section{$F$-signature}
\label{sec:gener-f-sign}

The $F$-signature of a local ring $R$ is a numerical invariant which, roughly speaking, asymptotically compares the number of splittings of Frobenius on $R$ to the number of splittings one expects from a regular local ring with the same dimension.
Formally introduced by C.~Huneke and G.~Leuschke in
\cite{HunekeLeuschkeTwoTheoremsAboutMaximal} (\cf
\cite{SmithVanDenBerghSimplicityOfDiff}) the existence of the
$F$-signature was only recently shown in full generality by the third
author in \cite{TuckerFSignatureExists}.  In the subsequent sections,
we present a vast generalization of this invariant which incorporates
the additional data of a Cartier subalgebra $\sD$ on $R$.   Following
the initial definitions, we present two deep results concerning the existence (Theorem~\ref{thm:existence}) and positivity (Theorem~\ref{thm:positivity}) of the $F$-signature in this context.

\subsection{$F$-splitting numbers}
\label{sec:splitting-numbers}


\begin{definition}
   Suppose $\sD$ is a Cartier subalgebra on an $F$-finite local ring $R$ and $e \in \Z_{> 0}$.
Within a fixed direct sum decomposition $F^{e}_{*}R \simeq \bigoplus_{i} M_{i}$
as an $R$-module, the summand $M_j$ is said to be a
  \emph{$\sD$-summand} if $M_j \simeq R$ and the
  associated ($R$-linear) projection homomorphism $F^{e}_{*}R \to M_j \simeq R$
  belongs to $\sD_{e}$. Since $\sD_0= \Hom_{R}(R,R) = R$, this characterization is independent of the chosen isomorphism $M_j \simeq R$.

 The \emph{$e$-th
    $F$-splitting number of $(R, \sD)$} is the maximal number
  $a_{e}^{\sD}$ of
  $\sD$-summands appearing in the various direct sum decompositions of
  $F^{e}_{*}R$ as an $R$-module.
\end{definition}

\begin{remark}
Any direct sum decomposition $F^{e}_{*}R \simeq R^{\oplus a} \oplus M$
where the factors $R^{\oplus a}$ of $F^e_* R$ are $\sD$-summands may be further
refined to ensure that $M$ has no $\sD$-summands, at which point it will
follow from Proposition~\ref{prop:charofsplittingnumbers} below that $a
= a_{e}^{\sD}$.  However, the number of \mbox{$\sD$-summands} in arbitrary direct sum
  decompositions of $F^{e}_{*}R$ into indecomposable \mbox{$R$-modules} need
  not equal $a_{e}^{\sD}$ in general.
\end{remark}

 Our next goal is to build towards several alternate characterizations of
the $F$-splitting numbers in Proposition~\ref{prop:charofsplittingnumbers}.

\begin{definition}
    Suppose $\sD$ is a Cartier subalgebra on an $F$-finite local ring $R$ with maximal ideal $\m$. For $e \in \Z_{>0}$, we consider an ideal
\[
I_{e}^{\sD} = \{ \, r \in R \; | \; \phi(F^{e}_{*}r) \in \m \mbox{ for
  all } \phi \in \sD_{e} \, \}
\]
and a submodule of $\sD_e$
\[
\sD_{e}^{\textnormal{ns}} = \{ \, \phi \in \sD_{e} \; | \; \phi \mbox{ is not surjective,
  \textit{i.e.} } \phi(F^{e}_{*}R) \subseteq \m \, \} \; .
\]
In other words, $I_{e}^{\sD}$ can be described as those elements of $R$ whose
$p^{e}$-th roots cannot be made to generate a $\sD$-summand of
$F^{e}_{*}R$.  Similarly, $\sD_{e}^{\textnormal{ns}}$ is the set of
maps in $\sD_{e}$ which cannot serve as a projection homomorphism onto
a $\sD$-summand of $F^{e}_{*}R$.
\end{definition}

Before proceeding, let us first record some elementary properties of the ideals $I_{e}^{\sD}$.

\begin{lemma}
  \label{lem:propsofIe}
 Suppose $\sD$ is a Cartier subalgebra on an $F$-finite local ring $(R,\m)$.  If $\phi \in \sD_{e}$ for some $e \in \Z_{>0}$, then for all $e' \in \Z_{>0}$ we have $\m^{[p^{e'}]} \subseteq I_{e'}^{\sD}$ and $\phi(F^{e}_{*}I_{e+e'}^{\sD}) \subseteq I_{e'}^{\sD}$. In particular, if $\phi$ is surjective then $I_{e + e'}^{\sD} \subseteq I_{e'}^{\sD}$.
\end{lemma}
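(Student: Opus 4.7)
The plan is to dispatch the three assertions in order, leaning on the Cartier-algebra composition formula $(\psi \cdot \phi)(F^{e+e'}_{*} x) = \psi(F^{e'}_{*} \phi(F^{e}_{*} x))$ recalled in the paper, for $\phi \in \sD_{e}$ and $\psi \in \sD_{e'}$. For $\m^{[p^{e'}]} \subseteq I_{e'}^{\sD}$, I would fix a typical generator $m^{p^{e'}} x$ with $m \in \m$ and $x \in R$; since $F^{e'}_{*}(m^{p^{e'}} x) = m \cdot F^{e'}_{*} x$ under the $R$-module structure on $F^{e'}_{*} R$, the $R$-linearity of any $\psi \in \sD_{e'}$ yields
\[
\psi(F^{e'}_{*} m^{p^{e'}} x) \;=\; m \,\psi(F^{e'}_{*} x) \;\in\; \m,
\]
and additivity of $\psi$ extends this to all of $\m^{[p^{e'}]}$.

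For $\phi(F^{e}_{*} I_{e+e'}^{\sD}) \subseteq I_{e'}^{\sD}$, fix $r \in I_{e+e'}^{\sD}$ and any $\psi \in \sD_{e'}$. Because $\sD$ is closed under multiplication, $\psi \cdot \phi \in \sD_{e+e'}$, and the composition formula gives
\[
\psi\bigl(F^{e'}_{*} \phi(F^{e}_{*} r)\bigr) \;=\; (\psi \cdot \phi)(F^{e+e'}_{*} r) \;\in\; \m
\]
by the defining property of $I_{e+e'}^{\sD}$. Since $\psi$ was arbitrary, $\phi(F^{e}_{*} r) \in I_{e'}^{\sD}$.

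For the final implication, I would exploit surjectivity of $\phi$ to pick $s_{0} \in R$ with $\phi(F^{e}_{*} s_{0}) = 1$. Then for $r \in I_{e+e'}^{\sD}$, the $R$-linearity of $\phi$ produces
\[
r \;=\; r \cdot \phi(F^{e}_{*} s_{0}) \;=\; \phi(r \cdot F^{e}_{*} s_{0}) \;=\; \phi(F^{e}_{*} r^{p^{e}} s_{0}).
\]
By the previous step, it then suffices to check $r^{p^{e}} s_{0} \in I_{e+e'}^{\sD}$, which in turn is exactly the statement that $I_{e+e'}^{\sD}$ is an $R$-ideal. Closure under multiplication by $a \in R$ follows from the graded structure $\sD_{e+e'} \cdot \sD_{0} \subseteq \sD_{e+e'}$: any $\chi \in \sD_{e+e'}$ yields $\chi \cdot a \in \sD_{e+e'}$ with $(\chi \cdot a)(F^{e+e'}_{*} x) = \chi(F^{e+e'}_{*} ax)$, so that $r \in I_{e+e'}^{\sD}$ forces $\chi(F^{e+e'}_{*} ar) \in \m$. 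Writing $r^{p^{e}} s_{0} = (r^{p^{e}-1} s_{0}) \cdot r$ then places it in $I_{e+e'}^{\sD}$, completing the argument.

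The main obstacle is purely organizational: spotting that surjectivity of $\phi$ allows one to rewrite $r$ as $\phi$ of an element $r^{p^{e}} s_{0}$ still inside $I_{e+e'}^{\sD}$. This trick relies on the essentially formal observation that $I_{e+e'}^{\sD}$ is an ideal, after which the reduction to the second containment is automatic.
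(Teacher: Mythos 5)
Your arguments for the first two containments coincide with the paper's: the identity $F^{e'}_{*}(m^{p^{e'}}x)=m\cdot F^{e'}_{*}x$ for the first, and the composition $\psi\cdot\phi\in\sD_{e+e'}$ together with the defining property of $I_{e+e'}^{\sD}$ for the second. For the final implication you take a variant route. The paper fixes $y\in I_{e+e'}^{\sD}$ and $\phi'\in\sD_{e'}$, forms $\phi\cdot\phi'\in\sD_{e+e'}$ to obtain $\phi\bigl(F^{e}_{*}(\phi'(F^{e'}_{*}y))\bigr)\in\m$, and then deduces $\phi'(F^{e'}_{*}y)\in\m$ from surjectivity of $\phi$. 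You instead choose $s_{0}$ with $\phi(F^{e}_{*}s_{0})=1$, rewrite $r=\phi\bigl(F^{e}_{*}(r^{p^{e}}s_{0})\bigr)$, note that $r^{p^{e}}s_{0}\in I_{e+e'}^{\sD}$ because $I_{e+e'}^{\sD}$ is an ideal (via $\sD_{e+e'}\cdot\sD_{0}\subseteq\sD_{e+e'}$), and then invoke the already-established containment $\phi(F^{e}_{*}I_{e+e'}^{\sD})\subseteq I_{e'}^{\sD}$. Both are correct, and the underlying mechanism is the same (closure of $\sD$ under $F^{e}_{*}R$-twists). Your version has the minor advantage of making the ideal property of $I_{e+e'}^{\sD}$ fully explicit: the paper's ``so necessarily $\phi'(F^{e'}_{*}y)\in\m$'' also needs this, since $\phi(F^{e}_{*}z)\in\m$ for a single element $z$ does not by itself force $z\in\m$, even for surjective $\phi$; one must first obtain $\phi(F^{e}_{*}(zR))\subseteq\m$ by twisting, exactly the step your reduction isolates cleanly.
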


\begin{proof}
If $x \in \m$ and $\phi' \in \sD_{e'}$ for some $e' \in \Z_{>0}$, we have $\phi'(F^{e'}_{*}x^{p^{e'}}) = \phi'(x \cdot F^{e'}_{*}1) = x \phi'(F^{e'}_{*}1) \in \m$.  Thus, we conclude $\m^{[p^{e'}]} \subseteq I_{e'}^{\sD}$.   If now $y \in I_{e+e'}^{\sD}$, we must have
\[
(\phi' \cdot \phi)(F^{e+e'}_{*}y) = \phi'\left( F^{e'}_{*}\left( \phi(F^{e}_{*}y) \right) \right) \in \m
 \]
by the definition of $I_{e+e'}^{\sD}$ since $\phi' \cdot \phi \in \sD_{e+e'}$ as $\sD$ is closed under multiplication inside $\sC_{R}$.  It follows that $\phi(F^{e}_{*}I^{\sD}_{e+e'}) \subseteq I_{e'}^{\sD}$.
If additionally $\phi$ is surjective, we again have $(\phi \cdot \phi')(F^{e+e'}_{*}y) = \phi\left( F^{e}_{*}\left( \phi'(F^{e'}_{*}y) \right) \right) \subseteq \m$ and so necessarily $\phi'(F^{e'}_{*}y) \in \m$.  Therefore $y \in I_{e'}^{\sD}$ and so $I_{e+e'}^{\sD} \subseteq I_{e'}^{\sD}$ as desired.
\end{proof}

\begin{proposition}
\label{prop:charofsplittingnumbers}
  If $\sD$ is a Cartier subalgebra on an $F$-finite local ring
  $(R,\m)$, then for all $e \in \Z_{> 0}$ we have
\begin{enumerate}
\item
\parbox{\textwidth}{\centering $a_{e}^{\sD} =
\length_{R}(\sD_{e}/\sD_{e}^{\textnormal{ns}}) = \length_{R}(F^{e}_{*}(R/I_{e}^{\sD})) = p^{e \alpha(R)}
\length_{R}( R/I_{e}^{\sD} ) \; .$}
\end{enumerate}
Furthermore, the $F$-splitting number $a_{e}^{\sD}$ can alternatively
be described as either
\begin{enumerate}
\setcounter{enumi}{1}
\item the maximal
possible number
of direct summands isomorphic to $R$ in any direct sum decomposition of $\sC_{e}^{R}$
as an $R$-module which are contained inside $\sD_{e}$, or
\item the maximal
possible number of direct summands isomorphic to $R$ in any direct sum
decomposition of $\sD_{e}$ as an $R$-module whose associated
projection homomorphism $\sD_{e} \to R$ can be extended to a
homomorphism $\sC_{e}^{R} \to R$.
\end{enumerate}
\end{proposition}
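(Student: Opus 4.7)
The plan is to establish all three descriptions through a single perfect pairing modulo the maximal ideal, from which everything else follows by Nakayama-type lifting arguments.

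For part (a), I would first set up the evaluation pairing
\[
B \colon \sD_e \times F^e_* R \to R, \qquad (\phi, g) \mapsto \phi(g),
\]
and note immediately from the definitions that $\sD_e^{\mathrm{ns}}$ is its left annihilator in $\sD_e$, while $F^e_* I_e^{\sD}$ is its right annihilator in $F^e_* R$. Since $\m^{[p^e]} \subseteq I_e^{\sD}$ by Lemma~\ref{lem:propsofIe} and symmetrically $\m \sD_e \subseteq \sD_e^{\mathrm{ns}}$, reducing $B$ modulo $\m$ yields a $k$-bilinear pairing
\[
\bar B \colon (\sD_e / \sD_e^{\mathrm{ns}}) \times F^e_*(R/I_e^{\sD}) \to k
\]
between finite-dimensional $k$-vector spaces whose kernels on both sides are trivial. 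The mutual embeddings of each side into the $k$-dual of the other force equal dimensions and perfectness of $\bar B$, and combined with the identity $\length_R F^e_*(M) = p^{e\alpha(R)} \length_R(M)$ recalled in the preliminaries, this yields the two equalities on the right of (a).

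To match $a_e^{\sD}$ with these dimensions, the key observation is that $r$ simultaneous $\sD$-summands of $F^e_* R$ correspond exactly to a ``dual basis'' datum $\phi_1, \ldots, \phi_r \in \sD_e$ and $g_1, \ldots, g_r \in F^e_* R$ with $\phi_i(g_j) = \delta_{ij}$; one recovers the splitting as $\bigoplus_i R g_i \oplus \bigcap_i \ker \phi_i$. Any such data reduces modulo $\m$ to linearly independent families on the two sides of $\bar B$, giving $a_e^{\sD} \leq \length_R(\sD_e / \sD_e^{\mathrm{ns}})$. Conversely, a basis $\bar\phi_1, \ldots, \bar\phi_r$ of $\sD_e/\sD_e^{\mathrm{ns}}$ admits, by perfectness, a $\bar B$-dual basis $\bar g_1, \ldots, \bar g_r$; any lifts $g_j \in F^e_* R$ yield a matrix $(\phi_i(g_j)) \in I_r + \m \cdot M_r(R)$ which is invertible over the local ring $R$, and an invertible $R$-linear change of coordinates among the $g_j$'s produces $\phi_i(g_j) = \delta_{ij}$ on the nose.

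For (b) and (c) I would prove the chain $a_e^{\sD} \leq b \leq c \leq a_e^{\sD}$, where $b$ and $c$ denote the maximal counts defined in the statement. The inequality $a_e^{\sD} \leq b$ is direct: applying $\Hom_R(-, R)$ to a maximal splitting $F^e_* R = R^{a_e^{\sD}} \oplus M$ produces $\sC_e^R = R^{a_e^{\sD}} \oplus \Hom_R(M, R)$ with the $R^{a_e^{\sD}}$ summand generated by the projections $\phi_i \in \sD_e$. For $b \leq c$, any decomposition $\sC_e^R = R\phi_1 \oplus \cdots \oplus R\phi_b \oplus N'$ with $\phi_i \in \sD_e$ induces $\sD_e = R\phi_1 \oplus \cdots \oplus R\phi_b \oplus (\sD_e \cap N')$ (since each $R\phi_i$ already lies in $\sD_e$), and the corresponding projections are the restrictions of the ambient $\epsilon_i \colon \sC_e^R \to R$ --- hence extend tautologically.

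The main obstacle will be $c \leq a_e^{\sD}$: given $\sD_e = R\phi_1 \oplus \cdots \oplus R\phi_c \oplus N''$ whose projections $\pi_i$ extend to $\tilde\pi_i \colon \sC_e^R \to R$, I would show that $\bar\phi_1, \ldots, \bar\phi_c$ remain independent in $\sD_e/\sD_e^{\mathrm{ns}}$, which forces $c \leq a_e^{\sD}$ by the previous step. The strategy is to exploit that any evaluation $\mathrm{ev}_g \in \Hom_R(\sC_e^R, R)$ automatically sends $\sC_e^{R, \mathrm{ns}}$ into $\m$, and to modify each $\tilde\pi_i$ within its coset of extensions --- that is, by elements of the kernel $\Hom_R(\sC_e^R / \sD_e, R)$ of the restriction map --- so as to inherit this property: $\tilde\pi_i(\sC_e^{R, \mathrm{ns}}) \subseteq \m$. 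Once this is arranged, applying $\tilde\pi_i$ to an alleged dependence $\sum r_j \phi_j \in \sD_e^{\mathrm{ns}} \subseteq \sC_e^{R, \mathrm{ns}}$ forces $r_i = \tilde\pi_i(\sum r_j \phi_j) \in \m$, completing the argument. The delicate point --- where I expect the principal technical difficulty --- is verifying that such a modification always exists; the freedom afforded by the extension data together with the free-summand structure of the $\phi_i$ should provide enough room, but some care is required as the natural map $F^e_* R \to \Hom_R(\sC_e^R, R)$ need not be surjective.
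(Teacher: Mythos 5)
Your pairing argument for part~(a) is correct and is a genuinely different route from the paper, which instead proves a general lemma (Lemma~\ref{lemma:aboutsplittingnumbers}) about direct sum decompositions of a reflexive module $M$ and its dual: one takes a maximal decomposition $M = R^{\oplus a} \oplus N$ with projections in $D$ and shows by a shifting argument that $\phi(N) \subseteq \m$ for all $\phi \in D$, from which all the length identities drop out at once. Your reduction--mod--$\m$ pairing $\bar B$ plus a Nakayama lift reaches the same conclusion and is arguably more transparent for~(a); the paper's decomposition lemma has the advantage of delivering (a), (b), and (c) in a single stroke.

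The gap is in your step $c \leq a_e^{\sD}$, and it is real. Your strategy of modifying $\tilde\pi_i$ by elements of $\Hom_R(\sC_e^R/\sD_e, R)$ cannot succeed as framed, because the quantity you ultimately apply $\tilde\pi_i$ to lies in $\sD_e$: for $\sum r_j\phi_j \in \sD_e^{\mathrm{ns}}$ one has $\tilde\pi_i(\sum r_j\phi_j) = \pi_i(\sum r_j\phi_j) = r_i$ \emph{regardless} of which extension $\tilde\pi_i$ you choose, so adjusting $\tilde\pi_i$ off $\sD_e$ changes nothing. What you actually need is the statement that if $R\phi_1 \oplus \cdots \oplus R\phi_c$ is a free direct summand of $\sC_e^R$ with $\phi_i \in \sD_e$, then the classes $\bar\phi_i$ are independent in $\sD_e/\sD_e^{\mathrm{ns}}$, and this is exactly where the paper invokes the hypothesis that $F^e_*R$ is \emph{reflexive} as an $R$-module. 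With reflexivity, each coordinate projection $\epsilon_i \colon \sC_e^R \to R$ onto $R\phi_i$ is an evaluation $\mathrm{ev}_{g_i}$ for some $g_i \in F^e_*R$, hence automatically sends the non-surjective maps into $\m$, and independence follows with no modification at all. A cleaner ordering of your three inequalities also helps: note that $c \leq b$ is the easy direction (the extensions $\tilde\pi_i$ supply a retraction of the inclusion $R\phi_1 \oplus \cdots \oplus R\phi_c \hookrightarrow \sC_e^R$, so this span is a free summand of $\sC_e^R$), and then combining this with your $a_e^{\sD} \leq b \leq c$ reduces everything to the single inequality $b \leq a_e^{\sD}$, which is precisely the one needing reflexivity. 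You correctly sensed the obstruction when you remarked that $F^e_*R \to \Hom_R(\sC_e^R, R)$ need not be surjective; reflexivity of $F^e_*R$ is the hypothesis that removes it, and it is exactly the extra assumption in Lemma~\ref{lemma:aboutsplittingnumbers} of the paper.
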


Roughly speaking, we interpret the equality $a_{e}^{\sD} =
\length_{R}(\sD_{e}/\sD_{e}^{\textnormal{ns}})$ above to say that the $e$-th splitting
number of $(R, \sD)$ counts the number of splittings of the $e$-th
iterate of Frobenius that are contained in $\sD$.  In a sense, this
gives intuition into why one would expect a Cartier subalgebra $\sD$ with
smaller splitting numbers to correspond to more severe singularities.
The proof of Proposition~\ref{prop:charofsplittingnumbers}
is immediate
from the following more general lemma.

\begin{lemma}
\label{lemma:aboutsplittingnumbers}
  Let $(R, \m)$ be a local ring and $M$ a finitely generated reflexive $R$-module, and suppose $D \subset \Hom_{R}(M,R)$ is an $R$-submodule.
Then we have
\[
\length_{R}\left( M / \langle m \in M \, | \, \phi(m) \in \m \mbox{ for all } \phi \in D \rangle \right)
= \length_{R}\left( D / \langle \phi \in D \, | \, \phi(M) \subseteq \m \rangle \right) \, .
\]
Furthermore, this quantity can be expressed in terms of direct sum decompositions as follows:
\begin{enumerate}
\item
The maximal number of direct summands of M isomorphic to $R$ whose associated projection homomorphisms $M \to R$ belong to $D$.
\item
The maximal number of direct summands of $\Hom_{R}(M,R)$ isomorphic to $R$ which are contained in $D$.
\item
The maximal number of direct summands of $D$ isomorphic to $R$ whose associated projection homomorphisms $D \to R$ extend to a homomorphism $\Hom_{R}(M,R) \to R$.
\end{enumerate}
\end{lemma}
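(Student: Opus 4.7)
The plan is to realize both $M/N$ and $D/D^{\mathrm{ns}}$ (writing $N$ and $D^{\mathrm{ns}}$ for the two submodules appearing in the displayed equation of the lemma) as finite-dimensional $k$-vector spaces in perfect duality, then to match this common dimension with the maximal counts in (a), (b), and (c). The inclusions $\m M \subseteq N$ and $\m D \subseteq D^{\mathrm{ns}}$ are immediate, so both quotients are naturally $k = R/\m$-vector spaces; finite generation of $M$ over the Noetherian $R$ yields that $\Hom_R(M,R)$ and hence $D$ are finitely generated, so both quotients are finite dimensional with their $k$-dimensions equal to their $R$-lengths. The evaluation pairing $M \times D \to R$ descends modulo $\m$ to a pairing $M/N \times D/D^{\mathrm{ns}} \to k$ which, by the very definition of $N$ and $D^{\mathrm{ns}}$, is non-degenerate on both sides. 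Hence $\dim_k(M/N) = \dim_k(D/D^{\mathrm{ns}})$; let $a$ denote this common dimension.

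For (a), the easy direction is that any decomposition $M = R^{\oplus a} \oplus M'$ with projections $\pi_i \in D$ yields basis elements $e_1,\ldots,e_a$ whose images span a $k$-independent set in $M/N$ (if $\sum c_i e_i \in N$, applying $\pi_j$ shows $c_j \in \m$). Conversely, lift a basis $\bar m_1, \ldots, \bar m_a$ of $M/N$ to $m_1, \ldots, m_a \in M$ and, by perfectness of the pairing, choose $\phi_1, \ldots, \phi_a \in D$ with $\phi_i(m_j) \equiv \delta_{ij} \pmod{\m}$. The matrix $A = (\phi_i(m_j))$ reduces to the identity modulo $\m$ and is hence invertible over $R$; setting $\phi'_i = \sum_k (A^{-1})_{ik} \phi_k \in D$ then gives $\phi'_i(m_j) = \delta_{ij}$ exactly. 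The map $\Psi = (\phi'_1, \ldots, \phi'_a) \colon M \to R^{\oplus a}$ thus admits the section $s(r_1, \ldots, r_a) = \sum r_i m_i$, producing $M \cong R^{\oplus a} \oplus \ker(\Psi)$ with projections precisely the $\phi'_i \in D$.

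For (b) and (c), the key additional input is reflexivity of $M$, which identifies $\Hom_R(\Hom_R(M, R), R) \cong M$ canonically, so that every $R$-linear map $\Hom_R(M, R) \to R$ is evaluation at some element of $M$. Thus in (b), a decomposition $\Hom_R(M, R) = R\phi_1 \oplus \cdots \oplus R\phi_a \oplus H$ with $\phi_i \in D$ has associated projections $\pi_i$ that are evaluations at elements $m_i \in M$, and the conditions $\pi_i(\phi_j) = \delta_{ij}$ reduce to $\phi_i(m_j) = \delta_{ij}$; similarly, in (c), the extendability of a projection $D \to R$ to $\Hom_R(M, R) \to R$ forces that extension to be evaluation at some $m_i \in M$, again recovering $\phi_i(m_j) = \delta_{ij}$. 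The same lift-and-split argument as in (a), applied symmetrically, then produces the required decompositions in both settings, so that all three counts equal $a$.

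The principal obstacle is the lifting step in (a): passing from an ``approximate'' dual basis matching $\delta_{ij}$ only modulo $\m$ to an exact one. This is resolved by the invertibility of the transition matrix $A$ over $R$ combined with the $R$-linearity of $D \subseteq \Hom_R(M,R)$, which ensures that the adjusted $\phi'_i$ remain in $D$.
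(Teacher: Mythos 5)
Your proposal is correct, and the route is genuinely different from the paper's. The paper begins by choosing a decomposition $M = R^{\oplus a} \oplus N$ that is \emph{maximal} with respect to all $a$ projections lying in $D$, then proves as its key step (by contradiction, adjusting an offending $\phi$ by a linear combination of the chosen projections) that every $\phi \in D$ must map $N$ into $\m$; from this the two quotients are both seen to be $k^{\oplus a}$, and parts (a)--(c) follow. You instead establish the length equality first, via the perfect pairing $M/N \times D/D^{\mathrm{ns}} \to k$: non-degeneracy on both sides is immediate from the defining conditions, and finite generation gives finite dimensionality, so $\dim_k(M/N) = \dim_k(D/D^{\mathrm{ns}})$ with no reference to any chosen decomposition. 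You then realize this common dimension $a$ as the answer to (a) by a Nakayama-style lifting: lift dual bases achieving $\delta_{ij}$ modulo $\m$, perturb by the inverse of the transition matrix (invertible since it is $\equiv I \bmod \m$) to achieve exact duality, and split $M \cong R^{\oplus a} \oplus \ker\Psi$ — with the crucial observation that $R$-linearity of $D$ keeps the adjusted $\phi_i'$ inside $D$. The pairing approach buys you a cleaner, more symmetric argument that cleanly separates the dimension count from the construction of witnessing decompositions; the paper's approach packages everything into one maximality argument, which is shorter but more ad hoc. Both use reflexivity only in (b) and (c), via biduality identifying functionals on $\Hom_R(M,R)$ with evaluations at elements of $M$. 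One cosmetic slip: in your easy direction for (a) you wrote ``$M = R^{\oplus a} \oplus M'$'' where you meant an arbitrary $R^{\oplus a'}$; the argument you give is correct as stated.
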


\begin{proof}
  Let  $M = R^{\oplus a} \oplus N$ be any direct sum decomposition which is maximal with respect to having all of the corresponding projections $\phi_{1}, \ldots, \phi_{a} \in \Hom_{R}(M,R)$ belong to $D$. In other words, $N$ cannot be further decomposed as $N = R \oplus N'$ in such a way that the additional projection $M \to R$ belongs to $D$.  There is a dual direct sum decomposition
$\Hom_{R}(M,R) = R^{\oplus a} \oplus \Hom_{R}(N,R)$ where $\phi_{1}, \ldots, \phi_{a}$ form a free basis for $R^{\oplus a} \subseteq \Hom_{R}(M,R)$. Let $e_{1}, \ldots, e_{a} \in R$ be the corresponding dual free basis for $R^{\oplus a} \subseteq M$ in the initial direct sum decomposition of $M$. Since $\phi_{1}, \ldots, \phi_{a} \in D$, we also have an induced direct sum decomposition $D = R^{\oplus a} \oplus (\Hom_{R}(N,R) \cap D)$.

If $\phi \in D$, we claim $\phi(N) \subseteq \m$.  Indeed, supposing otherwise there exists $n \in N$ with $\phi(n) = 1$.  Replacing $\phi$ by $\phi - \sum_{i=1}^{a} \phi(e_{i})\phi_{i}$, we may assume $\phi \in \Hom_{R}(N,R)$ which we view as a subset of $\Hom_{R}(M,R)$ via the projection $M = R^{\oplus a}  \oplus N \to N$.  Thus, the inclusion $Rn \subseteq N$ is split and induces a decomposition $N = R \oplus N'$ in such a way that the projection homomorphism $M \to R$ is $\phi$ itself. Since $\phi \in D$, this is a contradiction and we conclude $\phi(N) \subseteq \m$ for all $\phi \in D$.

Thus, with respect to the direct sum decompositions of $M$ and $D$ above, we have
\[
 \langle m \in M \, | \, \phi(m) \in \m \mbox{ for all } \phi \in D \rangle = \m^{\oplus a} \oplus N \subseteq R^{\oplus a} \oplus N = M
\]
\[
\langle \phi \in D \, | \, \phi(M) \subseteq \m \rangle = \m^{\oplus a} \oplus (\Hom_{R}(N,R) \cap D) \subseteq R^{\oplus a} \oplus (\Hom_{R}(N,R) \cap D) = D\, .
\]
In particular, both quotients are isomorphic to $k^{\oplus a}$ so that we have
\[
\length_{R}\left( M / \langle m \in M \, | \, \phi(m) \in \m \mbox{ for all } \phi \in D \rangle \right)
= \length_{R}\left( D / \langle \phi \in D \, | \, \phi(M) \subseteq \m \rangle \right)  = a \, .
\]
This shows the desired equality, as well as the equivalence with (a).

To show equivalence with (b), let $\Hom_{R}(M,R) = R^{\oplus a'} \oplus D'$ be a direct sum decomposition such that $R^{\oplus a'} \subseteq D$ and  $D'$ has no direct summand which is isomorphic to $R$ and contained in $D$. If $\phi_{1}', \ldots, \phi_{a'}'$ form a free basis for $R^{\oplus a'} \subseteq D \subseteq \Hom_{R}(M,R)$, then applying $\Hom_{R}(\blank,R)$ and using that $M$ is reflexive gives a direct sum decomposition $M = R^{\oplus a'} \oplus \Hom_{R}(D',R)$ where the projection homomorphisms onto the components of $R^{\oplus a'}$ are $\phi_{1}', \ldots, \phi_{a'}'$ and $\Hom_{R}(D',R)$ has no direct summand isomorphic to $R$ inducing a projection map $M \to R$ inside of $D$.  Thus, we must have $a' = a$ as desired.  The equivalence of (b) and (c) is straightforward.
\end{proof}

\begin{corollary}
  \label{cor:completionhassamenumbers}
 Suppose $\sD$ is a Cartier subalgebra on an $F$-finite local ring
  $(R,\m)$ with $\m$-adic completion $\hat{R}$. Let $\hat\sD$ denote the induced Cartier subalgebra on $\hat R$, \textit{i.e.} $\hat\sD_{e}$ is the image of $\sD_{e}$ under the natural map $\sC_{e}^{R} \to \hat R \tensor_{R} \sC_{e}^{R} = \sC_{e}^{\hat R}$ for all $e \in \Z_{>0}$.  Then $a_{e}^{\sD} = a_{e}^{\hat \sD}$.
\end{corollary}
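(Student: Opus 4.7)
The plan is to leverage the characterization $a_e^{\sD} = p^{e\alpha(R)}\length_R(R/I_e^{\sD})$ from Proposition~\ref{prop:charofsplittingnumbers}(a) together with the faithful flatness of $R \to \hat R$. Since $\alpha(R) = \alpha(\hat R)$ (the residue field is unchanged) and $\length$ of finite-length modules is preserved under faithfully flat extension, the whole task reduces to proving the clean identity
\[
I_e^{\hat\sD} = I_e^{\sD} \cdot \hat R.
\]

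First I would verify that $\sD_e$ is finitely generated as an $R$-module. Because $R$ is $F$-finite and Noetherian, $F^e_* R$ is a finitely presented $R$-module, so $\sC_e^R = \Hom_R(F^e_* R, R)$ is a finitely generated $R$-module, and hence so is its submodule $\sD_e$. Pick $R$-module generators $\phi_1, \ldots, \phi_s$ of $\sD_e$. Then $F^e_* I_e^{\sD}$ is visibly the kernel of the $R$-linear map
\[
\Psi \colon F^e_* R \longrightarrow (R/\m)^{\oplus s}, \qquad F^e_* r \longmapsto \bigl(\overline{\phi_i(F^e_* r)}\bigr)_{i=1}^s,
\]
since membership in $I_e^{\sD}$ is witnessed on any set of $R$-module generators of $\sD_e$.

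Next I would observe that the induced maps $\hat\phi_i = \hat R \otimes_R \phi_i$ generate $\hat\sD_e$ as an $\hat R$-module (by flatness, $\hat R \otimes_R \sD_e \hookrightarrow \hat R \otimes_R \sC_e^R = \sC_e^{\hat R}$ has image equal to $\hat\sD_e$). Applying the exact functor $\hat R \otimes_R -$ to the kernel sequence for $\Psi$, and using that $F^e_*$ commutes with completion on finitely generated modules ($F^e_* \hat R = \hat R \otimes_R F^e_* R$ via $F$-finiteness), one obtains that $F^e_*(I_e^{\sD}\hat R)$ is the kernel of the corresponding map $\hat\Psi$ built from the $\hat\phi_i$. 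By the same reasoning as above applied over $\hat R$, this kernel is exactly $F^e_* I_e^{\hat\sD}$, yielding the desired identity $I_e^{\hat\sD} = I_e^{\sD}\hat R$.

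Finally, combining this with Proposition~\ref{prop:charofsplittingnumbers}(a),
\[
a_e^{\hat\sD} = p^{e\alpha(\hat R)}\length_{\hat R}\bigl(\hat R/I_e^{\hat\sD}\bigr) = p^{e\alpha(R)}\length_{\hat R}\bigl(\hat R \otimes_R (R/I_e^{\sD})\bigr) = p^{e\alpha(R)}\length_R\bigl(R/I_e^{\sD}\bigr) = a_e^{\sD},
\]
where the middle equality uses faithful flatness and the finite length of $R/I_e^{\sD}$ (guaranteed by $\m^{[p^e]} \subseteq I_e^{\sD}$ from Lemma~\ref{lem:propsofIe}). The only real obstacle is the bookkeeping in the second paragraph — namely ensuring $\sD_e$ is finitely generated so that $I_e^{\sD}$ can be cut out by finitely many conditions and hence behaves well under flat base change; once this is in hand, everything else is formal.
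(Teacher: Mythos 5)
Your proof is correct, and at the top level it follows the same blueprint as the paper's: reduce everything to the ideal identity $I_e^{\hat\sD} = I_e^{\sD}\hat R$ and then invoke Proposition~\ref{prop:charofsplittingnumbers}(a) together with $\alpha(R)=\alpha(\hat R)$ and preservation of finite length under completion. Where you diverge is in the mechanism for that identity. The paper gets it in two lines: since $\hat\sD_e$ is by definition the image of $\sD_e$, one has $I_e^{\hat\sD}\cap R = I_e^{\sD}$ immediately (an element $\phi(F^e_*r)$ lands in $\m$ iff its image lands in $\hat\m$); then, because both $I_e^{\sD}$ and $I_e^{\hat\sD}$ contain $\m^{[p^e]}$ (resp.\ its extension) by Lemma~\ref{lem:propsofIe}, the standard bijection between ideals of $R$ and of $\hat R$ lying over the Artinian quotient $R/\m^{[p^e]}$ forces $I_e^{\sD}\hat R = I_e^{\hat\sD}$. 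You instead choose finitely many $R$-module generators of $\sD_e$, realize $F^e_*I_e^{\sD}$ as the kernel of an explicit map into $(R/\m)^{\oplus s}$, and flat-base-change that kernel sequence along $R\to\hat R$, using $\hat R\otimes_R F^e_*R \cong F^e_*\hat R$. Both routes are sound. The paper's is shorter and avoids any discussion of generators or of the subtlety that $\hat\sD_e$ as literally defined is the set-theoretic image rather than the $\hat R$-span of that image (a distinction that, as you implicitly note, is invisible to $I_e^{\hat\sD}$). Your route has the virtue of making the flat-base-change mechanism explicit, which generalizes more readily to other faithfully flat extensions, at the cost of extra bookkeeping (finite generation of $\sD_e$, commutation of $F^e_*$ with completion) that the paper sidesteps entirely via the $\m^{[p^e]}$ observation.
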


\begin{proof}
Since $\hat\sD_{e}$ is the image of $\sD_{e}$, it follows that $I_{e}^{\hat\sD} \cap R = I_{e}^{\sD}$.  As $\m^{[p^{e}]} \subseteq I_{e}^{\sD}$, we conclude $I_{e}^{\sD} \hat R = I_{e}^{\hat\sD}$ and so the statement follows from Proposition~\ref{prop:charofsplittingnumbers}~(a).
\end{proof}

\subsection{Existence}
\label{sec:gener-f-sign-1}

\begin{definition}
   Suppose $\sD$ is a Cartier subalgebra on an $F$-finite local ring
 $(R,\m)$.  The \emph{semigroup of $\sD$}
  is the set $\Gamma_{\sD} = \{ \, e \in \Z_{\geq 0} \; | \; a_{e}^{\sD} \neq 0
  \, \}$, and the \emph{semigroup index of $\sD$} is $n_{\sD} =
  \gcd( \Gamma_{\sD} )$.
\end{definition}

\begin{lemma}
   Suppose $\sD$ is a Cartier subalgebra on an $F$-finite local ring
 $(R,\m)$.
 Then $\Gamma_{\sD}$ is a subsemigroup of $(\Z_{\geq 0}, +)$, and
 $n_{\sD} =\# (\Z / \Z\Gamma_{\sD} )$ is the index in $\Z$ of the
 subgroup generated by $\Gamma_{\sD}$.
Furthermore, if $(R,\sD)$ is $F$-regular, then $n_{\sD} = \gcd ( \{ \, e \in
\Z_{\geq 0} \; | \; \sD_{e} \neq 0 \, \})$.
\end{lemma}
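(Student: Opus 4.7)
The plan is to verify the three assertions in turn, with the $F$-regular equality being the only place where serious work is required.

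First I will show $\Gamma_{\sD}$ is a subsemigroup. Given $e_1, e_2 \in \Gamma_{\sD}$, pick surjective $\phi_i \in \sD_{e_i}$. The graded product $\phi_1 \cdot \phi_2 = \phi_1 \circ F^{e_1}_* \phi_2$ lives in $\sD_{e_1+e_2}$ since $\sD$ is a graded subalgebra of $\sC^R$. As $F^{e_1}_*$ is exact and preserves surjectivity, $F^{e_1}_*\phi_2$ is surjective, so $\phi_1 \cdot \phi_2$ is surjective, witnessing $e_1+e_2 \in \Gamma_{\sD}$.

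Next, the identification $n_{\sD} = \#(\Z/\Z\Gamma_{\sD})$ is a standard fact from elementary number theory: any nonempty subsemigroup $\Gamma$ of $\Z_{\geq 0}$ generates $\gcd(\Gamma)\cdot\Z$ as a subgroup of $\Z$, whose index is $\gcd(\Gamma)$. I will just quote this.

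The substantive content is the last assertion. Set $d = \gcd\{e : \sD_e \neq 0\}$; since $\Gamma_{\sD} \subseteq \{e : \sD_e \neq 0\}$, one direction $d \mid n_{\sD}$ is automatic. For the reverse, I will show that every $e$ with $\sD_e \neq 0$ lies in the subgroup of $\Z$ generated by $\Gamma_{\sD}$, which by the previous paragraph equals $n_{\sD}\Z$. Fix such $e$ and a nonzero $\phi \in \sD_e$. By $F$-regularity, $R$ is a domain, so nonzero elements are outside every minimal prime; choose $x \in R$ with $c := \phi(F^e_* x) \neq 0$. The crucial observation is that, because $\sD_0 = R$ and $\sD$ is closed under the Cartier multiplication, the right multiplication $\phi \mapsto \phi \cdot x$ (which computes as $(\phi \cdot x)(F^e_* s) = \phi(F^e_*(xs))$) keeps one inside $\sD_e$; in particular $\phi' := \phi \cdot x \in \sD_e$ satisfies $\phi'(F^e_* 1) = c$. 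Applying $F$-regularity to $c$ produces $e_1 \geq 0$ and $\psi \in \sD_{e_1}$ with $\psi(F^{e_1}_* c) = 1$. The composite $\psi \cdot \phi' \in \sD_{e+e_1}$ then evaluates on $F^{e+e_1}_* 1$ to $\psi(F^{e_1}_* c) = 1$, so $e + e_1 \in \Gamma_{\sD}$. By the same token, $\psi \cdot c \in \sD_{e_1}$ is surjective at $F^{e_1}_* 1$, so $e_1 \in \Gamma_{\sD}$. Subtracting, $e = (e+e_1) - e_1 \in \Z\Gamma_{\sD}$, as required. (The degenerate case $c$ a unit reduces $\phi$ itself to a surjection and gives $e \in \Gamma_{\sD}$ directly.)

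The one place I expect subtlety is the manipulation $\phi \mapsto \phi \cdot x$ and $\psi \mapsto \psi \cdot c$: one must carefully distinguish the post-composition $R$-action on $\sC_e^R$ from the pre-composition action by $F^e_* R$, and verify that the latter is encoded in the Cartier multiplication $\sD_e \cdot \sD_0 \subseteq \sD_e$ rather than $\sD_0 \cdot \sD_e$. Once this is in hand, the rest is essentially formal, and the proof is complete.
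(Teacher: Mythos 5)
Your proof is correct and follows essentially the same route as the paper's: the semigroup property via composability of surjections, the index identification by elementary number theory, and then the key step of producing both $e_1 \in \Gamma_{\sD}$ and $e + e_1 \in \Gamma_{\sD}$ from $F$-regularity so that $e$ lies in the group generated by $\Gamma_{\sD}$. The only difference is cosmetic: the paper evaluates $\phi \cdot \psi$ directly at $F^{e+e'}_*a$ rather than first forming $\phi' = \phi\cdot x$ to evaluate at $F^{e+e_1}_*1$, and it observes $\psi$ is already surjective (since $1$ lies in its image) rather than passing to $\psi \cdot c$; your extra maneuvers and the separate treatment of the case $c$ a unit are harmless but unnecessary.
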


\begin{proof}
Since $\sD$ is closed under multiplication and the composition of surjective maps is surjective, we have that $\Gamma_{\sD}$ is a subsemigroup of $(\Z_{\geq 0}, +)$ and thus also $n_{\sD} = \# (\Z / \Z\Gamma_{\sD})$.  For the last statement, assume now that $(R, \sD)$ is $F$-regular.  It suffices to show that $n_{\sD}$ divides $e$ whenever $\sD_e \neq 0$. Let $0 \neq \psi \in \sD_e$ with $\psi(F^e_*a)=b \neq 0$ for some $a,b \in R$. Using \mbox{$F$-regularity}, there is a $\phi \in \sD_{e'}$ with $\phi(F^{e'}_*b)=1$ for some $e'>0$. In particular, $a_{e'}^{\sD} \neq 0$ so that $n_{\sD}$ divides $e'$. Since $(\phi \cdot \psi )(F^{e+e'}_*a)=1$ it follows that $a_{e+e'}^\sD \neq 0$ and hence $n_\sD$ divides $e+e'$ as well. It now follows that $n_{\sD}$ divides $e$ as required.
\end{proof}

\begin{remark}
If $\sD$ is a Cartier subalgebra on an $F$-finite local domain
 $(R,\m)$, then $\{ \, e \in \Z_{\geq 0} \; | \; \sD_{e} \neq 0 \, \}$ is an alternative semigroup naturally associated to $\sD$.  However, this semigroup is not preserved when passing to quotients by $\sD$-compatible ideals (\textit{cf.} Lemma~\ref{lem:passtoquotients}).
Furthermore, note that it is straightforward to construct examples where $(R, \sD)$ is $F$-pure but $n_{\sD} \neq \gcd ( \{ \, e \in
\Z_{\geq 0} \; | \; \sD_{e} \neq 0 \, \})$.
\end{remark}

\begin{theorem}
  \label{thm:existence}
Suppose $\sD$ is a Cartier subalgebra on a $d$-dimensional $F$-finite local ring
 $(R,\m)$. Then the limit
\[
s(R,\sD) = \lim_{e \in \Gamma_{\sD} \to \infty}
\frac{a_{e}^{\sD}}{p^{e(d+\alpha(R))}} = \lim_{m \to \infty}
\frac{a_{mn_{\sD}}^{\sD}}{p^{mn_{\sD}(d+\alpha(R))}}
\]
exists and is called the \emph{$F$-signature} of $(R, \sD)$.
\end{theorem}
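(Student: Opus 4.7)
My plan is to begin by reducing to the complete case via Corollary~\ref{cor:completionhassamenumbers}, which preserves the splitting numbers $a_e^{\sD}$. The non-$F$-pure case is then trivial: if $(R,\sD)$ is not $F$-pure, no $\sD_e$ with $e>0$ contains a surjective map, so $a_e^{\sD}=0$ for all $e>0$, $\Gamma_{\sD}=\{0\}$, and the limit is taken to be $0$. So I may assume $(R,\sD)$ is $F$-pure, fix a surjective $\phi\in\sD_{e_0}$ (necessarily with $n_{\sD}\mid e_0$), and set $b_m := a_{mn_{\sD}}^{\sD}/p^{mn_{\sD}(d+\alpha(R))}$. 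Since $\Gamma_{\sD}$ is a numerical subsemigroup of $\Z_{\geq 0}$ with $\gcd=n_{\sD}$, it contains every sufficiently large multiple of $n_{\sD}$, so both limits in the theorem coincide and it is enough to prove that $\{b_m\}$ converges.

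Before tackling the main step I would record two preliminary inequalities. First, $b_m \leq 1$: applying Kunz's relation $\alpha(R_P)=\alpha(R)+\dim(R/P)$ at a minimal prime $P$ (Theorem~\ref{thm:kunz}) shows that the generic $R$-rank of $F^e_*R$ is $p^{e(d+\alpha(R))}$, whence so is any free rank, and in particular $a_e^{\sD}$. Second, a super-multiplicative lower bound $b_{m+k}\geq b_m b_k$: given maximal $\sD$-decompositions $F^{mn_{\sD}}_*R = R^{\oplus a_{mn_{\sD}}^{\sD}}\oplus M$ with projections $\pi_j\in\sD_{mn_{\sD}}$ and $F^{kn_{\sD}}_*R = R^{\oplus a_{kn_{\sD}}^{\sD}}\oplus N$ with projections $\tau_l\in\sD_{kn_{\sD}}$, the Cartier-algebra products $\tau_l\cdot\pi_j = \tau_l\circ F^{kn_{\sD}}_*\pi_j$ lie in $\sD_{(m+k)n_{\sD}}$ and realize $a_{mn_{\sD}}^{\sD}\cdot a_{kn_{\sD}}^{\sD}$ independent $\sD$-summands of $F^{(m+k)n_{\sD}}_*R$.

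The main technical step, and the principal obstacle, is to establish a Cauchy-style upper estimate
\[
b_{m+k} \leq b_m + \varepsilon(k), \qquad \varepsilon(k)\to 0 \text{ as } k\to\infty,
\]
without recourse to Hilbert--Kunz multiplicity (in contrast with Tucker's original argument). The plan is to start with a maximal $\sD$-decomposition $F^{(m+k)n_{\sD}}_*R = R^{\oplus a_{(m+k)n_{\sD}}^{\sD}}\oplus M'$ and factor through $F^{mn_{\sD}}_*R$ via the identification $F^{(m+k)n_{\sD}}_*R = F^{kn_{\sD}}_*(F^{mn_{\sD}}_*R)$: every $\sD$-projection $\psi\in\sD_{(m+k)n_{\sD}}$ can be sorted according to how it interacts with a fixed maximal $\sD$-decomposition of $F^{mn_{\sD}}_*R$, and the part landing outside the distinguished $R^{\oplus a_{mn_{\sD}}^{\sD}}$ summand contributes to $F^{kn_{\sD}}_*R$-maps that must factor through the ideal $I^{\sD}_{kn_{\sD}}$. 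Using Proposition~\ref{prop:charofsplittingnumbers}(a) this overcount is bounded by $p^{kn_{\sD}\alpha(R)}\length_R(R/I^{\sD}_{kn_{\sD}})$ times a factor controlled by the generic-rank bound and the descending chain $I^{\sD}_{e+e'}\subseteq I^{\sD}_{e'}$ from Lemma~\ref{lem:propsofIe} applied to $\phi$; after normalization by $p^{(m+k)n_{\sD}(d+\alpha(R))}$ this becomes $\varepsilon(k)$ of order comparable to $b_k$. The hard part is producing a clean, genuinely decaying bound on $\varepsilon(k)$ using only the Cartier-algebra internal data rather than a Hilbert--Kunz-style colength comparison. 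Once such an inequality is secured, the convergence of $\{b_m\}$ follows by a standard extraction: for any $\delta>0$ one chooses $k_0$ with $\varepsilon(k)<\delta$ for $k\geq k_0$, forcing $\limsup b_m - \liminf b_m \leq \delta$, and thus $\{b_m\}$ converges to the desired $s(R,\sD)\in[0,1]$.
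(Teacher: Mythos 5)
Your outline is sensible at the top level (reduce to the complete case, dispose of the degenerate case, aim for $\limsup \leq \liminf$), and the super-multiplicativity observation $b_{m+k} \geq b_m b_k$ is correct; but on its own it cannot give convergence of $b_m$ (a Fekete-type argument would give convergence of $b_m^{1/m}$, not of $b_m$), so all the weight rests on your ``Cauchy-style'' step.

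That step, as stated, does not work. You propose
\[
b_{m+k} \leq b_m + \varepsilon(k), \qquad \varepsilon(k) \to 0 \text{ as } k \to \infty,
\]
but this is false in general: setting $m=1$ and letting $k \to \infty$ forces $\lim_k b_{1+k} \leq b_1$, i.e.\ the $F$-signature could never exceed the first normalized splitting number $b_1$, which has no reason to hold. The error cannot be made to decay in the \emph{gap} $k$. The inequality the paper actually proves (Lemma~\ref{lem:onemapbound}) is oriented the other way and has the error decaying in the \emph{first} index: for a fixed nonzero $\phi \in \sD_e$ there is a constant $C_\phi$ with
\[
s^{\sD}_{e'} \leq s^{\sD}_{e'+be} + \frac{C_{\phi}}{p^{e'}} \qquad \text{for all } e', b,
\]
so in your notation $b_m \leq b_{m+k} + C/p^{m n_{\sD}}$. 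From this, fix $m$, let $k \to \infty$ to get $b_m \leq \liminf_n b_n + C/p^{mn_\sD}$, then let $m\to\infty$ to conclude $\limsup \leq \liminf$. The mechanism producing that error term is Lemma~\ref{lem:directsumstuff}: over a domain one builds an $R$-linear inclusion $\mu \colon F^e_*R \hookrightarrow R^{\oplus p^{e(d+\alpha(R))}}$ whose cokernel is killed by a fixed $h \neq 0$ and whose components are $F^e_*R$-multiples of $\phi$; since $\phi(F^e_* I^\sD_{e'+e}) \subseteq I^\sD_{e'}$ (Lemma~\ref{lem:propsofIe}), $\mu$ induces $F^e_*(R/I^\sD_{e'+e}) \to (R/I^\sD_{e'})^{\oplus p^{e(d+\alpha(R))}}$ with cokernel a quotient of $(R/\langle h,\m^{[p^{e'}]}\rangle)^{\oplus}$, a codimension-one object of length $O(p^{e'(d-1)})$ --- which after normalization is precisely $O(1/p^{e'})$, i.e.\ decaying in the first index. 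Your own heuristic, if carried through, would also naturally produce an error of order $1/p^{mn_\sD \cdot (\text{something})}$, not one decaying in $k$.

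Two smaller points. First, reducing to the $F$-pure case is not enough to run the key lemma: Lemma~\ref{lem:directsumstuff} requires a domain, so the paper first disposes of the non-$F$-regular case (Lemma~\ref{lem:notfreg}, where the limit is shown directly to be $0$) and is then in the $F$-regular, hence complete local domain, situation. Second, fixing a single surjective $\phi \in \sD_{e_0}$ only lets you compare indices differing by multiples of $e_0$; since $e_0$ need not equal $n_\sD$, the paper instead chooses maps in degrees generating $\Gamma_\sD$ and iterates Lemma~\ref{lem:onemapbound} across all of them.
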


\begin{remark}
  Note that $a_{e}^{\sD} \leq \rank(F^{e}_{*}R) = p^{e(d+\alpha(R))}$ for all $e >0$, so that $s(R, \sD) \leq 1$.  See also \cite[Corollary 16]{HunekeLeuschkeTwoTheoremsAboutMaximal} and \cite[Theorem 3.1]{YaoObservationsAboutTheFSignature}.
\end{remark}

The proof of Theorem~\ref{thm:existence} is based on a series of lemmas,
the first of which also features prominently in subsequent sections.

\begin{lemma}
  \label{lem:directsumstuff}
Suppose $(R,\m)$ is a $d$-dimensional $F$-finite local domain
 and $\phi \in \sC_{n} =
 \Hom_{R}(R^{1/p^{n}},R)$ is a non-zero $p^{-n}$-linear map for some $n
 \in \Z_{>0}$.
 \begin{enumerate}
 \item There exists $0 \neq h \in R$ and an inclusion of $R$-modules
   $\mu \: R^{1/p^{n}} \to R^{\oplus p^{n(d+\alpha(R))}}$ such that $\cokernel(\mu)$ is annihilated by
   $h$ and each of the $p^{n(d+\alpha(R))}$ ($R$-linear) component functions
   $R^{1/p^{n}} \to R$ of $\mu$ are $R^{1/p^{n}}$-multiples of $\phi$.
 \item Given $h$ and $\mu$ as in (a), there exists for each $m \in \Z_{>0}$ an inclusion of $R$-modules $\mu_{m} \: R^{1/p^{nm}} \to R^{\oplus p^{nm(d+\alpha(R))}}$ determined by $\mu$ such that $\cokernel(\mu_{m})$ is annihilated
 by $g = h^{2}$ and each of the $p^{nm(d+\alpha(R))}$
   ($R$-linear) component functions $R^{1/p^{nm}} \to R$ of $\mu_{m}$
   are $R^{1/p^{nm}}$-multiples of $\phi^{m}$.
 \item  For $g$ as in (b),
if $\psi \in \sC_{nm}$ for some $m \in \Z_{>0}$, then $g \cdot \psi$
is a $R^{1/p^{nm}}$-multiple of $\phi^{m}$.  In other words, there
exists $r \in R$ with $g \cdot \psi(\blank) = \phi^{m}(F^{nm}_{*}r
\cdot \blank)$.
 \end{enumerate}
\end{lemma}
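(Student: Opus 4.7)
The plan is to set up (a) using the generic freeness of $\Hom_K(K^{1/p^n}, K)$, build $\mu_m$ by iteration for (b), and deduce (c) by dualizing.

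For (a), Theorem~\ref{thm:kunz} implies that the extension $K := \Frac(R) \subset K^{1/p^n}$ is a finite (purely inseparable) field extension of degree $N := p^{n(d + \alpha(R))}$. Since any finite field extension $L/K$ has $\Hom_K(L, K)$ free of rank one over $L$, the nonzero map $\phi \otimes_R K$ generates $\Hom_K(K^{1/p^n}, K)$ as a $K^{1/p^n}$-module. Choose elements $y_1, \ldots, y_N \in R^{1/p^n}$ whose images form a $K$-basis of $K^{1/p^n}$ (clearing denominators if necessary). The maps $\phi_i := \phi(y_i \cdot \blank)$ are then $R^{1/p^n}$-multiples of $\phi$ and together form a $K$-basis of $\Hom_K(K^{1/p^n}, K)$. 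Define $\mu := (\phi_1, \ldots, \phi_N) \colon R^{1/p^n} \to R^N$. This is injective after $\otimes_R K$, hence injective (as $R^{1/p^n}$ is $R$-torsion-free), and its cokernel is a finitely generated torsion $R$-module, hence annihilated by some nonzero $h \in R$.

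For (b), I iterate $\mu$. Set $\mu_1 := \mu$ and recursively
\[
\mu_m := \mu^{\oplus N^{m-1}} \circ F^{n}_{*} \mu_{m-1} \colon R^{1/p^{nm}} \longrightarrow (R^{1/p^n})^{\oplus N^{m-1}} \longrightarrow R^{N^m},
\]
where $\mu^{\oplus N^{m-1}}$ applies $\mu$ in each of the $N^{m-1}$ slots. Using the recursive formula $\phi^m = \phi \circ F^{n}_{*} \phi^{m-1}$ coming from the Cartier algebra multiplication, an induction shows that each component of $\mu_m$ has the form $\phi^m(z_J \cdot \blank)$ for some monomial $z_J \in R^{1/p^{nm}}$ in the $y_i$ (and their higher $p$-th roots), which is manifestly an $R^{1/p^{nm}}$-multiple of $\phi^m$. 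The cokernel sits in a short exact sequence
\[
0 \longrightarrow F^{n}_{*} \coker(\mu_{m-1}) \longrightarrow \coker(\mu_m) \longrightarrow (\coker\mu)^{\oplus N^{m-1}} \longrightarrow 0,
\]
where each outer term is annihilated by $h$ (the functor $F^{n}_{*}$ preserves $h$-annihilation since $h^{p^n} \in (h)$). The main obstacle is to annihilate $\coker(\mu_m)$ uniformly by $g = h^2$ rather than by a power of $h$ growing with $m$; for this I would inductively construct compatible approximate inverses $\sigma_m \colon R^{N^m} \to R^{1/p^{nm}}$ satisfying $\sigma_m \circ \mu_m = g \cdot \id$, bootstrapping from a partial inverse $\sigma \colon R^N \to R^{1/p^n}$ of $\mu$ with $\sigma \mu = h \cdot \id$, and exploiting the compatibility of such inverses with $F^{n}_{*}$ in order to keep the scaling factor bounded by $h^2$ through each stage of the iteration.

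For (c), apply $\Hom_R(\blank, R)$ to the short exact sequence $0 \to R^{1/p^{nm}} \to R^{N^m} \to \coker(\mu_m) \to 0$ from (b). This yields the exact sequence
\[
R^{N^m} \xrightarrow{\mu_m^{*}} \sC^{R}_{nm} \longrightarrow \Ext^1_R(\coker(\mu_m), R) \longrightarrow 0,
\]
and since $g \cdot \coker(\mu_m) = 0$ forces $g \cdot \Ext^1_R(\coker(\mu_m), R) = 0$, the cokernel of $\mu_m^{*}$ is annihilated by $g$. Hence any $\psi \in \sC^{R}_{nm}$ can be written as $g\psi = \sum_J r_J \, \phi^m(z_J \cdot \blank) = \phi^m\bigl( (\sum_J r_J z_J) \cdot \blank \bigr)$ for some $r_J \in R$, and writing $\sum_J r_J z_J = F^{nm}_{*}r$ for the corresponding $r \in R$ under the set-level bijection $R \cong R^{1/p^{nm}}$ gives the desired form.
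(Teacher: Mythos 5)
Parts (a) and (c) are sound and track the paper's own argument closely. In (a) the paper passes to a dense open $\Spec R_c$ on which $R^{1/p^n}$ is free and $\phi$ generates $\Hom_{R_c}((R_c)^{1/p^n},R_c)$, then scales by a power of $c$; your clearing of denominators from a $K$-basis is the same maneuver. In (c) both arguments deduce from $g\cdot\cokernel(\mu_m)=0$ that $g\psi$ factors through $\mu_m$.

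The gap is in (b), and your sketch there is internally inconsistent. You claim that in the short exact sequence both outer terms are annihilated by $h$, citing that ``$F^n_*$ preserves $h$-annihilation since $h^{p^n}\in(h)$.'' But that applies to $F^n_*\cokernel(\mu_{m-1})$ only if you already knew $h\cdot\cokernel(\mu_{m-1})=0$, whereas the inductive hypothesis only gives $h^2\cdot\cokernel(\mu_{m-1})=0$. So either both outer terms really are killed by $h$, in which case there is no ``main obstacle'' to overcome and you are done, or (as is the case) the cited preservation fact does not apply for $m\ge 2$ and the argument does not close. The fact that does close it, and that the paper uses, is that $F^n_*$ \emph{contracts} annihilators rather than merely preserving them: if $h^2x=0$ then $h\cdot F^n_*x = F^n_*(h^{p^n}x) = F^n_*(h^{p^n-2}\cdot h^2x) = 0$, the essential point being $p^n\ge 2$. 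Thus $h^2\cdot\cokernel(\mu_{m-1})=0$ yields $h\cdot F^n_*\cokernel(\mu_{m-1})=0$; combined with $h\cdot\cokernel(\mu)=0$ and your SES this gives $h^2\cdot\cokernel(\mu_m)=0$, and the induction closes uniformly at $g=h^2$. With this in hand the approximate-inverse machinery you gesture at is superfluous, and you never actually execute it; if carried out, its ``compatibility with $F^n_*$'' would come down to the same inequality: from $\sigma_{m-1}\circ\mu_{m-1}=h^2\cdot\id$ one finds $F^n_*\sigma_{m-1}\circ\sigma^{\oplus N^{m-1}}\circ\mu_m$ is multiplication by $h^{1+2/p^n}$, which lies below $h^2$ exactly because $p^n\ge 2$, and one rescales by $h^{1-2/p^n}\in R^{1/p^n}$ to restore the constant to $h^2$.
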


\begin{proof}
Let $\delta = d + \alpha(R)$. The field of fractions $K$ of $R$ has the property that $K^{1/p^e}$ is free over $K$. Furthermore the map $\phi$ is a nonzero element, and hence a generator, of the one-dimensional $K^{1/p^e}$-vector space $\Hom_K(K^{1/p^e},K)$. This two properties hold also on an sufficiently small open subset of $\Spec R$, hence we can find $0 \neq c \in R$ such that $(R^{1/p^{n}})_{c} = (R_{c})^{1/p^{n}}$ is a free $R_{c}$-module and $\phi_{c}$ generates $\Hom_{R_{c}}((R_{c})^{1/p^{n}}, R_{c})$ as an \mbox{$(R_{c})^{1/p^{n}}$-module}.  In particular, for any $\psi \in \sC_{n}$, there is a sufficiently large $N_{\psi}$ (depending on $\psi$) such that $c^{(N_{\psi})}\cdot \psi$ is an $R^{1/p^{n}}$-multiple of $\phi$. Since $R^{1/p^{n}}$ is a torsion-free $R$-module of rank $p^{n\delta}$, one can further assume that we have an inclusion of $R$-modules $\nu \: R^{1/p^{n}} \to R^{\oplus p^{n\delta}}$ which becomes an isomorphism after localizing at $c$.  Thus, setting
\[
\mu \: R^{1/p^{n}} \to[\nu] R^{\oplus p^{n\delta}} \to[\cdot c^{M}] R^{\oplus p^{n\delta}}
\]
for $M \gg 0$, we may assume that each of the $p^{n\delta}$
($R$-linear) component functions $R^{1/pn} \to R$ of $\mu$ are
$R^{1/p^{n}}$-multiples of $\phi$ and also that $\cokernel(\mu)$ is
annihilated by $h = c^{N}$ for some $N \gg 0$. This gives (a).

Let us now show how to iterate $\mu$ to get an inclusion $\mu_{m} \: R^{1/p^{mn}} \to R^{\oplus p^{nm\delta}}$ for all $m \in \Z_{>0}$ where $\cokernel(\mu_{m})$ is annihilated by $g = c^{2N}$ and each of the component functions of $\mu_{m}$ are $R^{1/p^{nm}}$-multiples of $\phi^{m}$. We begin by setting $\mu_{1} := \mu$ and recursively define $\mu_{m+1} := (\mu^{\oplus p^{n\delta}}) \circ ( \mu_{m}^{1/p^{n}} )$, so that we have
\[
\xymatrixcolsep{6pc}
\xymatrix{
R^{1/p^{(m+1)n}} \ar@/_1.5pc/[rr]_{\mu_{m+1}} \ar[r]^-{\mu_{m}^{1/p^{n}}}& (R^{1/p^{n}})^{\oplus
  p^{nm\delta}} \ar[r]^-{\mu^{\oplus p^{n\delta}}} &
R^{\oplus p^{n(m+1)\delta}}
}.
\]
Arguing inductively, we may assume $\cokernel(\mu_{m})$ is killed by $c^{2N}$ and the component functions of $\mu_{m}$ are all $R^{1/p^{mn}}$-multiples of $\phi^{m}$.  Thus, we have that $\cokernel(\mu_{m}^{1/p^{n}})$ is killed by $c^{2N/p^{n}}$ ($\mu_{m}^{1/p^{n}}$ is $R^{1/p^{n}}$-linear) and hence also by $c^N$.  As $\cokernel(\mu)$ is annihilated by $c^{N}$, it follows immediately that $\cokernel(\mu_{m+1})$ is annihilated by $c^{2N}$. Furthermore, the component functions of $\mu_{m+1}$ are $R^{1/p^{n(m+1)\delta}}$-multiples of $\phi^{m+1}$ as they are compositions of the component functions of $\mu_{m}^{1/p^{n}}$ (which are multiples of $(\phi^{m})^{1/p^{n}}$) with the component functions of $\mu$.  This gives (b).

Finally, suppose now that $\psi \in \Hom_{R}(R^{1/p^{nm}},R)$ for some $m > 0$.  Then $g \cdot \psi$ must factor through $\mu_{m}$ as $g$ annihilates $\cokernel(\mu_{m})$, and hence $g \cdot \psi$ can be written as an $R$-linear combination of the component functions of $\mu_{m}$.  In particular, we have that $g \cdot \psi$ is a \mbox{$R^{1/p^{nm}}$-multiple} of $\phi^{m}$ for all $\psi \in \Hom_{R}(R^{1/p^{nm}},R)$.
\end{proof}

\begin{lemma}
\label{lem:onemapbound}
  Suppose $\sD$ is a Cartier subalgebra on a $d$-dimensional $F$-finite local domain
 $(R,\m)$.  Set $\delta = d + \alpha(R)$ and $s_{e'}^{\sD} = \frac{a_{e'}^{\sD}}{p^{e'\delta}} = \frac{1}{p^{e'd}}\length_{R}(R/I_{e'}^{\sD})$ for all $e' \in \Z_{>0}$.
 If $0 \neq \phi \in \sD_{e}$ for some $e \in \Z_{>0}$, then there exists a
 positive constant
 $C_{\phi} \in \Z_{>0}$ (depending only on $\phi$) such that
\[
s_{e'}^{\sD} \leq s_{e'+be}^{\sD} + \frac{C_{\phi}}{p^{e'}}
\]
for all $e' \in \Z_{>0}$ and all $b \in \Z_{>0}$.
\end{lemma}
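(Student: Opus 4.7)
The plan is to use Lemma~\ref{lem:directsumstuff}(b) together with an iterated application of Lemma~\ref{lem:propsofIe} and a Hilbert--Kunz bound on a hypersurface. The key observation is that Lemma~\ref{lem:directsumstuff}(b) produces, for each $b$, an injection $\mu_b\colon F^{be}_*R \hookrightarrow R^{\oplus p^{be\delta}}$ whose cokernel is annihilated by a single nonzero element $g = h^2$ depending only on $\phi$, and whose component maps are each of the form $F^{be}_*x \mapsto \phi^b(F^{be}_*(r_ix))$ for various $r_i \in R$. Iterating Lemma~\ref{lem:propsofIe} gives $\phi^b(F^{be}_*I_{e'+be}^\sD) \subseteq I_{e'}^\sD$, and since $I_{e'+be}^\sD$ is an ideal each component of $\mu_b$ sends $F^{be}_*I_{e'+be}^\sD$ into $I_{e'}^\sD$. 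Consequently $\mu_b$ descends to an $R$-linear map
\[
\bar\mu_b\colon F^{be}_*(R/I_{e'+be}^\sD) \to (R/I_{e'}^\sD)^{\oplus p^{be\delta}}
\]
whose cokernel, being a quotient of $\coker \mu_b$, is still killed by $g$.

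The length estimate is now a squeeze on $\image \bar\mu_b$: from above $\length_R(\image \bar\mu_b) \le p^{be\alpha(R)}\length_R(R/I_{e'+be}^\sD)$, while the annihilator condition gives $g \cdot (R/I_{e'}^\sD)^{\oplus p^{be\delta}} \subseteq \image \bar\mu_b$ and hence $\length_R(\image \bar\mu_b) \ge p^{be\delta}\length_R((gR+I_{e'}^\sD)/I_{e'}^\sD)$. Using the short exact sequence
\[
0 \to (gR+I_{e'}^\sD)/I_{e'}^\sD \to R/I_{e'}^\sD \to R/(gR+I_{e'}^\sD) \to 0
\]
to rewrite the latter and dividing through by $p^{(e'+be)d}$, one obtains
\[
s_{e'}^\sD \le s_{e'+be}^\sD + \frac{\length_R(R/(gR+I_{e'}^\sD))}{p^{e'd}}.
\]

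It remains to bound the error term uniformly. Since $\m^{[p^{e'}]} \subseteq I_{e'}^\sD$ by Lemma~\ref{lem:propsofIe}, we have $\length_R(R/(gR+I_{e'}^\sD)) \le \length_R(R/(gR+\m^{[p^{e'}]}))$; this numerator is the Hilbert--Kunz function at $e'$ of the $F$-finite local ring $R/gR$, which has dimension $d-1$ because $R$ is a domain and $g \neq 0$. Standard Hilbert--Kunz estimates give a bound of the form $Cp^{e'(d-1)}$ with $C$ depending on $R$ and $g$, so dividing by $p^{e'd}$ yields an error of $C/p^{e'}$. Setting $C_\phi = C$ finishes the proof. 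The main thing to be careful about is uniformity in $b$: the constant $C_\phi$ must not depend on $b$, and this is ensured precisely because Lemma~\ref{lem:directsumstuff}(b) yields the \emph{same} annihilator $g$ for every $\mu_b$.
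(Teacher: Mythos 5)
Your proof is correct, but it routes through part (b) of Lemma~\ref{lem:directsumstuff} rather than part (a), which is what the paper uses. The paper applies part (a) to get a single inclusion $\mu\colon F^e_*R \to R^{\oplus p^{e\delta}}$ with cokernel killed by $h$, from which it derives the one-step inequality $s_{e'}^{\sD} \leq s_{e'+e}^{\sD} + D/p^{e'}$ (where $D$ controls $\length_R(R/\langle h,\m^{[p^{e'}]}\rangle)$); it then iterates this inequality $b$ times and sums the resulting geometric series $\sum_{j<b} p^{-je}$, landing on $C_\phi = 2D$. You instead invoke part (b) to produce, for each $b$, a single map $\mu_b\colon F^{be}_*R \to R^{\oplus p^{be\delta}}$ with the \emph{uniform} annihilator $g = h^2$, and you read off the $b$-step inequality $s_{e'}^{\sD} \leq s_{e'+be}^{\sD} + C/p^{e'}$ directly, with no telescoping required. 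Both arguments are sound and the supporting details are right: $\phi^b \in \sD_{be}$ since $\sD$ is multiplicatively closed, Lemma~\ref{lem:propsofIe} applied to $\phi^b$ gives $\phi^b(F^{be}_*I_{e'+be}^{\sD}) \subseteq I_{e'}^{\sD}$, the ideal property of $I_{e'+be}^{\sD}$ pushes this to all component functions of $\mu_b$, and the Hilbert--Kunz bound on the $(d-1)$-dimensional quotient $R/gR$ supplies the $Cp^{e'(d-1)}$ error estimate. Your explicit attention to why $C_\phi$ is independent of $b$ (the uniform $g$) is exactly the right thing to flag; the paper achieves the same uniformity via the geometric series bound $(1-p^{-e})^{-1} \leq 2$. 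The trade-off is that your version avoids the iteration/series step at the cost of invoking the slightly heavier machinery of part (b), which the paper otherwise reserves for the positivity argument; the paper's version keeps the existence proof leaner by using only part (a).
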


\begin{proof}
  By Lemma~\ref{lem:directsumstuff}~(a), there exists $0 \neq h \in R$
and an inclusion $\mu \: F^{e}_{*}R \to
R^{\oplus p^{e\delta}}$ of $R$-modules  where $h$ annihilates
$\cokernel(\mu)$ and the component functions of $\mu$ are
each \mbox{$F^{e}_{*}R$-multiples} of $\phi$.  In particular, each of
these component functions is in $\sD_{e}$, and it follows from
Lemma~\ref{lem:propsofIe} that $\mu(F^{e}_{*}I_{e'+e})
\subseteq I_{e'}^{\oplus p^{e\delta}}$ for all $e' \in \Z_{>0}$.
 Thus, $\mu$ induces a homomorphism of $R$-modules
$
F^{e}_{*}(R/I_{e'+e}^{\sD})  \to (R/I_{e'}^{\sD})^{\oplus
     p^{e\delta}}$
whose cokernel is once again annihilated by $h$.  In particular,
since $\m^{[p^{e'}]} \subseteq I_{e'}^{\sD}$, this cokernel is a quotient of $(R/\langle h, \m^{[p^{e'}]} \rangle)^{\oplus
     p^{e\delta}}$.  Let $D$ be a positive constant such that $\length_{R}(R/\langle h, \m^{[p^{e'}]}\rangle) \leq Dp^{e'(d-1)}$ for all $e' \in \Z_{>0}$.  It follows that
\[
\begin{array}{rcl}
p^{e\delta} \length_{R}(R/I_{e'}^{\sD}) &=& \length_{R}\left((R/I_{e'}^{\sD})^{\oplus p^{e\delta}}\right) \\ &\leq& \length_{R}\left( F^{e}_{*}(R/I_{e'+e}^{\sD})\right) + \length_{R}\left( (R/\langle h, \m^{[p^{e'}]} \rangle)^{\oplus
     p^{e\delta}}\right) \\ & \leq & p^{e\alpha(R)} \length_{R}(R/I_{e'+e}^{\sD}) + Dp^{e\delta +e'(d-1)}
\end{array}
\]
and dividing through by $p^{e\delta + e'd}$ gives
\[
s_{e'}^{\sD} = \frac{1}{p^{e'd}}\length_{R}(R/I_{e'}^{\sD}) \leq \frac{1}{p^{(e'+e)d}}\length_{R}(R/I_{e'+e}^{\sD}) + \frac{D}{p^{e'}} = s_{e'+e}^{\sD} + \frac{D}{p^{e'}} \, \, .
\]
Iterating this inequality now gives
\[
s_{e'}^{\sD} \leq s_{e' + e}^{\sD} + \frac{D}{p^{e'}}
\leq s_{e'+2e}^{\sD} + \frac{D}{p^{e'}}\left(1 + \frac{1}{p^{e}} \right) \leq \cdots \leq s_{e'+be}^{\sD} + \frac{D}{p^{e'}}\left(1 + \frac{1}{p^{e}} + \cdots + \frac{1}{p^{(b-1)e}} \right) \, \, .
\]
Thus, setting $C_{\phi} = 2D$, we have
\[
s_{e'}^{\sD} \leq s_{e' + be}^{\sD} + \frac{D}{p^{e'}}\left( \frac{1}{1-\frac{1}{p^{e}}}\right) = s_{e' + be}^{\sD} + \frac{D}{p^{e'}}\left( 1 + \frac{1}{p^{e}-1}\right) \leq s^{\sD}_{e' + be} + \frac{C_{\phi}}{p^{e'}}
\]
as desired.
\end{proof}
Before proceeding with the proof of the existence theorem we now quickly show that the $F$-signature is zero if $(R,\sD)$ is not $F$-regular.
\begin{lemma}
\label{lem:notfreg}
  Suppose $\sD$ is a Cartier subalgebra on a $d$-dimensional $F$-finite local ring
 $(R,\m)$.  If $(R, \sD)$ is not $F$-regular, then there exists a positive constant $C \in \Z_{>0}$ such that
\[
\frac{a_{e}^{\sD}}{p^{e(d+\alpha(R))}} = \frac{1}{p^{ed}}\length_{R}(R/I_{e}^{\sD}) \leq \frac{C}{p^{e}}
\]
for all $e \in \Z_{>0}$.  In particular, $\displaystyle s(R, \sD) = \lim_{e \in \Gamma_{\sD}} \frac{a_{e}^{\sD}}{p^{e(d+\alpha(R))}} = 0$.
\end{lemma}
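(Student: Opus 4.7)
My plan is to extract from the failure of $F$-regularity a single element $c \in R$ that lies in \emph{every} ideal $I_e^{\sD}$, and then to exploit the dimension drop obtained by cutting down by $c$ to bound $\length_R(R/I_e^{\sD})$ by a constant times $p^{e(d-1)}$.

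First I will unpack the negation of $F$-regularity: by definition, there exists some $c \in R$, not contained in any minimal prime, for which no $\phi \in \sD_e$ satisfies $\phi(F^e_*c) = 1$. Because $\sD_0 = R$ acts on each $\sD_e$ by scalar multiplication, if $\phi(F^e_*c)$ were a unit $u$ for some $\phi \in \sD_e$, then $u^{-1}\phi \in \sD_e$ would send $F^e_*c$ to $1$; hence $\phi(F^e_*c) \in \m$ for every $e > 0$ and every $\phi \in \sD_e$, which is precisely the statement $c \in I_e^{\sD}$ for all $e > 0$. Together with the inclusion $\m^{[p^e]} \subseteq I_e^{\sD}$ from Lemma~\ref{lem:propsofIe}, this yields $(c) + \m^{[p^e]} \subseteq I_e^{\sD}$ for every $e > 0$.

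Next, since $c$ avoids all minimal primes of $R$, every minimal prime of $R/cR$ strictly contains some minimal prime of $R$, so $\dim(R/cR) \leq d-1$. Consequently I can choose $x_1, \ldots, x_{d-1} \in \m$ for which $(c, x_1, \ldots, x_{d-1})$ is $\m$-primary. Since $x_i \in \m$ implies $x_i^{p^e} \in \m^{[p^e]}$, I obtain
\[
\length_R(R/I_e^{\sD}) \;\leq\; \length_R\bigl(R/(c, x_1^{p^e}, \ldots, x_{d-1}^{p^e})\bigr) \;=\; \length_S\bigl(S/J^{[p^e]}\bigr),
\]
where $S := R/cR$ and $J = (\bar x_1, \ldots, \bar x_{d-1})$ is an $\m_S$-primary ideal of $S$. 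A standard pigeonhole argument shows $J^{(d-1)(p^e-1)+1} \subseteq J^{[p^e]}$, and the Hilbert--Samuel polynomial of $J$ in $S$ (which has dimension at most $d-1$) then bounds the right-hand side above by $C p^{e(d-1)}$ for a constant $C$ independent of $e$.

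Dividing through by $p^{ed}$ and using the identity $a_e^{\sD}/p^{e(d+\alpha(R))} = \length_R(R/I_e^{\sD})/p^{ed}$ from Proposition~\ref{prop:charofsplittingnumbers}(a) yields the asserted bound $a_e^{\sD}/p^{e(d+\alpha(R))} \leq C/p^e$, which forces $s(R, \sD) = 0$. The heart of the argument lies in the first step --- extracting a single $c \in \bigcap_{e>0} I_e^{\sD}$ from the failure of $F$-regularity; once this is done, the rest is a routine dimension-drop length estimate that requires no Hilbert--Kunz machinery.
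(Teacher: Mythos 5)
Your proof is correct and follows the same approach as the paper: both extract from the failure of $F$-regularity a single element $c \in R$ avoiding all minimal primes with $c \in I_e^{\sD}$ for every $e$, and then use the resulting dimension drop to bound $\length_R(R/I_e^{\sD})$ by a constant times $p^{e(d-1)}$. The only difference is cosmetic: the paper simply asserts $\length_R(R/\langle \m^{[p^e]}, c\rangle) \leq Cp^{e(d-1)}$, whereas you spell out that standard estimate via a system of parameters mod $c$, pigeonhole, and the Hilbert--Samuel polynomial.
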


\begin{proof}
As $(R, \sD)$ is not $F$-regular, there is some $c \in R$ not
  contained in any minimal prime with $c \in I_{e}^{\sD}$ for all $e
  \in \Z_{>0}$.  In particular, $\length_{R}(R/I_{e}^{\sD}) \leq
  \length_{R}(R/\langle \m^{[p^{e}]}, c \rangle) \leq C p^{e(d-1)}$
  for some positive constant $C$.  Dividing through by $p^{ed}$ gives the desired inequality, and it follows immediately that the limit $ s(R, \sD) = \lim_{e \in \Gamma_{\sD}} \frac{a_{e}^{\sD}}{p^{e(d+\alpha(R))}}$ exists and equals zero.
\end{proof}

\newcommand{\notk}{t}
\begin{proof}[Proof of Theorem~\ref{thm:existence}]
Set $\delta = d + \alpha(R)$ and $s_{e'}^{\sD} = \frac{a_{e'}^{\sD}}{p^{e'\delta}} = \frac{1}{p^{e'd}}\length_{R}(R/I_{e'}^{\sD})$ for all $e' \in \Z_{>0}$.  By Lemma~\ref{lem:notfreg} above, we are free to assume that $(R, \sD)$ is $F$-regular so that $R$ is in fact a domain.
Let $e_{1}, \ldots, e_{\notk} \in \Z$ be a set of generators for $\Gamma_{\sD}$ and $M \gg 0$ so that $mn_{\sD} \in \Gamma_{\sD}$ for all $m \geq M$.
For each $i = 1, \ldots, \notk$, choose a non-zero $p^{-e_{i}}$-linear
map $\phi_{i} \in \sD_{e_{i}}$. By
Lemma~\ref{lem:onemapbound}, there exists constants $C_{\phi_{i}} \in \Z_{>0}$
with $s_{e'}^{\sD} \leq s_{e' + be_{i}}^{\sD} +\frac{C_{\phi_{i}}}{p^{e'}}$ for all $e',b \in \Z_{>0}$.  Let $D = \max\{C_{\phi_{1}}, \ldots, C_{\phi_{\notk}}\}$ and set $C = 2D$. Consider now any $e \in \Gamma_{\sD}$, and write $e = \sum_{i = 1}^{\notk} b_{i}e_{i}$ for some $b_{i} \in \Z_{\geq 0}$. We have
\[
\begin{array}{rcl}
  s_{e'}^{\sD} &\leq& \displaystyle s_{e'+b_{1}e_{1}}^{\sD} +
  \frac{D}{p^{e'}} \\
&\leq & \displaystyle s_{e'+b_{1}e_{1} + b_{2}e_{2}}^{\sD} +
\frac{D}{p^{e'}}\left( 1 + \frac{1}{p^{b_{1}e_{1}}} \right) \medskip \\
&\vdots & \quad \quad \qquad \vdots \quad \qquad \qquad \vdots
\medskip \\
& \leq & \displaystyle s_{e' +  b_{1}e_{1}+ \cdots + b_{\notk}e_{\notk}}^{\sD} + \frac{D}{p^{e'}}\left(
  1 + \frac{1}{p^{b_{i}e_{i}}} + \cdots + \frac{1}{p^{b_{1}e_{1} +
      \cdots + b_{\notk-1}e_{\notk-1}}} \right) \quad \leq \quad s_{e +
  e'}^{\sD} + \frac{C}{p^{e'}} \, \, .
\end{array}
\]
In particular, if $m_{1} \in \Z_{> 0}$ and $m_{2} \in \Z_{\geq M}$, we have
\[
s_{m_{1}n_{\sD}}^{\sD} \leq s^{\sD}_{(m_{1}+m_{2})n_{\sD}} +
\frac{C}{p^{m_{1}n_{\sD}}} \, \, .
\]
Considering $m_{1}$ fixed and letting $m_{2} \to \infty$, we may conclude
\[
s_{m_{1}n_{1}}^{\sD} \leq \liminf_{m \to \infty} s^{\sD}_{mn_{\sD}} + \frac{C}{p^{m_{1}n_{\sD}}}
\]
for all $m_{1} \in \Z_{> 0}$.  Now, letting $m_{1} \to \infty$ gives
\[
\limsup_{m \to \infty} s_{mn_{\sD}}^{\sD} \leq \liminf_{m \to \infty}
s^{\sD}_{mn_{\sD}}
\]
from which it follows that the desired limit
\[
\lim_{m \to \infty}
s_{mn_{\sD}}^{\sD} = \lim_{m \to \infty}
\frac{a_{mn_{\sD}}^{\sD}}{p^{mn_{\sD}(d+\alpha(R))}} = \lim_{e \in \Gamma_{\sD} \to \infty}
\frac{a_{e}^{\sD}}{p^{e(d+\alpha(R))}} = s(R, \sD)
\]
exists.
\end{proof}

\begin{remark}
  While Theorem~\ref{thm:existence} recovers the existence results of \cite{TuckerFSignatureExists} (using similar methods), the above proof is notable in that it does not make use of Hilbert-Kunz multiplicity.  Nonetheless, \textit{a posteriori}, the following result follows immediately from \cite[Corollary 3.7]{TuckerFSignatureExists} after establishing the existence of the $F$-signature limit.
\end{remark}

\begin{corollary}
  Suppose $\sD$ is a Cartier subalgebra on a $d$-dimensional $F$-finite local ring
 $(R,\m)$.  Then
\[
s(R, \sD) = \lim_{e \in \Gamma_{\sD}\to \infty} \frac{1}{p^{ed}} e_{HK}( I_{e}^{\sD})
\]
\end{corollary}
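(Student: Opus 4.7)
The plan is to combine the existence of the $F$-signature limit established in Theorem~\ref{thm:existence} with the cited comparison result \cite[Corollary 3.7]{TuckerFSignatureExists}; the essential content has already been extracted, so what remains is a routine matter of interpretation.

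First, I would use Proposition~\ref{prop:charofsplittingnumbers}(a) to rewrite the normalized splitting numbers as colengths,
\[
\frac{a_e^{\sD}}{p^{e(d+\alpha(R))}} \;=\; \frac{1}{p^{ed}}\length_R(R/I_e^{\sD}),
\]
so that Theorem~\ref{thm:existence} immediately gives $\lim_{e \in \Gamma_{\sD}\to\infty} p^{-ed}\length_R(R/I_e^{\sD}) = s(R,\sD)$. The problem therefore reduces to the claim that replacing $\length_R(R/I_e^{\sD})$ by the Hilbert--Kunz multiplicity $e_{HK}(I_e^{\sD})$ does not alter the value of the limit, i.e.\ that
\[
\frac{1}{p^{ed}}\bigl|\length_R(R/I_e^{\sD}) - e_{HK}(I_e^{\sD})\bigr| \longrightarrow 0.
\]

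To secure this, I would appeal directly to \cite[Corollary 3.7]{TuckerFSignatureExists}, whose hypothesis is that one has a sequence of $\m$-primary ideals $\{J_e\}$ satisfying $\m^{[p^e]} \subseteq J_e$; under this ``Frobenius compatibility'' the length and the Hilbert--Kunz multiplicity of $J_e$ agree asymptotically after dividing by $p^{ed}$. In our case the required containment $\m^{[p^e]} \subseteq I_e^{\sD}$ is precisely what Lemma~\ref{lem:propsofIe} records, so Tucker's criterion applies verbatim with $J_e = I_e^{\sD}$ and produces the needed comparison.

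Since the substantive step, the existence of the limit, has already been proved in Theorem~\ref{thm:existence}, I anticipate no genuine obstacle: the only verification needed is the Frobenius-power containment from Lemma~\ref{lem:propsofIe}, after which Tucker's corollary supplies the rest. Essentially the proof is a three-line combination of Proposition~\ref{prop:charofsplittingnumbers}(a), Theorem~\ref{thm:existence}, Lemma~\ref{lem:propsofIe}, and \cite[Corollary 3.7]{TuckerFSignatureExists}.
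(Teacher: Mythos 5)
Your proposal matches the paper's approach exactly: the paper offers no separate proof of this corollary, relying instead on the preceding remark that it follows from \cite[Corollary 3.7]{TuckerFSignatureExists} once the existence of the limit (Theorem~\ref{thm:existence}) is in hand, together with the identification $a_e^{\sD}/p^{e(d+\alpha(R))} = \length_R(R/I_e^{\sD})/p^{ed}$ from Proposition~\ref{prop:charofsplittingnumbers}(a), and the containment $\m^{[p^e]} \subseteq I_e^{\sD}$ from Lemma~\ref{lem:propsofIe}. One small caution: you call the containment $\m^{[p^e]}\subseteq J_e$ ``Frobenius compatibility,'' but that phrase conventionally refers to the chain condition $J_e^{[p]}\subseteq J_{e+1}$ (which Tucker uses to establish existence of the limit in the classical case, and which need \emph{not} hold for $I_e^{\sD}$ with a general Cartier subalgebra $\sD$); the hypothesis actually being invoked here is only $\m$-primariness via $\m^{[p^e]}\subseteq I_e^{\sD}$, with the existence of the length-limit supplied separately by Theorem~\ref{thm:existence}.
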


\subsection{Positivity}
\label{sec:gener-f-sign-pos}
The main result of this section, stated immediately below, is an important justification of our generalized definition of $F$-signature.

\begin{theorem}
  \label{thm:positivity}
Suppose $\sD$ is a Cartier subalgebra on a $d$-dimensional $F$-finite local ring
 $(R,\m)$.  Then the $F$-signature of $(R,
\sD)$ is positive if and only if  $(R, \sD)$ is $F$-regular.
\end{theorem}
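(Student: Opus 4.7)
The plan is to address each implication separately. The implication $s(R,\sD)>0 \Rightarrow (R,\sD)$ is $F$-regular is a direct contrapositive of Lemma~\ref{lem:notfreg}. For the converse, I assume $(R,\sD)$ is $F$-regular, set $\delta := d + \alpha(R)$, and aim to produce an explicit positive lower bound for $s(R,\sD)$.

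Since $R$ is a normal Cohen-Macaulay domain under the $F$-regularity hypothesis, Lemma~\ref{lem:directsumstuff} applies. Applying $F$-regularity first with $c = 1$, I select a surjective $\phi \in \sD_{e_0}$ for some $e_0 > 0$. Lemma~\ref{lem:directsumstuff}(b) then yields a non-zero $g \in R$ and, for each $m \in \Z_{>0}$, an injection $\mu_m \: F^{me_0}_{*}R \into R^{\oplus N_m}$ with $N_m := p^{me_0\delta}$, whose cokernel is annihilated by $g$ and whose $N_m$ component functions $\mu_m^{(i)}$ take the form $\mu_m^{(i)}(\blank) = \phi^m(F^{me_0}_{*}r_i \cdot \blank)$ for some $r_i \in R$. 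Applying $F$-regularity a second time, now to $g$, I obtain $e_g > 0$ and $\psi \in \sD_{e_g}$ with $\psi(F^{e_g}_{*}g) = 1$. These two appeals to $F$-regularity are the only places the hypothesis is used.

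Setting $e := me_0 + e_g$, I form the composition
\[
\Theta_m \: F^e_{*}R \xrightarrow{\;F^{e_g}_{*}\mu_m\;} (F^{e_g}_{*}R)^{\oplus N_m} \xrightarrow{\;\psi^{\oplus N_m}\;} R^{\oplus N_m}.
\]
A routine computation identifies the $i$-th component of $\Theta_m$ with $(\psi \cdot \phi^m)(F^e_{*}r_i \cdot \blank)$, which lies in $\sD_e$. Moreover, $\Theta_m$ is surjective: since $g$ annihilates $\cokernel(\mu_m)$, each standard basis vector $\mathbf{e}_i \in R^{\oplus N_m}$ lifts to some $x_i \in F^{me_0}_{*}R$ with $\mu_m(x_i) = g\mathbf{e}_i$, and then $\Theta_m(F^{e_g}_{*}x_i) = \psi(F^{e_g}_{*}g)\cdot\mathbf{e}_i = \mathbf{e}_i$. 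Being a surjection onto a free module, $\Theta_m$ splits, giving $F^e_{*}R \simeq R^{\oplus N_m} \oplus \ker(\Theta_m)$ where the projection onto each free factor is the corresponding component of $\Theta_m$, and hence an element of $\sD_e$. Therefore $a_e^{\sD} \geq N_m$, so
\[
\frac{a_e^{\sD}}{p^{e\delta}} \;\geq\; \frac{p^{me_0\delta}}{p^{(me_0+e_g)\delta}} \;=\; \frac{1}{p^{e_g\delta}}
\]
for every $m > 0$, and sending $m \to \infty$ through $\{me_0 + e_g\}_{m \geq 1} \subseteq \Gamma_{\sD}$ yields $s(R,\sD) \geq p^{-e_g\delta} > 0$.

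The main obstacle is the construction of the surjection $\Theta_m$. Lemma~\ref{lem:directsumstuff} alone produces only an embedding into a free module with cokernel killed by some non-zero $g$, and it is precisely the full strength of $F$-regularity (not merely $F$-purity) applied to this $g$ that absorbs the obstruction via post-composition with $\psi$, while keeping all $N_m$ resulting projections inside $\sD$.
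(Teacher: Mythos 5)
Your proof is correct, and it takes a genuinely different and considerably more elementary route than the paper's. The paper first reduces to a complete local domain via Corollary~\ref{cor:completionhassamenumbers}, fixes a surjective $\phi \in \sD_n$, uses $\bigcap_e I_e^{\sD} = P_{\sD} = 0$ together with Chevalley's lemma to find $m_0$ with $I_{nm_0}^{\sD} \subseteq \m^r$, and then invokes Theorem~\ref{thm:keylemmaforpositivity} --- which rests on the deep Hochster--Huneke valuation result over $R^+$ together with Rees's asymptotic theory --- to force $I_{n(m+m_0)}^{\sD} \subseteq \m^{\lfloor p^{nm - e_0}\rfloor}$, finally comparing colengths against the Hilbert--Samuel multiplicity. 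The full strength of $F$-regularity is used there only through the vanishing of $P_{\sD}$. You instead apply $F$-regularity a second time directly to the single annihilator $g$ from Lemma~\ref{lem:directsumstuff} to obtain $\psi \in \sD_{e_g}$ with $\psi(F^{e_g}_*g) = 1$, and post-compose $F^{e_g}_*\mu_m$ with $\psi^{\oplus N_m}$ to manufacture an honest surjection onto a free module whose component projections all lie in $\sD_e$ (since $\psi\cdot\phi^m\cdot r_i \in \sD_e$). This bypasses $R^+$, valuation theory, Chevalley's lemma, and the passage to the completion entirely, and it yields the explicit clean lower bound $s(R,\sD) \ge p^{-e_g(d+\alpha(R))}$ directly from the splitting data. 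Both arguments rely on Lemma~\ref{lem:directsumstuff} and on the existence of the limit from Theorem~\ref{thm:existence}; yours trades the valuation-theoretic input for a second, stronger appeal to the $F$-regularity hypothesis.
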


\begin{remark}
  The above Theorem recovers and vastly generalizes the result of I. Aberbach and G. Leuschke
in \cite{AberbachLeuschke} showing that the positivity of the
$F$-signature of a ring is equivalent to $F$-regularity.  Even in this
classical setting, however, our proof is inherently quite different.
Nevertheless, many of the
ideas to follow are rooted in the arguments they present.
\end{remark}

If $R$ is a domain, recall that $R^{+}$ denotes
the absolute integral closure of $R$, \textit{i.e.} the integral
closure of $R$ inside an algebraic closure of its fraction field.
Before proceeding, we first state a needed result of
M. Hochster and C. Huneke,  \cite[Theorem 3.3]{HochsterHunekeTightClosureElementsSmallOrder}, whose proof relies heavily on the use of tight
closure techniques.
Our formulation varies from their precise statement
only in that we explicitly identify (by tracing through the
original argument using a fixed system of parameters
  $x_{1}, \ldots, x_{d}$) the
  constant $\beta$ which is merely stated to exist in their presentation.

\begin{theorem}
\label{thm:hochhunvalsthm}
  \cite[Theorem 3.3]{HochsterHunekeTightClosureElementsSmallOrder}
Let $(R,\m)$ be a
  complete local domain and $\nu$ a $\Q$-valued valuation on $R^{+}$
  non-negative on $R$ (and, hence, on $R^{+}$) and positive on $\m$
  (and, hence, on $\m^{+}$). Suppose that $x_{1}, \ldots, x_{d}$ is a system of
  parameters and set $\beta := \min_{i} \nu(x_{i}) $.  Then there exists a
  positive integer $r$ such that for every element $u$ of $R^{+}$ with
  $\nu(u) < \beta$ there is an $R$-linear map $\Theta \: R^{+} \to R$
  where $\Theta(u) \not\in \m^{r}$.
\end{theorem}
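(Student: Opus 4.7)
The plan is to revisit the original Hochster--Huneke proof in \cite{HochsterHunekeTightClosureElementsSmallOrder} and trace through the single point at which the hypothesis on $\nu(u)$ is used, so as to identify the threshold explicitly as $\beta = \min_{i} \nu(x_{i})$. No new mathematical content is required beyond what appears in the original paper; the task is essentially one of bookkeeping.

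The first step is the only valuative computation in the entire proof. I would show that if $\nu(u) < \beta$, then $u^{p^{e}} \notin (x_{1}^{p^{e}}, \ldots, x_{d}^{p^{e}}) R^{+}$ for every $e \geq 0$. Extending $\nu$ to $R^{+}$ in the standard way and supposing an expression $u^{p^{e}} = \sum_{i} r_{i} x_{i}^{p^{e}}$ with $r_{i} \in R^{+}$, one computes
\[
p^{e} \nu(u) \;=\; \nu(u^{p^{e}}) \;\geq\; \min_{i}\bigl(\nu(r_{i}) + p^{e}\nu(x_{i})\bigr) \;\geq\; p^{e}\beta,
\]
contradicting $\nu(u) < \beta$. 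This is the step that explicitly introduces $\min_{i} \nu(x_{i})$ as the relevant constant.

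From here, the remainder of the original argument makes no further reference to $\nu$. Using that $R^{+}$ is a big Cohen--Macaulay $R$-algebra, the classes $[u^{p^{e}} / (x_{1} \cdots x_{d})^{p^{e}}]$ are nonzero in $H^{d}_{\m}(R^{+})$ for every $e$. Matlis duality for the complete local ring $R$, combined with a fixed parameter test element $c \in R$, then produces for each $e$ an $R$-linear map $\Theta_{e} \: R^{+} \to R$ with $\Theta_{e}(u^{p^{e}}) \notin \m^{r_{0}}$, where $r_{0}$ depends only on $c$ and the parameters $x_{1}, \ldots, x_{d}$. A Frobenius-descent step (extracting $p^{e}$-th roots inside $R^{+}$) then converts the family $\{\Theta_{e}\}$ into a single map $\Theta \: R^{+} \to R$ with $\Theta(u) \notin \m^{r}$ for a suitable $r$ depending only on $r_{0}$ and the ambient data.

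The main obstacle, already present and resolved in the original proof, is the uniformity of $r$ in $u$: one needs the same integer $r$ to work for every $u$ with $\nu(u) < \beta$. This is exactly what the fixed parameter test element $c$ secures, since its action governs the maps $\Theta_{e}$ uniformly regardless of which $u$ appears. My only additional contribution here is to confirm that the strict inequality $\nu(u) < \min_{i} \nu(x_{i})$ suffices at each step of the original argument, thereby justifying the explicit value of $\beta$ displayed in the statement.
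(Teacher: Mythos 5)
The paper does not actually prove this statement: it is quoted verbatim (up to the identification of $\beta$) from Hochster--Huneke, and the authors' only contribution is the remark that tracing the original argument with a fixed system of parameters shows one may take $\beta = \min_i \nu(x_i)$. Your proposal adopts exactly the same stance, and you correctly isolate the unique point where $\beta$ enters: the valuation computation showing that $\nu(u) < \beta$ forces $u^{p^e} \notin (x_1^{p^e}, \ldots, x_d^{p^e})R^{+}$, since a relation $u^{p^e} = \sum_i r_i x_i^{p^e}$ with $\nu(r_i) \geq 0$ yields $p^e\nu(u) \geq p^e\beta$. That computation is precisely the justification the paper has in mind for its reformulation, so in substance your proposal matches the paper's treatment.

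One caution about your reconstruction of the rest of the cited argument. The original proof is, as the paper itself notes, a tight-closure argument (test elements and colon-capturing applied to the module-finite extensions of $R$ inside $R^{+}$ containing $u$), not one routed through the big Cohen--Macaulay property of $R^{+}$; and the concluding ``Frobenius-descent'' step as you describe it is not coherent --- if a map $\Theta_0$ with $\Theta_0(u) \notin \m^{r_0}$ were already available at $e=0$ there would be nothing to descend, and precomposing with $p^{e}$-th roots does not preserve $R$-linearity. The genuinely delicate points, namely producing a single $R$-linear map defined on all of $R^{+}$ rather than on one module-finite extension, and the uniformity of $r$ over all such extensions and all admissible $u$, are exactly where the tight-closure machinery does its work; your sketch treats them as a black box. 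Since the paper does the same, this does not undermine your identification of $\beta$, but you should not present the sketch as a faithful trace of the original proof.
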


The main ingredient in the proof of Theorem~\ref{thm:positivity} is
the following result, which should be compared with \cite[Proposition 2.4]{AberbachExtensionofFRegularRingsbyFlatMaps}.

\begin{theorem}
  \label{thm:keylemmaforpositivity}
Suppose $(R,\m)$ is a $d$-dimensional $F$-finite complete local domain
 and $\phi \in \sC_{n} =
 \Hom_{R}(R^{1/p^{n}},R)$ is a non-zero $p^{-n}$-linear map for some $n
 \in \Z_{>0}$.  Then there exist $r, e_{0} \in \Z_{>0}$
with the following
property: for each $m \in \Z_{>0}$, every ideal $J$
with $\phi^{m}(J^{1/p^{mn}}) \subseteq \m^{r}$ satisfies $J \subseteq
\m^{\lfloor p^{mn - e_{0}} \rfloor}$.
\end{theorem}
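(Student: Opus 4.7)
The plan is to argue by passing to the absolute integral closure $R^{+}$ and combining Lemma~\ref{lem:directsumstuff}(c), the Hochster-Huneke theorem (Theorem~\ref{thm:hochhunvalsthm}), and an Izumi-type linear comparison between a valuation centered at $\m$ and the $\m$-adic order. First, I would apply Lemma~\ref{lem:directsumstuff}(c) to produce a nonzero element $g \in R$ depending only on $\phi$ such that, for every $m \in \Z_{>0}$ and every $\psi \in \sC_{nm}$, there exists $s \in R$ (depending on $\psi$) with $g \cdot \psi(z) = \phi^{m}(s^{1/p^{nm}} z)$ for all $z \in R^{1/p^{nm}}$. Since every $R$-linear map $\Theta \: R^{+} \to R$ restricts to an element of $\sC_{nm}$, the same identity holds with $\Theta$ in place of $\psi$.

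Next, fix a divisorial $\Q$-valued valuation $\nu$ on $R^{+}$ that is non-negative on $R$ and positive on $\m$, choose a system of parameters $x_{1}, \ldots, x_{d}$, and set $\beta := \min_{i} \nu(x_{i}) > 0$. Theorem~\ref{thm:hochhunvalsthm} then supplies an integer $r_{0} > 0$ such that every $u \in R^{+}$ with $\nu(u) < \beta$ admits an $R$-linear map $\Theta \: R^{+} \to R$ with $\Theta(u) \notin \m^{r_{0}}$. Since $R$ is a domain and $g \neq 0$, the Artin-Rees lemma produces an integer $c \geq 0$ with $(\m^{k} : g) \subseteq \m^{k - c}$ for all $k \geq c$. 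I would take $r := r_{0} + c$ as the integer asserted by the theorem.

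With these choices, suppose $J \subseteq R$ satisfies $\phi^{m}(J^{1/p^{mn}}) \subseteq \m^{r}$. For any $y \in J$ and any $R$-linear $\Theta \: R^{+} \to R$, the first step furnishes $s \in R$ with
\[
g \cdot \Theta(y^{1/p^{mn}}) \;=\; \phi^{m}\bigl((sy)^{1/p^{mn}}\bigr) \;\in\; \phi^{m}(J^{1/p^{mn}}) \;\subseteq\; \m^{r},
\]
because $sy \in J$. Consequently $\Theta(y^{1/p^{mn}}) \in (\m^{r} : g) \subseteq \m^{r_{0}}$ for every such $\Theta$, and the contrapositive of the Hochster-Huneke theorem yields $\nu(y^{1/p^{mn}}) \geq \beta$, i.e.\ $\nu(y) \geq \beta p^{mn}$ for every $y \in J$.

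The remaining step is to convert this uniform valuation bound into an $\m$-adic containment. Here the natural input is Izumi's theorem: since $R$ is a complete (hence analytically irreducible) local domain and $\nu$ is a divisorial valuation centered on $\m$, there exists a constant $C > 0$ with $\ord_{\m}(y) \geq \nu(y)/C$ for all nonzero $y \in R$. Choosing $e_{0} \in \Z_{>0}$ large enough that $p^{e_{0}} \geq C/\beta$ then gives $\ord_{\m}(y) \geq \beta p^{mn}/C \geq p^{mn - e_{0}}$ whenever $mn \geq e_{0}$ (the conclusion being vacuous for $mn < e_{0}$), which delivers $J \subseteq \m^{\lfloor p^{mn - e_{0}}\rfloor}$. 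The main obstacle is precisely this final step: the Izumi-type linear comparison between $\nu$ and $\ord_{\m}$ is the nontrivial input, while the preceding arguments amount to bookkeeping with the tools already assembled in the excerpt.
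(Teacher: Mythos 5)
Your argument is correct, but it takes a route that differs from the paper's in two ways, both worth noting. First, you handle the auxiliary element $g$ from Lemma~\ref{lem:directsumstuff}(c) via Artin--Rees (taking $r = r_0 + c$ so that $(\m^r : g) \subseteq \m^{r_0}$), whereas the paper avoids Artin--Rees entirely by choosing the system of parameters deeply enough that $\beta_i > \nu_i(g)$ and then applying Theorem~\ref{thm:hochhunvalsthm} to the element $g \cdot h^{1/p^{nm}}$ rather than to $h^{1/p^{nm}}$. Second, and more substantially, the paper never invokes Izumi's theorem: it runs the Hochster--Huneke argument simultaneously for the (finitely many) Rees valuations $\nu_1, \ldots, \nu_m$ of $\m$, which by Rees' valuation theorem satisfy $\bar{\ord}_\m = \min_i \nu_i/N_i$, and then passes from $\bar{\ord}_\m$ to $\ord_\m$ via Rees' linear-equivalence bound $\ord_\m \geq \bar{\ord}_\m - L - 1$. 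You instead fix a single divisorial valuation $\nu$ and invoke the Izumi-type comparison $\nu(y) \leq C\,\ord_\m(y)$. That bound is correct for a complete (hence analytically irreducible) local domain, but one should be aware that its proof is not a bare citation of Izumi's theorem: one needs Izumi to compare $\nu$ with each Rees valuation, Rees' valuation theorem to pass to $\bar{\ord}_\m$, and Rees' linear equivalence to get to $\ord_\m$, after which the additive constant is absorbed multiplicatively using $\ord_\m(y) \geq 1$ for $y \in \m$. So your route buys a cleaner statement (one valuation, one application of Hochster--Huneke, an off-the-shelf Artin--Rees containment) at the cost of citing a deeper result, while the paper's route stays entirely inside the Rees framework and is more self-contained. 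Both are valid proofs.
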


\begin{proof}
Let $0 \neq g \in R$ satisfy the conclusion of Lemma~\ref{lem:directsumstuff}~(c) for $\phi$, so that for all $m \in \Z_{>0}$ and $\psi \in \sC_{nm} = \Hom_{R}(R^{1/p^{nm}},R)$ we have that $g \cdot \psi$
is a $R^{1/p^{nm}}$-multiple of $\phi^{m}$.
Following the argument of \cite[Proposition
2.4]{AberbachExtensionofFRegularRingsbyFlatMaps}, set $f(x)$ to be the largest power of $\m$ containing a given $x \in
  R$ and put $\bold{f}(x) = \lim_{n\to \infty} \frac{1}{n}f(x^{n})$.
  By the valuation theorem of Rees \cite[Theorem
  4.16]{ReesAsymptoticTheoryofIdeals}, there exists a finite number of
  $\Z$-valued valuations $\nu_{1}, \ldots, \nu_{m}$ on $R$ which are
non-negative on $R$ and positive on $\m$ together with positive
rational numbers $N_{1}, \ldots, N_{m}$ such that $\bold{f}(x) =
\min_{i} \nu_{i}(x)/N_{i} $.  Further applying \cite[Theorem
  5.32]{ReesAsymptoticTheoryofIdeals}, there is a constant $L \in
  \Z_{\geq 0}$ such that
$
f(x) \leq \lfloor \bold{f}(x) \rfloor \leq f(x) + L
$
for all $x \in R$.  In particular, $f(x) \geq \bold{f}(x) - L - 1$.

We can find a system of parameters $x_{1}, \ldots, x_{d}$ of $R$ such that
$
\beta_{i} := \min\nolimits_{j} \nu_{i}(x_{j}) > \nu_{i}(g)
$
for all $i$.  Indeed, since each $\nu_{i}$ is positive on
$\m$,  this can be achieved by taking sufficiently
large powers of any given system of parameters.  Now, each $\nu_{i}$
extends (non-uniquely) to a
\mbox{$\Q$-valued} valuation (also denoted $\nu_{i}$) on $R^{+}$ which is non-negative on $R^{+}$ and
positive on $\m^{+}$.
For each $\nu_{i}$ and $\beta_{i}$, we can find $r_{i} \in
\N$ satisfying the  conclusion of Theorem~\ref{thm:hochhunvalsthm}.  Set $r :=
\max_{i} r_{i}$.

\begin{claim*}
  If $h \in R$ satisfies $\phi^{m}((hR)^{1/p^{nm}}) \subseteq
  \m^{r}$ for some $m \in \Z_{>0}$, then for all $i$ we must have
$
\nu_{i}(h) \geq (\beta_{i} - \nu_{i}(g)) p^{nm}$.
\end{claim*}

\noindent
\textit{Proof of Claim.}
  Suppose, by way of contradiction, there is some index $i$ for which
  we have $
    \nu_{i}(h) < (\beta_{i} - \nu_{i}(g))p^{nm}$.  Then it
    follows that $\nu_{i}((g^{p^{nm}})h) < \beta_{i} p^{nm}$, and taking
 $p^{nm}$-th roots yields
$\nu_{i}(g (h^{1/p^{nm}}) ) < \beta_{i}$.  From
Theorem~\ref{thm:hochhunvalsthm} above, there is an $R$-linear map $\Theta \: R^{+} \to R$ with
$\Theta(g(h^{1/p^{nm}})) \not\in \m^{r}$ (since
$\m^{r} \subseteq \m^{r_{i}}$).  Now, $\Theta|_{R^{1/p^{nm}}} \in
\Hom_{R}(R^{1/p^{nm}},R)$, whence $g \cdot \Theta|_{R^{1/p^{nm}}} =
\phi^{m} \cdot r^{1/p^{nm}}$ for some $r \in R$ (by the
construction of $g$ above).  However, we then
arrive at the contradiction $
\Theta(g(h^{1/p^{nm}})) = \phi^{m}((hr)^{1/p^{nm}}) \in
\phi^{m}((hR)^{1/p^{nm}}) \subseteq \m^{r}$.

\medskip

To conclude the proof of Theorem~\ref{thm:keylemmaforpositivity}, suppose now that $J$ is an ideal with $\phi^{m}(J^{1/p^{nm}})
\subseteq \m^{r}$ for some $m \in \Z_{>0}$.  Let $\alpha = \min_{i}
\frac{\beta_{i}-\nu_{i}(g)}{N_{i}}$.
For any $h \in J$, it then follows from the above Claim that
\[
f(h) \geq  \bold{f}(h)  - L - 1   =
 \min_{i} \frac{\nu_{i}(h)}{N_{i}}  - L -1
  \geq  \alpha p^{nm}  - L -1 \; .
\]
Choosing $e_{0} \in \Z_{>0}$ sufficiently large, we have
$\lfloor p^{nm - e_{0}} \rfloor \leq \max \{ 0,  \alpha p^{nm}
- L  -1 \}$ for all $m \in \Z_{>0}$.  It now follows that $J
\subseteq \m^{\lfloor p^{nm - e_{0}} \rfloor}$ for all $m \in \Z_{>0}$.
\end{proof}

\begin{proof}[Proof of Theorem~\ref{thm:positivity}]
  By Lemma~\ref{lem:notfreg}, we need only show $s(R, \sD)>0$ assuming $(R,\sD)$ is $F$-regular.  Furthermore, using Corollary~\ref{cor:completionhassamenumbers}, the statement reduces immediately to the case
  where $R$ is a complete local domain.

Since $(R, \sD)$ is also $F$-pure, fix a surjective $p^{-n}$-linear map $\phi \in \sD_{n}$ for some $n \in \Z_{>0}$. For any $e \in \Gamma_{\sD}$, we have $mn - e \in \Gamma_{\sD}$ for some $m \gg 0$ (so that $\sD_{mn-e}$ contains a surjective map).  It follows that $I_{mn}^{\sD} \subseteq I^{\sD}_{mn - (mn - e)} = I_{e}^{\sD}$ by Lemma \ref{lem:propsofIe}.  Now, as $(R, \sD)$ is $F$-regular, we may conclude
\[
0 = P_{\sD} = \bigcap_{e>0} I_{e}^{\sD} = \bigcap_{e \in \Gamma_{\sD}} I_{e}^{\sD}= \bigcap_{m > 0} I_{nm}^{\sD} \, \, .
\]
In addition, using that $\phi$ is surjective, we see the sequence of ideals $\{I_{nm}^{\sD}\}_{m \in \Z_{>0}}$ is non-increasing.  Thus, by Chevalley's Lemma, this sequence is cofinal with the powers of $\m$.

Let $r,e_{0} \in \Z_{>0}$ satisfy the conclusion of
 Theorem~\ref{thm:keylemmaforpositivity} for $\phi$, and fix $m_{0} \in \Z_{>0}$ with
$I_{nm_{0}}^{\sD} \subseteq \m^{r}$.  This implies
that
$
\phi^{m}((I_{n(m+m_{0})}^{\sD})^{1/p^{nm}}) \subseteq
I_{nm_{0}}^{\sD} \subseteq \m^{r}
$
for all $m \in \Z_{>0}$ by Lemma \ref{lem:propsofIe}, so by Theorem~\ref{thm:keylemmaforpositivity}
we conclude $I_{n(m+m_{0})}^{\sD} \subseteq \m^{\lfloor p^{mn -
    e_{0}}\rfloor}$ for all $m \in \Z_{>0}$.  Thus, we have
\[
\begin{array}{rcccl}\displaystyle
  s(R, \sD) & = &\displaystyle \lim_{m \to \infty}
  \frac{a_{n(m+m_{0})}^{\sD}}{p^{n(m+m_{0})(d+\alpha(R))}} & = &
  \displaystyle \lim_{m \to \infty} \frac{\length_{R}(R /
    I_{n(m+m_{0})}^{\sD})}{p^{n(m+m_{0})d}} \\
& \geq & \displaystyle \lim_{m \to \infty} \frac{\length(R/\m^{\lfloor
  p^{mn - e_{0}}\rfloor})}{p^{n(m+m_{0})d}} & =
&\displaystyle \frac{1}{p^{(nm_{0}+e_{0})d}} \lim_{m\to \infty} \frac{\length(R/\m^{\lfloor
  p^{mn - e_{0}}\rfloor})}{p^{(nm - e_{0})d}} \\
& = & \displaystyle \frac{1}{p^{(nm_{0}+e_{0})d}} \cdot \frac{e(R)}{d!} & > & 0
\end{array}
\]
where $e(R) \geq 1$ denotes the Hilbert-Samuel multiplicity of $R$.
\end{proof}

\section{Applications}
\label{sec:applications}

\subsection{$F$-splitting ratio}
\label{sec:gener-f-splitt}

Suppose $\sD$ is a Cartier subalgebra on a $d$-dimensional $F$-finite local ring
 $(R,\m,k)$.  When
$(R, \sD)$ is $F$-regular, the $F$-signature precisely characterizes
the growth rate of the $F$-splitting numbers $a_{e}^{\sD}$.
However, when $(R, \sD)$ is not $F$-regular,
$s(R, \sD) = \lim_{e \in \Gamma_{\sD}}
\frac{a_{e}^{\sD}}{p^{e(d + \alpha(R))}} = 0$ merely gives a (crude) upper bound.
The aim of this
section is to give a precise characterization of the
asymptotic behavior of
$a_{e}^{\sD}$ for $e \gg 0$.  The main idea is rather simple: Cartier subalgebras make it possible to pass to the appropriate quotient ring.
We begin with a Lemma facilitating this transition.

\begin{lemma} [\cf \cite{SchwedeFAdjunction}]
\label{lem:passtoquotients}
   Suppose $\sD$ is a Cartier subalgebra on an $F$-finite local ring
 $(R,\m)$.  If $J \subseteq R$ is a $\sD$-compatible ideal and $\sD_{J}$
is the induced Cartier subalgebra on $R/J$ as in
Definition~\ref{def:compatideal}, then $I_{e}^{\sD_{J}} = I_{e}^{\sD} /J$ and
$a_{e}^{\sD_{J}} = a_{e}^{\sD}$ for all $e \in \Z_{>0}$.  In particular, we have
 $\Gamma_{\sD_{J}} =
\Gamma_{\sD}$ and $n_{\sD_{J}} = n_{\sD}$.

\end{lemma}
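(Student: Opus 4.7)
The plan is to first establish the identity $I_e^{\sD_J} = I_e^{\sD}/J$ as ideals in $R/J$, and then derive the other three equalities as formal consequences of this together with the length-based formula for $a_e^{\sD}$ from Proposition~\ref{prop:charofsplittingnumbers}~(a).

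The first step is to note that since $J$ is $\sD$-compatible and (necessarily) contained in $\m$, we have $\phi(F^e_* J) \subseteq J \subseteq \m$ for every $\phi \in \sD_e$, so $J \subseteq I_e^{\sD}$ and the quotient $I_e^{\sD}/J$ makes sense. The definition of $\sD_J$ in Definition~\ref{def:compatideal} exhibits each element of $(\sD_J)_e$ as $\phi_J$ for some $\phi \in \sD_e$, and the commutative diagram displayed there gives $\phi_J(F^e_* \bar r) = \overline{\phi(F^e_* r)}$ for any lift $r \in R$ of $\bar r \in R/J$. Hence $\bar r \in I_e^{\sD_J}$ if and only if $\overline{\phi(F^e_* r)} \in \m/J$ for every $\phi \in \sD_e$, which happens if and only if $\phi(F^e_* r) \in \m$ for every such $\phi$, i.e. $r \in I_e^{\sD}$. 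This gives $I_e^{\sD_J} = I_e^{\sD}/J$.

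Next, I would apply Proposition~\ref{prop:charofsplittingnumbers}~(a) to both pairs to obtain the formulas
\[
a_e^{\sD} = p^{e\alpha(R)} \length_R(R/I_e^{\sD}), \qquad a_e^{\sD_J} = p^{e\alpha(R/J)} \length_{R/J}\bigl((R/J)/I_e^{\sD_J}\bigr).
\]
By the identification from step one, $(R/J)/I_e^{\sD_J} \cong R/I_e^{\sD}$ canonically, and lengths of $R/J$-modules coincide over $R$ and $R/J$, so both length factors agree. Since $J \subseteq \m$, the residue field of $R/J$ is still $k = R/\m$, giving $\alpha(R) = \alpha(R/J)$. Combining these yields $a_e^{\sD_J} = a_e^{\sD}$ for all $e \in \Z_{>0}$. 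The remaining equalities $\Gamma_{\sD_J} = \Gamma_{\sD}$ and $n_{\sD_J} = n_{\sD}$ then fall out immediately, since both invariants depend only on which $F$-splitting numbers are non-zero.

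I do not anticipate any substantive obstacle: the argument is essentially formal once the definitions are unwound. The two points worth flagging are the preservation of $\alpha$ under passage to $R/J$, which relies only on the fact that $J \subseteq \m$ so the residue field is unchanged, and the bijective correspondence between $\sD_e$-maps modulo those sending $R$ into $J$ and $(\sD_J)_e$-maps, which is immediate from Definition~\ref{def:compatideal}.
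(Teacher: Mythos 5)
Your proof is correct and follows essentially the same route as the paper's: verify $\phi_J(F^e_* \bar{r}) = \overline{\phi(F^e_* r)}$ from the commutative diagram, deduce $I_e^{\sD_J} = I_e^{\sD}/J$, then convert to $F$-splitting numbers via Proposition~\ref{prop:charofsplittingnumbers}(a) using $\alpha(R) = \alpha(R/J)$ (unchanged residue field) and length preservation under the quotient. The paper phrases the observation that $J \subseteq I_e^{\sD}$ as ``$I_e^{\sD}$ contains every proper $\sD$-compatible ideal,'' but this is the same point you make.
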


 \begin{proof}
\renewcommand{\iff}{\; \Leftrightarrow \;}
Fix $e \in \Z_{>0}$ and consider $r + J \in (R/J)$.  Since we have $\phi(F^{e}_{*}r) + J  = \phi_{J}(F^{e}_{*}(r + J))$ for all $\phi \in \sD_{e}$ and also $(\sD_{J})_{e} = \{ \, \phi_{J} \; | \; \phi \in \sD_{e} \, \}$, it follows that
\[
\begin{array}{ccccc}
r \in I_{e}^{\sD} & \iff &  \phi(F^{e}_{*}r) \in \m \mbox{ for all } \phi \in \sD_{e}  & & \\
& \iff &  \phi_{J}\left(F^{e}_{*}(r + J) \right) \in \bm/J \mbox{ for all } \phi_{J} \in (\sD_{J})_{e}  &\iff & (r + J) \in I_{e}^{\sD_{J}}
\end{array}
\]
and in particular $I_{e}^{\sD_{J}} = I_{e}^{\sD} /J$ (note that $I_{e}^{\sD}$ contains every proper $\sD$-compatible ideal).  Thus, we may conclude that $a_{e}^{\sD} = {p^{e\alpha(R)}}\length_{R}(R/I_{e}^{\sD}) = {p^{e \alpha(R/J)}} \length_{R/J}(R/J \big/I_{e}^{\sD}/J) = a_{e}^{\sD_{J}}$ and so also both $\Gamma_{\sD_{J}} = \Gamma_{\sD}$ and $n_{\sD_{J}} = n_{\sD}$ as well.
 \end{proof}

 \begin{theorem}
   \label{thm:splittingratio}
Consider a Cartier subalgebra $\sD$ on an $F$-finite local ring
 $(R,\m)$.  Suppose $(R, \sD)$ is $F$-pure with $F$-splitting prime $P_{\sD}$.  Then the limit
\[
r_{F}(R, \sD) = \lim_{e \in \Gamma_{\sD} \to \infty}
\frac{a_{e}^{\sD}}{p^{e\left(\dim(R/P_{\sD}) + \alpha(R)\right)}} =
\lim_{m \to \infty}
\frac{a_{mn_{\sD}}^{\sD}}{p^{mn_{\sD}(\dim(R/P_{\sD}) +\alpha(R))}}
\]
exists and is called the $F$-splitting ratio of $(R, \sD)$.  Furthermore, if $\sD_{P_{\sD}}$ denotes the Cartier subalgebra on $R/P_{\sD}$ induced by $\sD$ as in Definition~\ref{def:compatideal}, we have
\[
r_{F}(R, \sD) = s( R/P_{\sD}, \sD_{P_{\sD}})
\]
so that, in particular, the $F$-splitting ratio
satisfies
$0 < r_{F}(R, \sD) \leq 1$.
 \end{theorem}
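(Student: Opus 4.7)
The plan is to reduce the statement to Theorem~\ref{thm:existence} and Theorem~\ref{thm:positivity} applied to the pair $(R/P_{\sD}, \sD_{P_{\sD}})$ on the quotient ring. The tool that makes this reduction possible is Lemma~\ref{lem:passtoquotients}, which shows that splitting numbers and semigroups are preserved under passage to a $\sD$-compatible quotient.

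First, I would invoke Proposition~\ref{prop:splittingprimesexist} to know that the $F$-splitting prime $P_{\sD}$ is a proper $\sD$-compatible ideal, so the induced Cartier subalgebra $\sD_{P_{\sD}}$ on $R/P_{\sD}$ from Definition~\ref{def:compatideal} makes sense. Lemma~\ref{lem:passtoquotients} then gives $a_{e}^{\sD}=a_{e}^{\sD_{P_{\sD}}}$ for all $e\in\Z_{>0}$, together with $\Gamma_{\sD}=\Gamma_{\sD_{P_{\sD}}}$ and $n_{\sD}=n_{\sD_{P_{\sD}}}$. The next cosmetic but essential observation is that $R$ and $R/P_{\sD}$ share the residue field $k=R/\m$, hence $\alpha(R)=\alpha(R/P_{\sD})$ by definition.

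With these two identifications in hand, I would apply Theorem~\ref{thm:existence} to the Cartier subalgebra $\sD_{P_{\sD}}$ on the $\dim(R/P_{\sD})$-dimensional $F$-finite local ring $R/P_{\sD}$. This gives
\[
s(R/P_{\sD},\sD_{P_{\sD}})
= \lim_{e\in\Gamma_{\sD_{P_{\sD}}}\to\infty}
\frac{a_{e}^{\sD_{P_{\sD}}}}{p^{e(\dim(R/P_{\sD})+\alpha(R/P_{\sD}))}}
= \lim_{e\in\Gamma_{\sD}\to\infty}
\frac{a_{e}^{\sD}}{p^{e(\dim(R/P_{\sD})+\alpha(R))}},
\]
where the second equality is just a rewrite using the identifications from the previous paragraph. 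The equality also yields the asserted description of $r_{F}(R,\sD)$ in terms of the subsequence over multiples of $n_{\sD}$. Thus $r_{F}(R,\sD)$ exists and equals $s(R/P_{\sD},\sD_{P_{\sD}})$.

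For the positivity statement, I would invoke Lemma~\ref{lem.BijectionBetweenCompatibleIdeals}, which says that $(R/P_{\sD},\sD_{P_{\sD}})$ is $F$-regular precisely because $P_{\sD}$ is the $F$-splitting prime; then Theorem~\ref{thm:positivity} forces $s(R/P_{\sD},\sD_{P_{\sD}})>0$, hence $r_{F}(R,\sD)>0$. The upper bound $r_{F}(R,\sD)\leq 1$ follows from the remark after Theorem~\ref{thm:existence} applied to $(R/P_{\sD},\sD_{P_{\sD}})$, namely $a_{e}^{\sD_{P_{\sD}}}\leq p^{e(\dim(R/P_{\sD})+\alpha(R/P_{\sD}))}$. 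There is no real obstacle here; the substance of the theorem is encoded in the earlier results, and the only delicate point is verifying that the normalizing exponent $\dim(R/P_{\sD})+\alpha(R)$ in $r_{F}$ agrees with the $d+\alpha$ used by Theorem~\ref{thm:existence} after passing to the quotient, which is exactly the equality $\alpha(R)=\alpha(R/P_{\sD})$ noted above.
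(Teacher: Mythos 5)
Your proposal is correct and follows exactly the same route as the paper's own proof: pass to the quotient $\overline{R}=R/P_{\sD}$ via Lemma~\ref{lem:passtoquotients}, invoke Lemma~\ref{lem.BijectionBetweenCompatibleIdeals} for $F$-regularity of $(\overline{R},\sD_{P_{\sD}})$, and then apply Theorems~\ref{thm:existence} and~\ref{thm:positivity}. The only thing you add beyond the paper's terse write-up is making explicit that $\alpha(R)=\alpha(R/P_{\sD})$, which the paper leaves implicit.
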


\begin{proof}
From Lemma~\ref{lem.BijectionBetweenCompatibleIdeals}, we have that $(\overline{R}, \overline{\sD})$ is $F$-regular and by Lemma~\ref{lem:passtoquotients} we have
\[
r_{F}(R, \sD) = \lim_{e \in \Gamma_{\sD} \to \infty}
\frac{a_{e}^{\sD}}{p^{e(\dim(\overline{R} + \alpha(R)))}} = \lim_{e \in \Gamma_{\overline{\sD}} \to \infty}
\frac{a_{e}^{\overline{\sD}}}{p^{e(\dim(\overline{R} + \alpha(R)))}} =
s(\overline{R}, \overline{\sD}) \, \, .
\]
In particular, Theorem~\ref{thm:existence} gives that
the limit under consideration exists and Theorem~\ref{thm:positivity}
further implies $0 < s(\overline{R}, \overline{\sD}) \leq 1$.
 \end{proof}

 \begin{corollary}
   \label{cor:fsplitratio}
   Suppose $R$ is an $F$-finite local ring. If $R$ is $F$-pure with $F$-splitting prime $P_{R}$, then
\[
r_{F}(R) = \lim_{e \to \infty} \frac{a_{e}^{R}}{p^{e(\alpha(R) + \dim(R/P_{R}))}} > 0.
\]
 \end{corollary}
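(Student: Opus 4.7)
The plan is to deduce this from Theorem~\ref{thm:splittingratio} by taking $\sD$ to be the total Cartier algebra $\sC^R$. With this choice, the notions of $a_e^{\sC^R}$ and $P_{\sC^R}$ reduce by definition to the classical $F$-splitting number $a_e^R$ and the $F$-splitting prime $P_R$ of $R$, respectively. Moreover the hypothesis that $R$ is $F$-pure is, by the paper's conventions, exactly the hypothesis that $(R, \sC^R)$ be $F$-pure. Hence Theorem~\ref{thm:splittingratio} will apply and deliver the existence of
\[
\lim_{e \in \Gamma_{\sC^R} \to \infty} \frac{a_e^R}{p^{e(\dim(R/P_R) + \alpha(R))}} = s\bigl(R/P_R,\, (\sC^R)_{P_R}\bigr),
\]
together with the strict positivity of this quantity, the latter coming from the fact that $(R/P_R, (\sC^R)_{P_R})$ is $F$-regular (by Lemma~\ref{lem.BijectionBetweenCompatibleIdeals}) combined with Theorem~\ref{thm:positivity}.

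The one remaining point---which I expect to be the main, though small, obstacle---is to upgrade the limit over the semigroup $\Gamma_{\sC^R}$ to the unrestricted limit over all $e > 0$ that appears in the statement of the corollary. For this I will verify that $\Gamma_{\sC^R} = \Z_{>0}$. Since $R$ is $F$-pure, the inclusion $R \hookrightarrow F_{*}R$ splits as a map of $R$-modules, say via some $\phi \in \sC^{R}_{1}$. Iterating this splitting, the composition $\phi \cdot \phi \cdots \phi = \phi^{e} \in \sC^{R}_{e}$ provides a splitting of $R \hookrightarrow F^{e}_{*}R$ for every $e > 0$. In particular $a_e^R \geq 1$ for every $e > 0$, so every positive integer lies in $\Gamma_{\sC^R}$. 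Consequently the limit along $\Gamma_{\sC^R}$ agrees with the ordinary limit, and the corollary follows.
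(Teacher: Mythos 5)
Your proposal is correct and follows the route the paper clearly intends: the corollary is the specialization of Theorem~\ref{thm:splittingratio} to $\sD = \sC^R$, with positivity supplied by Lemma~\ref{lem.BijectionBetweenCompatibleIdeals} and Theorem~\ref{thm:positivity}. You also correctly supply the small point the paper leaves implicit — that $F$-purity forces $\Gamma_{\sC^R} = \Z_{>0}$ (a surjective $\phi \in \sC^R_e$ restricts to a surjective element of $\sC^R_1$, whose powers lie in every degree) — which is exactly what lets one replace the limit over $\Gamma_{\sC^R}$ in Theorem~\ref{thm:splittingratio} by the unrestricted $\lim_{e\to\infty}$ in the corollary.
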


 \begin{corollary}
   Suppose $(R,\m)$ is an $F$-finite local ring
 and
$\sD$ is an $F$-pure Cartier subalgebra on $R$ with $F$-splitting prime $P_{\sD}$.
Then the limit
\[
\lim_{e\in \Gamma_{\sD} \to \infty}
  \frac{\log_{p}a_{e}^{\sD}}{e} = \dim(R / P_{\sD}) + \alpha(R)  \in
\Z_{\geq 0}
\]
exists and is a non-negative integer.
 \end{corollary}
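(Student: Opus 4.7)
The plan is to deduce this corollary directly from the $F$-splitting ratio statement of Theorem~\ref{thm:splittingratio}. Since $(R,\sD)$ is $F$-pure, that theorem provides a positive real number $r_{F}(R,\sD) \in (0,1]$ such that
\[
\lim_{e \in \Gamma_{\sD} \to \infty} \frac{a_{e}^{\sD}}{p^{e(\dim(R/P_{\sD}) + \alpha(R))}} = r_{F}(R,\sD) > 0.
\]
In particular, for $e \in \Gamma_{\sD}$ large we have $a_{e}^{\sD} = r_{F}(R,\sD) \cdot p^{e(\dim(R/P_{\sD})+\alpha(R))} \cdot (1 + o(1))$, with the $o(1)$ understood along the semigroup $\Gamma_{\sD}$.

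Taking $\log_{p}$ of both sides of this asymptotic and using that $r_{F}(R,\sD) > 0$ (so its logarithm is a finite real number), we obtain
\[
\log_{p} a_{e}^{\sD} \; - \; e\bigl(\dim(R/P_{\sD}) + \alpha(R)\bigr) \; \longrightarrow \; \log_{p} r_{F}(R,\sD)
\]
as $e \in \Gamma_{\sD} \to \infty$. Dividing through by $e$, the right-hand side tends to $0$ and we conclude $\lim_{e \in \Gamma_{\sD}} \frac{\log_{p} a_{e}^{\sD}}{e} = \dim(R/P_{\sD}) + \alpha(R)$, which is the asserted equality.

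It remains to justify that this value is a non-negative integer. The quantity $\dim(R/P_{\sD})$ is a non-negative integer as a Krull dimension. For $\alpha(R) = \log_{p}[k : k^{p}]$, the $F$-finiteness hypothesis on $R$ forces the residue field $k$ to be $F$-finite, hence to possess a finite $p$-basis; thus $[k:k^{p}]$ is a (non-negative) power of $p$ and $\alpha(R) \in \Z_{\geq 0}$. There is essentially no obstacle here: once Theorem~\ref{thm:splittingratio} is in hand, the whole argument is a one-line manipulation of logarithms, and integrality comes for free from the definitions.
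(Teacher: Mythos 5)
Your proof is correct and follows exactly the route the paper intends: the corollary is stated immediately after Theorem~\ref{thm:splittingratio} precisely because the limit and its positivity make the logarithmic computation a one-liner, and the integrality of $\alpha(R)$ for $F$-finite $R$ is standard (a finite $p$-basis gives $[k:k^p] = p^{\alpha(R)}$). No gap; this is the same argument.
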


\begin{definition}
  Suppose $(R,\m)$ is an $F$-finite local ring
 and
$\sD$ is an $F$-pure Cartier subalgebra on $R$ with $F$-splitting prime $P_{\sD}$.
The \emph{$F$-splitting dimension of $(R, \sD)$}, denoted $\sdim(R, \sD)$, is the common value of the following equivalent expressions:

\begin{center}
\begin{tabular}{l@{\hspace{3mm}}c@{\hspace{.5cm}}l@{\hspace{3mm}}c}
(a) &$\sup \left\{ \; \delta \in \R  \; \; \left| \displaystyle \; \; \liminf_{e \in \Gamma_{\sD} \to \infty}
 \frac{a_{e}^{\sD}}{p^{e(\delta+\alpha(R))}} > 0 \; \right. \right\}$ & (c) &$\dim(R / P_{\sD})$
\\
(b) &
$\max \left\{ \; \delta \in \Z  \; \; \left| \displaystyle \; \; \lim_{e \in \Gamma_{\sD} \to \infty}
 \frac{a_{e}^{\sD}}{p^{e(\delta+\alpha(R))}} > 0 \; \right. \right\}$
& (d) &  $\displaystyle \lim_{e \in \Gamma_{\sD} \to \infty} \left( \frac{\log_{p}
    a_{e}^{\sD}}{e} - \alpha(R) \right)$ \smallskip \\
\end{tabular}
\end{center}
Note that the equivalence of (a) - (d) follows immediately from Theorem~\ref{thm:splittingratio}, and in particular the expression in (a) takes only integer values.
\end{definition}

\begin{remark}
\label{rmk:aeremark}
The $F$-splitting dimension of a local ring $R$ is simply the
  $F$-splitting dimension of $(R, \sC^{R})$ and was
  first introduced by I. Aberbach and F. Enescu
  \cite{AberbachEnescuStructureOfFPure} using the characterization in (b).
  The equality of (b) and (c) in this case gives a positive answer to \cite[Question 4.9]{AberbachEnescuStructureOfFPure}.
\end{remark}

\subsection{$F$-graded systems of ideals}
\label{sec:f-graded-systems}

In this section, we develop additional tools for the computation of \mbox{$F$-signature}.  In particular, we develop methods for computing $F$-splitting numbers by studying certain colon ideals on regular rings, similar to \cite[Lemma 1.6]{FedderFPureRat}.  This Fedder-type criterion makes use of $F$-graded systems of ideals as defined in \cite{BlickleTestIdealsViaAlgebras}.

\renewcommand{\n}{\mathfrak{n}}
\renewcommand{\b}{\mathfrak{b}}
\renewcommand{\a}{\mathfrak{a}}
\begin{definition}
Suppose $(S, \n$ is an $F$-finite local ring.  A \emph{$F$-graded system of ideals of $S$} is a sequence of ideals $\b_{\bullet} = \{ \b_{e} \}_{e \in \Z_{\geq 0}}$ such that $\b_{e}^{[p^{l}]} \b_{l} \subseteq \b_{e+l}$ for all $e,l \in \Z_{\geq 0}$.  To avoid pathologies we will assume that $\b_{0} = R$ and $\b_{e} \neq 0$ for some $e > 0$.
\end{definition}

\begin{remark}
Recall that a (standard) graded system of ideals (\textit{cf.} \cite[Definition 1.1]{EinLazSmithSymbolic}) is a sequence of ideals $\a_{\bullet} = \{ \a_{n} \}_{n \in \Z_{\geq 0}}$ such that $\a_{n} \a_{n'} \subseteq \a_{n+n'}$ for all $n, n' \in \Z_{\geq 0}$.  This naturally gives rise to a $F$-graded system by taking $\b_{e} = \a_{p^{e}-1}$.  Roughly speaking, this $F$-graded system keeps track of $\a_{n}$ for large values of $n$ which are not divisible by $p$.
\end{remark}

\begin{lemma}\label{lem:idealstoalgebras}
If $(S, \n )$ is an $F$-finite local ring, then every $F$-graded system of ideals $\b_{\bullet}$ of $S$ defines a Cartier subalgebra $\sC^{\b_{\bullet}} = \oplus_{e \geq 0} (\sC^{\b_{\bullet}})_{e}$ on $S$ by setting $(\sC^{\b_{\bullet}})_{e} \colonequals \sC^{S}_{e} \cdot F^{e}_{*}\b_{e}$ for all $e \in \Z_{\geq 0}$.  If in addition $S$ is Gorenstein, then every Cartier subalgebra $\sD$ arises uniquely in this manner.
\end{lemma}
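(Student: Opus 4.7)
My plan is to handle the two assertions in tandem, since both rest on a single explicit computation of the composition product in $\sC^S$, which I would establish first. The key identity I aim to verify is
\[
(\phi_1 \cdot F^e_* b) \cdot (\phi_2 \cdot F^l_* c) = (\phi_1 \cdot \phi_2) \cdot F^{e+l}_* (b^{p^l} c)
\]
in $\sC^S_{e+l}$, valid for any $\phi_1 \in \sC^S_e$, $\phi_2 \in \sC^S_l$, and any $b,c \in S$. This is a direct unwinding of the composition product on both sides: one evaluates at $F^{e+l}_* x$ and uses the $p^{-l}$-linearity of $\phi_2$ to pull $b^{p^l}$ across the inner $F^l_*$.

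For the first assertion, the axioms $(\sC^{\mathfrak{b}_\bullet})_0 = S$ and $(\sC^{\mathfrak{b}_\bullet})_e \neq 0$ for some $e > 0$ are immediate from $\mathfrak{b}_0 = S$ and the non-triviality assumption on $\mathfrak{b}_\bullet$. For closure under the composition product I would use that $(\sC^{\mathfrak{b}_\bullet})_e = \sC^S_e \cdot F^e_* \mathfrak{b}_e$ is spanned, as an $F^e_* S$-module, by elements of the form $\phi \cdot F^e_* b$; bilinearity then reduces the closure check to the key identity above, whose right-hand side $(\phi_1 \cdot \phi_2) \cdot F^{e+l}_*(b^{p^l} c)$ lies in $(\sC^{\mathfrak{b}_\bullet})_{e+l}$ precisely because the $F$-graded condition gives $b^{p^l} c \in \mathfrak{b}_e^{[p^l]} \mathfrak{b}_l \subseteq \mathfrak{b}_{e+l}$.

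For the second assertion, the Gorenstein hypothesis enters via Example~\ref{ex:gorensteincartieralgebra}: I would fix $\Phi \in \sC^S_1$ whose powers $\Phi^e$ freely generate each $\sC^S_e$ as a cyclic $F^e_* S$-module. Given an arbitrary Cartier subalgebra $\sD$, the graded piece $\sD_e$ is automatically an $F^e_* S$-submodule of $\sC^S_e$: for $r \in S = \sD_0$ and $\phi \in \sD_e$, the Cartier product $\phi \cdot r$ agrees with the natural $F^e_* S$-action of $F^e_* r$ on $\phi$, since both send $F^e_* x \mapsto \phi(F^e_*(rx))$. Consequently there is a unique ideal $\mathfrak{b}_e \subseteq S$ with $\sD_e = F^e_* \mathfrak{b}_e \cdot \Phi^e$, and tautologically $\sD_e = \sC^S_e \cdot F^e_* \mathfrak{b}_e$. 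To verify that $\mathfrak{b}_\bullet$ is a graded system, I would reapply the key identity with $\phi_1 = \Phi^e$ and $\phi_2 = \Phi^l$: the product $(\Phi^e \cdot F^e_* b) \cdot (\Phi^l \cdot F^l_* c) = \Phi^{e+l} \cdot F^{e+l}_*(b^{p^l} c)$ must lie in $\sD_{e+l} = \Phi^{e+l} \cdot F^{e+l}_* \mathfrak{b}_{e+l}$, and freeness of $\Phi^{e+l}$ as an $F^{e+l}_* S$-generator forces $b^{p^l} c \in \mathfrak{b}_{e+l}$.

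The main obstacle is essentially bookkeeping: one must keep careful track of the Cartier composition product versus the natural $F^e_* S$-action on each $\sC^S_e$, so that the key identity is derived once and cleanly reused in both directions. Once this is in place the two assertions follow by formal manipulations, with uniqueness in the second part being automatic from the faithfulness of $\Phi^e$ as an $F^e_* S$-generator of $\sC^S_e$.
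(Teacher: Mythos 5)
Your proof is correct and takes essentially the same approach as the paper, which simply declares closure under multiplication ``immediate to check'' and invokes the Gorenstein isomorphism $\sC^S_e \cong F^e_* S$; the key identity $(\phi_1 \cdot F^e_* b) \cdot (\phi_2 \cdot F^l_* c) = (\phi_1 \cdot \phi_2) \cdot F^{e+l}_*(b^{p^l} c)$ you isolate is precisely the computation underlying both halves. The only small omission is that in the Gorenstein direction you should also note that $\b_0 = S$ and $\b_e \neq 0$ for some $e > 0$ (required by the definition of an $F$-graded system) follow from $\sD_0 = S$ and $\sD_e \neq 0$ for some $e > 0$.
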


\begin{proof}
It is immediate to check that $\sC^{\b_{\bullet}}$ is closed under multiplication in $\sC^{S}$, giving the first statement (\textit{cf.} \cite[Lemma 4.1]{SchwedeBetterFPureFRegular}).  For the latter statement when $S$ is Gorenstein, one simply makes use of the fact that $\sC^S_e = \Hom_S(F^e_* S, S) \cong F^e_* S$ as $F^e_* S$-modules as in Example~\ref{ex:gorensteincartieralgebra}.
\end{proof}

\begin{definition}
  If $(S, \n)$ is an $F$-finite local ring and $\b_{\bullet}$ is an $F$-graded system of ideals of $S$, the $F$-splitting numbers of
  $(S, \b_{\bullet})$ are simply the $F$-splitting numbers of the associated Cartier subalgebra $\sC^{\b_{\bullet}}$ on $S$ as in Lemma~\ref{lem:idealstoalgebras}.  One defines the $F$-signature, $F$-splitting prime, and $F$-splitting ratio of $(S, \b_{\bullet})$ in a similar manner.  We shall conflate the pairs $(S, \b_{\bullet})$ and $(S, \sC^{\b_{\bullet}})$ in the notation and terminology appearing throughout.
\end{definition}

\begin{example}
\label{ex.TwoExamplesOfFGraded}
  The two main examples of $F$-graded systems we will make use of are
  the following.
  \begin{enumerate}
  \item If $J$ is any non-zero ideal of $S$, set $\b_{e} =
    (J^{[p^{e}]}:J)$ for all $e > 0$.  In case $S$ is in fact regular,
    as we shall see in Corollary~\ref{cor:fedcritforsplitnumbersandsignature}, studying this $F$-graded system is essentially
    equivalent to studying the $F$-splitting properties of $R = S/J$.
  \item If $\a$ is any non-zero ideal of $S$ and $t \in \R_{\geq 0}$,
    we may consider $\b_{e} = \a^{\lceil t(p^{e}-1) \rceil}$ for all
    $e > 0$.  This $F$-graded system is closely related to the notion
    of $\a^{t}$-tight closure for ideals in $S$ as in
    \cite{HaraYoshidaGeneralizationOfTightClosure}; see also \cite{SchwedeSharpTestElements} and Section \ref{sec:divisor-pairs} below.
  \end{enumerate}
\end{example}

\begin{lemma} [\cf \cite{FedderFPureRat}]
\label{lem:compsinregrings}
  Suppose $(S, \n, l)$ is an $F$-finite regular local ring and $\b_{\bullet}$ is an $F$-graded system of ideals in $S$ with associated Cartier subalgebra $\sD = \sC^{\b_{\bullet}}$ as in Proposition~\ref{lem:idealstoalgebras}.  Then we have
\[
a_{e}^{\b_{\bullet}} = a_{e}^{\sD} = p^{e\alpha(S)} \length_{S}\left(S/(\n^{[p^{e}]}:\b_{e})\right) = p^{e\alpha(S)} \length\left( \b_{e} /(\b_{e} \cap \n^{[p^{e}]}) \right)
\]
for all $e > 0$, which also equals the maximal number of copies of $S$ in a direct sum decomposition of $S^{1/p^{e}}$ as an $S$-module that are contained in $\b^{1/p^{e}}$.  Furthermore, we have
\[
P_{\b_{\bullet}} := P_{\sD} = \bigcap_{e>0} \left( \n^{[p^{e}]} : \b_{e}\right)
\]
and an ideal $J \subseteq S$ is $\sD$-compatible if and only if $\b_{e} \subseteq (J^{[p^{e}]}:J)$ for all $e > 0$.
\end{lemma}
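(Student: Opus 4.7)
The plan is to reduce every claim to the classical Fedder-type identity that, when $S$ is regular, $I_e^{\sC^S}=\n^{[p^e]}$. Granting this, $I_e^\sD$ becomes easy to identify with $(\n^{[p^e]}:\b_e)$, after which Proposition~\ref{prop:charofsplittingnumbers} and Lemma~\ref{lemma:aboutsplittingnumbers} deliver everything. The identity $I_e^{\sC^S}=\n^{[p^e]}$ itself rests on the freeness of $F^e_*S$ over $S$ (Theorem~\ref{thm:kunz}): Lemma~\ref{lem:propsofIe} gives $\n^{[p^e]}\subseteq I_e^{\sC^S}$, and conversely if $r\notin\n^{[p^e]}$ then $F^e_*r$ is a minimal generator of $F^e_*S$ modulo $\n F^e_*S$, so the associated coordinate map $\psi\in\sC^S_e$ sends $F^e_*r$ to a unit.

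First I would compute $I_e^\sD$. Any $\phi\in\sD_e=\sC^S_e\cdot F^e_*\b_e$ is an $S$-linear combination of maps of the form $\psi(F^e_*b\cdot-)$ with $\psi\in\sC^S_e$ and $b\in\b_e$, so $\phi(F^e_*r)=\psi(F^e_*(br))$. Consequently $r\in I_e^\sD$ precisely when $br\in I_e^{\sC^S}=\n^{[p^e]}$ for every $b\in\b_e$, i.e.\ $r\in(\n^{[p^e]}:\b_e)$. Substituting into Proposition~\ref{prop:charofsplittingnumbers}(a) yields the first length formula; the second identity $\length_S(S/(\n^{[p^e]}:\b_e))=\length_S(\b_e/(\b_e\cap\n^{[p^e]}))$ then follows from Matlis duality in the Artinian Gorenstein local ring $A=S/\n^{[p^e]}$, which gives $\length_A(\bar I)=\length_A(A/(0:_A\bar I))$ for every ideal $\bar I\subseteq A$; one applies this to $\bar I=(\b_e+\n^{[p^e]})/\n^{[p^e]}$, whose annihilator in $A$ is $(\n^{[p^e]}:\b_e)/\n^{[p^e]}$. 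For the direct-summand interpretation, I would use the Gorenstein property of $S$ to fix an $F^e_*S$-generator $\Phi$ of $\sC^S_e$, producing an $S$-linear isomorphism $\sC^S_e\simeq F^e_*S$ that carries $\sD_e=\sC^S_e\cdot F^e_*\b_e$ onto $F^e_*\b_e$. Applying Lemma~\ref{lemma:aboutsplittingnumbers}(b) to $M=F^e_*S$ and $D=\sD_e$, and transferring through this isomorphism, then shows that $a_e^\sD$ equals the maximal number of rank-one $S$-free direct summands of $F^e_*S\simeq S^{1/p^e}$ contained in $F^e_*\b_e\simeq \b_e^{1/p^e}$.

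The splitting-prime formula is immediate from $P_\sD=\bigcap_{e>0}I_e^\sD$ (Proposition~\ref{prop:splittingprimesexist}) combined with the identification $I_e^\sD=(\n^{[p^e]}:\b_e)$. For the $\sD$-compatibility criterion, an ideal $J\subseteq S$ is $\sD$-compatible if and only if $\psi(F^e_*(bJ))\subseteq J$ for every $e>0$, $\psi\in\sC^S_e$, and $b\in\b_e$. The crux is that the subset $\{\,x\in S \mid \psi(F^e_*x)\in J\text{ for all }\psi\in\sC^S_e\,\}$ equals $J^{[p^e]}$ by exactly the same freeness argument used for $I_e^{\sC^S}=\n^{[p^e]}$, now applied modulo $J$ instead of modulo $\n$. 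Hence the condition translates into $bJ\subseteq J^{[p^e]}$ for every $b\in\b_e$, equivalently $\b_e\subseteq(J^{[p^e]}:J)$. The main obstacle, modest but essential, is careful bookkeeping around the freeness of $F^e_*S$ in the regular case: both $I_e^{\sC^S}=\n^{[p^e]}$ and its relative version for an arbitrary ideal $J$ hinge on the fact that any minimal generator of $F^e_*S$ modulo a given extension ideal admits a dual coordinate functional in $\sC^S_e$; everything else in the lemma is formal manipulation.
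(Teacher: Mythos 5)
Your proposal is correct and tracks the paper's argument closely: both rest on the Fedder-type identity for the regular ring $S$ (that for ideals $I,J\subseteq S$ one has $\phi(F^e_*I)\subseteq J$ for all $\phi\in\sC^S_e$ precisely when $I\subseteq J^{[p^e]}$), derived either from your coordinate-functional argument or, as in the paper, from the fact that a single $\Phi^e$ generates $\sC^S_e$ as an $F^e_*S$-module. From there the computation of $I_e^\sD=(\n^{[p^e]}:\b_e)$, the formula for $P_\sD$, the direct-summand interpretation (via the $F^e_*S$-linear isomorphism $\sC^S_e\simeq F^e_*S$ carrying $\sD_e$ to $F^e_*\b_e$, then Lemma~\ref{lemma:aboutsplittingnumbers}), and the $\sD$-compatibility criterion all go through exactly as you sketch.

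The one place you diverge is the identity $\length_S\left(S/(\n^{[p^e]}:\b_e)\right)=\length_S\left(\b_e/(\b_e\cap\n^{[p^e]})\right)$. The paper obtains it structurally, by observing that the isomorphism $\sC^S_e\simeq F^e_*S$ identifies $\sD_e^{\mathrm{ns}}$ with $F^e_*(\b_e\cap\n^{[p^e]})$ and then invoking $a_e^\sD=\length_S(\sD_e/\sD_e^{\mathrm{ns}})$ from Proposition~\ref{prop:charofsplittingnumbers}(a). You instead use Matlis/Gorenstein duality in the Artinian ring $A=S/\n^{[p^e]}$, applying $\length_A(\bar I)=\length_A\left(A/(0:_A\bar I)\right)$ to $\bar I=(\b_e+\n^{[p^e]})/\n^{[p^e]}$. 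Both are valid; the paper's route stays internal to the machinery already set up for $F$-splitting numbers and so needs no extra input, while yours is a slightly more self-contained commutative-algebra observation that does not use the $\sD_e^{\mathrm{ns}}$ characterization at all. Either way the lemma is fully proved.
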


\renewcommand{\iff}{\; \; \Leftrightarrow \; \;}
\begin{proof}
Let $\Phi$ be an $S^{1/p}$-module generator of $\Hom_{S}(S^{1/p},S) = \sC^{S}_{1}$.  Since $S^{1/p^{e}}$ is a free \mbox{$S$-module} and  $\Phi^{e}$ is an $S^{1/p^{e}}$-module generator of $\Hom_{S}(S^{1/p^{e}},S) = \sC^{S}_{e}$, it follows that for all ideals $I,J \subseteq S$ one has
\[
\Phi^{e}(I^{1/p^{e}}) \subseteq J \iff  \phi(I^{1/p^{e}}) \subseteq J \mbox{ for all } \phi \in \sC^{S}_{e}  \iff I^{1/p^{e}} \subseteq J  S^{1/p^{e}} \iff I \subseteq J^{[p^{e}]}
\]
for each $e > 0$, see \cite[Lemma 1.6]{FedderFPureRat}.  In particular,
\[
x \in I_{e}^{\sD} \iff \Phi^{e}\left( (x\b_{e})^{1/p^{e}} \right) \subseteq \n \iff x \b_{e} \subseteq \n^{[p^{e}]} \iff x \in (\n^{[p^{e}]} : \b_{e})
\]
so that $I_{e}^{\sD} = (\n^{[p^{e}]} : \b_{e})$ for all $e > 0$.  Thus, we have $P_{\sD} = \cap_{e> 0} I_{e}^{\sD} = \cap_{e>0} (\n^{[p^{e}]} : \b_{e})$ and $a_{e}^{\sD} = p^{e\alpha(S)} \length_{s}(S/I_{e}^{\sD}) = p^{e\alpha(S)} \length_{S}\left(S/(\n^{[p^{e}]}:\b_{e})\right)$.  Now, by construction, the $S^{1/p^{e}}$-linear isomorphism $\sC^{S}_{e} \to[\simeq] S^{1/p^{e}}$ given by sending $\Phi^{e} \mapsto 1$  identifies $\sD_{e}$ with $\b_{e}^{1/p^{e}}$.  Thus, by Proposition~\ref{prop:charofsplittingnumbers}~(b), we also have that $a_{e}^{\sD}$ equals the maximal number of copies of $S$ in a direct sum decomposition of $S^{1/p^{e}}$ as an $S$-module that are contained in $\b^{1/p^{e}}$.  Moreover,
\[
\Phi^{e}(x^{1/p^{e}} \cdot \blank) \in \sD^{\mathrm{ns}}_{e} \iff x \in \b_{e} \mbox{ and } \Phi^{e}(\langle x \rangle^{1/p^{e}}) \subseteq \n \iff x \in \b_{e} \cap \n^{[p^{e}]}
\]
so that the above isomorphism identifies $\sD_{e}^{\mathrm{ns}}$ with $(\b_{e}\cap \n^{[p^{e}]})^{1/p^{e}}$.  It follows that $a_{e}^{\sD} = \length_{S}(\sD_{e}/\sD_{e}^{\mathrm{ns}}) =  \length_{S}\left( \b_{e}^{1/p^{e}} / (\b_{e} \cap \n^{[p^{e}]})^{1/p^{e}} \right) = p^{e\alpha(S)} \length\left( \b_{e} /(\b_{e} \cap \n^{[p^{e}]}) \right)$.  Lastly, we may conclude that $J$ is $\sD$-compatible if and only if we have
\[
 \Phi^{e}((\b_{e}J)^{1/p^{e}}) \subseteq J\iff \b_{e}J \subseteq J^{[p^{e}]} \iff \b_{e} \subseteq (J^{[p^{e}]}:J)
\]
for all $e > 0$.
\end{proof}

\begin{theorem}
\label{thm:FedderComputationForFGradedSystems}
  Suppose $(S, \n)$ is an $F$-finite regular local ring and
$R = S/J$ is a quotient ring for some non-zero ideal $J$ in $S$.  Then
every Cartier subalgebra $\sD$ on $R$ gives rise to an $F$-graded system $\b_{\bullet}$  of
ideals in $S$ such that $\b_{e} \subseteq (J^{[p^{e}]}:J)$ and
\[
\frac{1}{p^{e\alpha(R)}} a_{e}(R,\sD) =
\frac{1}{p^{e\alpha(S)}}a_{e}^{\b_{\bullet}} = \length_{S}\left( S \big/
  (\n^{[p^{e}]}:\b_{e})\right) = \length_{S}\left( \b_{e} \big/ (\b_{e} \cap
  \n^{[p^{e}]})\right)
\]
 for all $e
\in \Z_{>0}$.  In addition, we have
\[
P_{\sD} = P_{\b_{\bullet}} \cdot R = \left( \bigcap_{e>0}
(\n^{[p^{e}]}:\b_{e}) \right) \cdot R
\]
so that $\sdim(R, \sD) = \sdim(S,\b_{\bullet})$ and $r_{F}(R,\sD)
= r_{F}(S,\b_{\bullet})$.
\end{theorem}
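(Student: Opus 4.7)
The plan is to exhibit the correspondence between Cartier subalgebras on $R = S/J$ and $F$-graded systems of ideals in $S$ via Fedder's classical lemma, then to transport all of the relevant invariants under this correspondence.

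First I would fix a generator $\Phi \in \sC^S_1 \cong F_* S$ so that $\Phi^e$ generates $\sC^S_e = \Hom_S(F^e_* S, S) \cong F^e_* S$ as an $F^e_* S$-module (using that $S$ is regular, hence Gorenstein, as in Example~\ref{ex:gorensteincartieralgebra}). Fedder's lemma gives that a map $\Phi^e(F^e_* x \cdot \blank) \in \sC^S_e$ descends to an $R$-linear map $F^e_* R \to R$ if and only if $x \in (J^{[p^e]} : J)$, with two elements descending to the same map precisely when they agree modulo $J^{[p^e]}$. I would then define
\[
\b_e = \bigl\{\, x \in S \;\big|\; \Phi^e(F^e_* x \cdot \blank) \text{ descends to an element of } \sD_e \,\bigr\}.
\]
By construction $J^{[p^e]} \subseteq \b_e \subseteq (J^{[p^e]}:J)$ and there is an $F^e_* S$-linear isomorphism $\b_e / J^{[p^e]} \simeq \sD_e$. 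Verifying $\b_\bullet$ is an $F$-graded system amounts to a direct computation of the product in $\sC^S$: if $x \in \b_e$ and $y \in \b_l$, unwinding $\Phi^{e+l} = \Phi^l \circ F^l_* \Phi^e$ shows that $\Phi^l(F^l_* y \cdot \blank) \cdot \Phi^e(F^e_* x \cdot \blank) = \Phi^{e+l}(F^{e+l}_*(y^{p^e} x) \cdot \blank)$, and since the left side descends to an element of $\sD_{e+l}$, we obtain $\b_l^{[p^e]} \b_e \subseteq \b_{e+l}$, the $F$-graded system property.

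Next I would translate the splitting-number ideal $I_e^\sD$ to $S$. For a lift $\tilde r \in S$ of $r \in R$ and $\phi_x \in \sD_e$ coming from $x \in \b_e$, we have $\phi_x(F^e_* r) = \Phi^e(F^e_*(x \tilde r)) \bmod J$. Hence $r \in I_e^\sD$ if and only if $\Phi^e(F^e_*(\tilde r \cdot \b_e)) \subseteq \n$, which by Fedder's lemma (applied to ideals) is equivalent to $\tilde r \cdot \b_e \subseteq \n^{[p^e]}$, i.e. $\tilde r \in (\n^{[p^e]} : \b_e)$. Since $J \subseteq (\n^{[p^e]} : \b_e)$, this identifies $I_e^\sD$ with $(\n^{[p^e]}:\b_e)/J$, so that
\[
\length_R(R/I_e^\sD) = \length_S\bigl(S/(\n^{[p^e]}:\b_e)\bigr).
\]
Combining with Proposition~\ref{prop:charofsplittingnumbers} on $R$ and Lemma~\ref{lem:compsinregrings} on $S$ yields the chain of length equalities in the statement, after dividing by the appropriate power of $p^{e\alpha}$.

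Finally, intersecting the identification $I_e^\sD = (\n^{[p^e]}:\b_e)/J$ over all $e > 0$ gives $P_\sD = P_{\b_\bullet} \cdot R$. Because $P_{\b_\bullet} \supseteq J$, the quotients $R/P_\sD$ and $S/P_{\b_\bullet}$ coincide, so their Krull dimensions agree. Substituting into the definitions of $\sdim$ and $r_F$ from Section~\ref{sec:gener-f-splitt}, together with the relation $a_e(R,\sD) = p^{e(\alpha(R) - \alpha(S))} a_e^{\b_\bullet}$ just established, the exponent of $p^e$ in the normalizing denominator shifts by exactly $\alpha(R) - \alpha(S)$, which cancels, giving $r_F(R,\sD) = r_F(S,\b_\bullet)$ and $\sdim(R,\sD) = \sdim(S,\b_\bullet)$. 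The main obstacle I anticipate is being careful with the Fedder correspondence at the level of individual elements rather than ideals (the step $\phi_x(F^e_* r) \in \m \iff \tilde r \cdot \b_e \subseteq \n^{[p^e]}$), since one must quantify over all $x \in \b_e$ simultaneously to convert the pointwise condition on $\tilde r$ into an ideal-theoretic colon statement.
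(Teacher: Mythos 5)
Your proof is correct and takes essentially the same route as the paper's: both lift $\sD$ to $S$ via the Fedder/dual-generator correspondence, identify $I_e^{\sD}$ with $(\n^{[p^e]}:\b_e)/J$, and then conclude from Proposition~\ref{prop:charofsplittingnumbers} and Lemma~\ref{lem:compsinregrings}. The paper packages the lift as an intermediate Cartier subalgebra $\widetilde\sD$ on $S$ and invokes Lemmas~\ref{lem:idealstoalgebras} and~\ref{lem:passtoquotients}, whereas you unpack that same correspondence by hand in terms of the fixed generator $\Phi$.
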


\begin{proof}
For each $e>0$, if we have $\phi \in \sC_{e}^{S}$ with $\phi(J^{1/p^{e}}) \subseteq J$, we will denote the induced $p^{-e}$-linear map on $R = S/J$ as in Definition~\ref{def:compatideal} by $\phi_{J}$.  Letting
\[
\widetilde\sD_{e} = \{ \, \phi \in \sC_{e}^{S} \; | \; \phi(J^{1/p^{e}}) \subseteq J \mbox{ and } \phi_{J} \in \sD_{e} \, \} \, \, ,
\]
it is easy to see that $\widetilde\sD = \oplus_{e \geq 0} \widetilde\sD_{e}$ is a Cartier subalgebra on $S$ by using that $\sD$ is a Cartier subalgebra on $R$.  Using Lemma~\ref{lem:idealstoalgebras}, $\widetilde\sD$ comes from an $F$-graded system of ideals in $\b_{\bullet}$ on $S$.  Note that $J$ is $\widetilde\sD$-compatible, so that $\b_{e} \subseteq (J^{[p^{e}]}:J)$ for all $e > 0$ by Lemma~\ref{lem:compsinregrings}.  By Lemma~\ref{lem:passtoquotients}, we have $a_{e}^{\sD} = a_{e}^{\widetilde\sD} = a_{e}^{\b_{\bullet}}$ and $P_{\sD} = P_{\widetilde\sD} \cdot R = P_{\b_{\bullet}} \cdot R$,  so the remainder of the Theorem follows immediately from the calculations shown in Lemma~\ref{lem:compsinregrings}.
\end{proof}

The following corollary originally appeared in \cite[Discussion after Theorem 4.2]{AberbachEnescuStructureOfFPure} and in \cite[Proposition 3.1]{EnescuYaoLowerSemicontinuity} (where it is done in the non-$F$-finite case).  However, since it follows easily from what is done above we state it here to illustrate the meaning of Theorem~\ref{thm:FedderComputationForFGradedSystems}.

\begin{corollary}[Fedder-type criterion for $F$-splitting numbers and $F$-signature] \cite[Discussion after Theorem 4.2]{AberbachEnescuStructureOfFPure} \cite[Proposition 3.1]{EnescuYaoLowerSemicontinuity}
\label{cor:fedcritforsplitnumbersandsignature}
  If $(S, \n)$ is an $F$-finite regular local ring and $R = S/J$ is
  a quotient ring of dimension $d$ for some non-zero ideal $J$, then
  the $F$-splitting numbers of $R$ can be computed on $S$ as
\[
\frac{1}{p^{e\alpha(R)}}a_{e}(R) = \length_{S}\left( S \big/ \left(
    \n^{[p^{e}]}:(J^{[p^{e}]}:J)\right)\right) = \length_{S}\left(
  (J^{[p^{e}]}:J) \big/ \left( (J^{[p^{e}]}:J) \cap
    \n^{[p^{e}]}\right) \right) \, \, .
\]
In particular,
$F$-signature of $R$ can be expressed as
\[
s(R) = \lim_{e \to \infty} \frac{1}{p^{ed}}\length_{S}\left(  S \big/ \left(
    \n^{[p^{e}]}:(J^{[p^{e}]}:J)\right) \right) = \lim_{e \to \infty} \frac{1}{p^{ed}} \length_{S}\left(
  (J^{[p^{e}]}:J) \big/ \left( (J^{[p^{e}]}:J) \cap
    \n^{[p^{e}]}\right) \right) \, \, .
\]
\end{corollary}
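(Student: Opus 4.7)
The plan is to apply Theorem~\ref{thm:FedderComputationForFGradedSystems} to the full Cartier algebra $\sD = \sC^R$ on $R$. That theorem produces an $F$-graded system $\b_\bullet$ of ideals in $S$ whose numerics match the $F$-splitting numbers, $F$-signature, and splitting prime of $R$ itself. The main task is to identify $\b_e$ explicitly as $(J^{[p^e]}:J)$; once this is done, the two displayed formulas of the corollary follow by direct substitution into the conclusion of Theorem~\ref{thm:FedderComputationForFGradedSystems}, which in turn rests on Lemma~\ref{lem:compsinregrings}.

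To carry out this identification, I would revisit the construction of $\widetilde\sD$ in the proof of Theorem~\ref{thm:FedderComputationForFGradedSystems}. There we set $\widetilde\sD_e = \{\phi \in \sC^S_e \mid \phi(J^{1/p^e}) \subseteq J \text{ and } \phi_J \in \sD_e\}$. Specializing to $\sD = \sC^R$ makes the second condition automatic, since every induced $p^{-e}$-linear map on $R$ lies in the full Cartier algebra. Thus $\widetilde\sD_e$ reduces to the set of all $\phi \in \sC^S_e$ with $\phi(J^{1/p^e}) \subseteq J$. Using the $S^{1/p^e}$-module generator $\Phi^e$ of $\sC^S_e$ from the proof of Lemma~\ref{lem:compsinregrings}, every such map is of the form $\Phi^e(x^{1/p^e}\cdot\blank)$ for a unique $x \in S$, and Fedder's criterion (the chain of equivalences at the start of the proof of Lemma~\ref{lem:compsinregrings}) yields $\Phi^e((xJ)^{1/p^e}) \subseteq J$ if and only if $xJ \subseteq J^{[p^e]}$, i.e.\ $x \in (J^{[p^e]}:J)$. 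Therefore $\widetilde\sD_e = \sC^S_e \cdot F^e_*(J^{[p^e]}:J)$, and the correspondence in Lemma~\ref{lem:idealstoalgebras} gives $\b_e = (J^{[p^e]}:J)$, as desired.

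With this identification in hand, substituting $\b_e = (J^{[p^e]}:J)$ into the formulas supplied by Theorem~\ref{thm:FedderComputationForFGradedSystems} gives
\[
\tfrac{1}{p^{e\alpha(R)}}\, a_e(R) = \length_S\bigl(S/(\n^{[p^e]}:(J^{[p^e]}:J))\bigr) = \length_S\bigl((J^{[p^e]}:J)/\bigl((J^{[p^e]}:J)\cap\n^{[p^e]}\bigr)\bigr),
\]
which is the first displayed formula of the corollary. Dividing through by $p^{ed}$ and letting $e \to \infty$, the existence of the limit is guaranteed by Theorem~\ref{thm:existence}, and the definition $s(R) = \lim_{e\to\infty} a_e(R)/p^{e(d+\alpha(R))}$ immediately yields the second displayed formula.

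The only real obstacle is the careful bookkeeping of Fedder's duality identification $\sC^S_e \cong F^e_*S$: one must verify that, under the generator $\Phi^e$, the sub-$F^e_*S$-module of maps descending to $R$ is exactly $F^e_*(J^{[p^e]}:J)$. This is already encoded in Lemma~\ref{lem:compsinregrings}, so no genuinely new input is needed beyond what the paper has set up — consistent with the author's remark that the result follows easily from Theorem~\ref{thm:FedderComputationForFGradedSystems}.
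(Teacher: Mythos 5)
Your proof is correct and follows the paper's approach exactly: the paper's proof is the single sentence that the corollary follows by taking $\sD = \sC^R$ in Theorem~\ref{thm:FedderComputationForFGradedSystems}, and what you have done is unwind that remark by verifying (via the $\widetilde\sD_e$ construction and Fedder's duality from Lemma~\ref{lem:compsinregrings}) that the associated $F$-graded system is $\b_e = (J^{[p^e]}:J)$. The identification and the subsequent substitution, including noting $\alpha(R)=\alpha(S)$ since $R$ and $S$ share a residue field, are exactly the implicit content of the paper's argument.
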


\begin{proof}
The Corollary follows at once by taking $\sD = \sC^{R}$ in Theorem~\ref{thm:FedderComputationForFGradedSystems}.
\end{proof}

\begin{corollary}
  \label{cor:attightclosuretest}
 Suppose $(S, \n)$ is an $F$-finite regular local ring of dimension $n$. If $\a$ is a non-zero ideal and $t \in \R_{>0}$, then all ideals of $S$ are $\a^{t}$-tightly closed (see \cite{HaraYoshidaGeneralizationOfTightClosure}) if and only if
\[
s(S,\a^{t}) = \lim_{e \to \infty} \frac{1}{p^{ne}}\length_{S}\left( S \big/ \left( \n^{[p^{e}]} : \a^{\lceil t(p^{e}-1) \rceil}\right)\right) = \lim_{e \to \infty} \frac{1}{p^{ne}}\length_{S}\left(\a^{\lceil t(p^{e}-1) \rceil} \big/ \left( \a^{\lceil t(p^{e}-1) \rceil} \cap \n^{[p^{e}]}\right) \right) > 0 \, \, .
\]
\end{corollary}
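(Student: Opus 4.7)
The plan is to specialize the $F$-graded system $\b_e = \a^{\lceil t(p^e-1)\rceil}$ from Example~\ref{ex.TwoExamplesOfFGraded}(b) to the regular local ring $S$ and then combine the Fedder-type calculation of Lemma~\ref{lem:compsinregrings} with Theorems~\ref{thm:existence} and~\ref{thm:positivity}. I would set $\sD = \sC^{\b_{\bullet}}$ for the associated Cartier subalgebra on $S$ produced by Lemma~\ref{lem:idealstoalgebras}, noting that by convention $s(S,\a^t) = s(S,\sD)$.

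Since $(S,\n)$ is regular, Lemma~\ref{lem:compsinregrings} applies directly and yields, for each $e > 0$,
\[
a_e^{\sD} \;=\; p^{e\alpha(S)}\length_S\!\left(S/(\n^{[p^e]}:\a^{\lceil t(p^e-1)\rceil})\right) \;=\; p^{e\alpha(S)}\length_S\!\left(\a^{\lceil t(p^e-1)\rceil}/(\a^{\lceil t(p^e-1)\rceil} \cap \n^{[p^e]})\right).
\]
Dividing by $p^{e(n+\alpha(S))}$ and invoking the existence of the $F$-signature limit from Theorem~\ref{thm:existence} immediately gives both explicit limit expressions for $s(S,\a^t)$ in the statement.

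For the equivalence with $\a^t$-tight closure, I would apply Theorem~\ref{thm:positivity}, which reduces the question to showing that $(S,\sD)$ is $F$-regular (in the sense of this paper) precisely when every ideal of $S$ is $\a^t$-tightly closed. Unwinding the definition: $(S,\sD)$ is $F$-regular if and only if for every $0 \neq c \in S$ there exist $e > 0$ and $\phi \in \sD_e$ with $\phi(F^e_* c) = 1$. Since $\sD_e = \sC^S_e \cdot F^e_* \b_e$ and $S$ is regular, the Fedder-style argument used in the proof of Lemma~\ref{lem:compsinregrings} shows such a $\phi$ exists if and only if $c \cdot \a^{\lceil t(p^e-1)\rceil} \not\subseteq \n^{[p^e]}$ for some $e$. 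This is exactly strong $F$-regularity of the pair $(S,\a^t)$ in the sense of \cite{HaraYoshidaGeneralizationOfTightClosure}.

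The one step requiring attention is the final translation between $F$-regularity of $(S,\sD)$ and the assertion that every ideal of $S$ be $\a^t$-tightly closed. For $S$ regular, the equivalence of weak and strong $F$-regularity for pairs is part of the standard dictionary treated in \cite{HaraYoshidaGeneralizationOfTightClosure} and reinterpreted through the Cartier algebra formalism in \cite{SchwedeTestIdealsInNonQGor, BlickleTestIdealsViaAlgebras, SchwedeSharpTestElements}; once invoked, the corollary follows at once.
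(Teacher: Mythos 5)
Your proposal is correct and follows essentially the same approach as the paper, which deduces the corollary from Theorem~\ref{thm:positivity} via the $F$-graded system of Example~\ref{ex.TwoExamplesOfFGraded}(b) and delegates the translation to $\a^t$-tight closure to \cite[Section 3.1]{BlickleSchwedeTakagiZhang}. You have simply spelled out the Fedder-type identification of Lemma~\ref{lem:compsinregrings} (yielding $I_e^{\sD} = (\n^{[p^e]}:\b_e)$) and the unwinding of $F$-regularity of $(S,\sC^{\b_\bullet})$ into the Hara--Yoshida notion of strong $F$-regularity, which the paper's terse citation encapsulates.
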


\begin{proof}
  Using the results of \cite[Section 3.1]{BlickleSchwedeTakagiZhang}, the Corollary follows at once by taking the $F$-graded system from Example~\ref{ex.TwoExamplesOfFGraded}~(b).
\end{proof}

\begin{remark}
\label{rmk:prodofideals}
  More generally, if $\ba_1, \dots, \ba_s \subseteq S$ are non-zero
  ideals and $t_1, \dots, t_s \in \R_{\geq 0}$, then the sequence of
  ideals $\b_{e} = (\a_{1}^{\lceil t_{1}(p^{e}-1) \rceil}) \cdots
  (\a_{s}^{\lceil t_{s}(p^{e}-1) \rceil})$ forms an $F$-graded
  sequence. When $S$ is regular, the analogous
  statement to Corollary~\ref{cor:attightclosuretest} is valid for
  $s(S,\a_{1}^{t_{1}}\cdots \a_{s}^{t_{s}})$.
\end{remark}

\subsection{Examples}
\label{sec:some-examples}
\subsubsection{Divisor pairs and triples $(R, \Delta, \a^{t})$.}
\label{sec:divisor-pairs}
We have repeatedly seen above how Cartier subalgebras arise naturally in the context of a change of rings.  Furthermore, in Example \ref{ex.TwoExamplesOfFGraded}(b), we have shown how they can be used to study pairs $(R, \a^{t})$ where $\a \subseteq R$ is an ideal and $t \in \R_{\geq 0}$.  Let us now discuss how Cartier subalgebras can also be used to study the divisor pairs that appear throughout birational algebraic geometry. We refer the reader to \cite[Appendix B]{SchwedeTuckerTestIdealSurvey} for a brief algebraic summary of the divisor notation used below.

Suppose that $(R, \m)$ is an $F$-finite local normal domain and $\Delta$ is an effective $\R$-divisor on $\Spec(R)$.  For each $e \in \Z_{\geq 0}$, we have an inclusion $R \subseteq R(\lceil (p^{e} - 1) \Delta \rceil)$ (using $\Delta \geq 0$) where $R(\lceil (p^e - 1) \Delta \rceil)$ denotes the global sections of $\O_{\Spec (R)}(\lceil (p^e - 1)\Delta \rceil)$. In particular, we
may view
\[
\Hom_R(F^e_* R( \lceil (p^e - 1) \Delta \rceil), R) \subseteq \Hom_{R}(F^{e}_{*}R,R) = \sC_{e}^{R}
\]
and define
\[
\sC^{\Delta}_e = \Hom_R(F^e_* R( \lceil (p^e - 1) \Delta \rceil), R) \, \, .
\]
Again, it is straightforward to check that $\sC^{\Delta} = \oplus_{e \geq 0} \sC^{\Delta}_{e}$ is indeed a Cartier subalgebra.
If in addition we are given a non-zero ideal $\a \subseteq R$ and a coefficient $t \in \R_{\geq 0}$, we can incorporate the $F$-graded system in Example~\ref{ex.TwoExamplesOfFGraded}(b) as well.  Specifically, setting
\[
\sC^{\Delta, \ba^t}_e =  \sC^{\Delta}_e \cdot  F^e_* \ba^{\lceil t(p^e - 1) \rceil}
\]
gives rise to yet another Cartier subalgebra $\sC^{\Delta,\a^{t}} = \oplus_{e \geq 0}\sC^{\Delta,\a^{t}}_{e}$.  As before, we shall conflate $(R, \Delta)$ and $(R, \Delta, \a^{t})$ with corresponding pairs $(R, \sC^{\Delta})$ and $(R, \sC^{\Delta, \a^{t}})$, respectively, in the notation and terminology appearing throughout.

A number of variants on the Cartier subalgebra $\sC^{\Delta, \a^{t}}$ appear throughout the literature.  For example, one might consider replacing $\lceil (p^{e} - 1) \Delta \rceil$ with $\lceil p^{e}\Delta \rceil$ and $\a^{\lceil t(p^{e} -1)\rceil}$ with $\a^{\lceil tp^{e} \rceil}$ in the construction above.  The subsequent Cartier subalgebra may in fact be quite different; it often happens that the former is $F$-pure while the latter is not.  Nevertheless, it follows from the Lemma below that their $F$-signatures coincide.

\begin{lemma}
\label{lem:perturbalgebrasbyc}
Consider a Cartier subalgebra $\sD$ on an $F$-finite local ring $(R,\m)$ with dimension~$d$. Suppose that $\sE \subseteq \sD$ is another Cartier subalgebra with the property that there exists $c \in R$ not in any minimal prime satisfying $\sD_e \cdot F^{e}_{*}c \subseteq \sE_e$ for all $e > 0$.  Then
\[
s(R, \sD) = s(R, \sE)
\]
\end{lemma}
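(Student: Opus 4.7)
The inclusion $\sE \subseteq \sD$ immediately yields $I_e^{\sE} \supseteq I_e^{\sD}$ and hence $a_e^{\sE} \leq a_e^{\sD}$ via Proposition~\ref{prop:charofsplittingnumbers}, so one direction of the equality is automatic. The content of the lemma is that the hypothesis $\sD_e \cdot F^e_*c \subseteq \sE_e$ reverses this inequality asymptotically; my plan is to translate the hypothesis into an ideal-theoretic containment and then bound the resulting length defect using that $c$ avoids all minimal primes.

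For the key step, suppose $r \in I_e^{\sE}$. For any $\phi \in \sD_e$ the composition $\phi \cdot F^e_*c$ lies in $\sE_e$ by hypothesis, so evaluating at $F^e_* r$ yields $\phi(F^e_*(cr)) \in \m$. Since $\phi$ was arbitrary this gives $cr \in I_e^{\sD}$; in other words $c \cdot I_e^{\sE} \subseteq I_e^{\sD}$, or equivalently $I_e^{\sE} \subseteq (I_e^{\sD}:c)$. The quotient $(I_e^{\sD}:c)/I_e^{\sD}$ is the kernel of multiplication by $c$ on the finite-length module $R/I_e^{\sD}$, and by additivity of length it has the same length as the cokernel $R/(I_e^{\sD}+cR)$. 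Using $\m^{[p^e]} \subseteq I_e^{\sD}$ from Lemma~\ref{lem:propsofIe}, this is at most $\length_R(R/(\m^{[p^e]}+cR))$. Since $c$ lies in no minimal prime, $\dim(R/cR) \leq d-1$, and the standard Kunz bound for Frobenius powers of $\m$-primary ideals on $R/cR$ gives $\length_R(R/(\m^{[p^e]}+cR)) \leq C\,p^{e(d-1)}$ for a constant $C$ independent of $e$.

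Assembling the estimates and invoking Proposition~\ref{prop:charofsplittingnumbers} once more, I obtain
\[
0 \;\leq\; a_e^{\sD} - a_e^{\sE} \;=\; p^{e\alpha(R)}\length_R(I_e^{\sE}/I_e^{\sD}) \;\leq\; C\,p^{e(d+\alpha(R)-1)},
\]
so the normalized splitting numbers for $\sD$ and $\sE$ differ by $O(p^{-e})$ and the two limits must coincide, provided they are computed over the same cofinal index set.

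The main obstacle that remains is to match the sub-semigroups $\Gamma_{\sE} \subseteq \Gamma_{\sD}$ over which the $F$-signatures are defined. I would dispose of this using Lemma~\ref{lem:notfreg} and Theorem~\ref{thm:positivity}: if $(R,\sD)$ is not $F$-regular, then neither is $(R,\sE)$ (trivially, as $\sE \subseteq \sD$), and both signatures vanish. Otherwise $(R,\sD)$ is $F$-regular, and for any $c'$ avoiding all minimal primes the product $cc'$ still avoids all minimal primes; applying $F$-regularity of $\sD$ to $cc'$ produces some $\phi \in \sD_e$ with $\phi(F^e_*(cc'))=1$, and then $\phi \cdot F^e_* c \in \sE_e$ sends $F^e_* c'$ to $1$, so $(R,\sE)$ is also $F$-regular. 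Theorem~\ref{thm:positivity} then makes both signatures strictly positive, and the $O(p^{-e})$ estimate above forces $a_e^{\sE} > 0$ for all sufficiently large $e \in \Gamma_{\sD}$, so $\Gamma_{\sE}$ and $\Gamma_{\sD}$ agree cofinally and the two limits are equal.
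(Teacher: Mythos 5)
Your core argument is exactly the paper's: the containments $I_e^{\sD} \subseteq I_e^{\sE} \subseteq (I_e^{\sD}:c)$, the identification of $\length_R\bigl((I_e^{\sD}:c)/I_e^{\sD}\bigr)$ with $\length_R\bigl(R/(I_e^{\sD}+cR)\bigr)$ via the kernel/cokernel count for multiplication by $c$, and the resulting $O(p^{e(d-1)})$ bound using $\m^{[p^e]} \subseteq I_e^{\sD}$ and $\dim R/cR < d$. Where you diverge is in the final step of reconciling the index sets over which the two limits are taken. The paper simply sets $n = n_{\sD} n_{\sE}$ and computes both signatures as limits over the common cofinal sequence $\{mn\}_{m \gg 0}$, which lies in $\Gamma_{\sD} \cap \Gamma_{\sE}$ by the numerical semigroup structure and Theorem~\ref{thm:existence}; this keeps the proof self-contained and avoids any appeal to positivity. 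You instead invoke Lemma~\ref{lem:notfreg} and Theorem~\ref{thm:positivity}, splitting into the non-$F$-regular case (both signatures vanish) and the $F$-regular case, where you give a nice direct argument that $(R,\sE)$ inherits $F$-regularity from $(R,\sD)$ via the element $c$ and then use positivity to get cofinality. Both arguments are correct; yours pulls in the heaviest theorem in the paper at a point where the lighter semigroup-index observation suffices, but it does yield the $F$-regularity transfer for $\sE$ as a useful byproduct. A small remark: you don't actually need full cofinality of $\Gamma_{\sE}$ in $\Gamma_{\sD}$ for the limits to match — it is enough that $\Gamma_{\sE}$ is an infinite subset of $\Gamma_{\sD}$ (which $F$-purity of $(R,\sE)$ already gives), since a convergent sequence and any infinite subsequence share the same limit, and your $O(p^{-e})$ estimate then closes the gap between $a_e^{\sD}$ and $a_e^{\sE}$ along $\Gamma_{\sE}$.
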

\begin{proof}
The inclusions $\sD_e \cdot F^{e}_{*}c \subseteq \sE_e \subseteq \sD_{e}$ immediately give
\[
I_{e}^{\sD} \subseteq I_{e}^{\sE} \subseteq \{ \, r \in R \; | \; \phi(F^{e}_{*}(cr)) \in \m \mbox{ for all } \phi \in \sD_{e}\, \} = (I_{e}^{\sD} : c )
\]
for all $e > 0$. Thus, we have
\[
\length_{R}(I_{e}^{\sE}/I_{e}^{\sD}) \leq \length_{R}\left( (I_{e}^{\sD} : c) \big/ I_{e}^{\sD}\right) = \length_{R} \left( R \big/ \langle I_{e}^{\sD},c \rangle\right)
\]
by noting that the kernel and cokernel  of the endomorphism $R/I_{e}^{\sD} \to[\cdot c] R/I_{e}^{\sD}$ have equal length (\textit{cf.} \cite[Page 8]{VraciuNewatTightClosure}).
As $\m^{[p^{e}]} \subseteq I_{e}^{\sD}$ and $\dim(R/\langle c \rangle) < d$, there exists a positive constant $C$ such that
\[
\length_{R}(I_{e}^{\sE}/I_{e}^{\sD})  \leq  \length_{R} \left( R \big/ \langle I_{e}^{\sD},c \rangle\right) \leq \length_{R}(R/\langle \m^{[p^{e}]},c\rangle) \leq C p^{e(d-1)}
\]
for all $e > 0$.  If we set $n = n_{\sD}n_{\sE}$, it follows that
\[
s(R, \sD) = \lim_{m \to \infty} \frac{1}{p^{nmd}}\length_{R}(R/I_{nm}^{\sD}) = \lim_{m \to \infty} \frac{1}{p^{nmd}}\length_{R}(R/I_{nm}^{\sE}) = s(R, \sE)
\]
as desired.
\end{proof}

\begin{remark}
  We caution the reader that it is not possible to replace the $F$-signature by the $F$-splitting ratio in Lemma~\ref{lem:perturbalgebrasbyc}.
\end{remark}

Lemma~\ref{lem:perturbalgebrasbyc} is particularly useful when computing the $F$-signature of the triples $(R, \Delta, \a^{t})$ considered above.  As mentioned previously, taking a non-zero $c \in \a^{\lceil t \rceil}$ with $\Div(c) \geq \Delta$ gives $c \a^{\lceil t(p^{e}-1) \rceil} \subseteq \a^{\lceil tp^{e} \rceil} \subseteq \a^{\lceil t(p^{e}-1)\rceil}$ and $R( \lceil (p^{e}-1)\Delta\rceil) \subseteq R( \lceil p^{e}\Delta\rceil) \subseteq c \cdot R( \lceil (p^{e}-1)\Delta\rceil)$,
so that we are free to consider the Cartier subalgebra
\[
\oplus_{e \geq 0} \Hom_{R}(F^{e}_{*}R( \lceil p^{e}\Delta\rceil),R) \cdot  F^e_* \ba^{\lceil tp^e \rceil}
\]
when computing $s(R, \Delta, \a^{t})$.  Alternatively,
recall that the integral closure of $\a$ with coefficient $\lambda \in \R_{\geq 0}$ is denoted $\overline{\a^{\lambda}}$, and we have $x \in \overline{\a^{\lambda}}$ if and only if $\nu(x) \geq \lambda \cdot \nu(\a)$ for all valuations $\nu \: \Frac(R)\setminus\{0 \} \to \Z$ with $\nu(R) \geq 0$. It is immediate that
$
\overline{\a^{\lambda}} = \bigcup_{\lambda< \mu \in \Q} \overline{\a^{\mu}} \, \, ,
$
and the Noetherian property of $R$ implies that this union stabilizes for some $\mu \in \Q$ (see \cite[Section 10.6]{HunekeSwansonIntegralClosure} for a detailed treatment in this setting).
The tight-closure {B}rian\c con-{S}koda theorem  (see \cite[Theorem 5.4]{HochsterHunekeTC1}) implies the existence of a non-zero $c \in R$ such that $c \cdot \overline{\ba^{t(p^e-1)}} \subseteq \ba^{\lceil t(p^e - 1) \rceil} \subseteq \overline{\ba^{t(p^e-1)}}$ for all $e \geq 0$, so that we might even consider the Cartier subalgebra
\[
\oplus_{e \geq 0} \Hom_{R}(F^{e}_{*}R( \lceil (p^{e}-1) \Delta\rceil),R) \cdot  F^e_* \left( \, \overline{\ba^{t(p^e - 1)}} \, \right)
\]
when computing $s(R,\Delta,\a^{t})$.

We conclude this subsection with the computation of the $F$-signature of a fundamentally important divisor pair.

\begin{example}
\label{ex:SNCDivisor}
Suppose that $R = k\llbracket x_1, \dots, x_n \rrbracket $ where $k =
k^{p}$ is a perfect field.  Consider the $\bR$-divisor $\Delta = t_1
\Div(x_1) + \dots + t_n \Div(x_n)$ where $t_i \in [0, 1)$.  It is
straightforward to see that $ s(R, \Delta) = s(R,\langle x_{1}
\rangle^{t_{1}}\cdots \langle x_{n} \rangle^{t_{n}})$.  By
Remark~\ref{rmk:prodofideals}, we have
\[
s(R, \Delta) = \lim_{e \to \infty} \frac{1}{p^{en}}\length_{R}\left( R \big/ (\langle
  x_{1}^{p^{e}}, \ldots, x_{n}^{p^{e}} \rangle : x_{1}^{\lceil
    t_{1}(p^{e}-1)\rceil} \cdots x_{n}^{\lceil
    t_{n}(p^{e}-1)\rceil})\right) \, \, .
\]
Since $p^{e} - \lceil t_{i}(p^{e}-1) \rceil = \lfloor (1-t_{i})p^{e} +
t_{i}\rfloor$, we have
\[
(\langle
  x_{1}^{p^{e}}, \ldots, x_{n}^{p^{e}} \rangle : x_{1}^{\lceil
    t_{1}(p^{e}-1)\rceil} \cdots x_{n}^{\lceil
    t_{n}(p^{e}-1)\rceil}) = \langle x_{1}^{\lfloor (1-t_{1})p^{e} +
t_{1}\rfloor}, \ldots, x_{n}^{\lfloor (1-t_{n})p^{e} +
t_{n}\rfloor}\rangle \, \, .
\]
Thus, we compute
\[
\length_{R}\left( R \big/ (\langle
  x_{1}^{p^{e}}, \ldots, x_{n}^{p^{e}} \rangle : x_{1}^{\lceil
    t_{1}(p^{e}-1)\rceil} \cdots x_{n}^{\lceil
    t_{n}(p^{e}-1)\rceil})\right) = \prod_{i = 1}^{n} \lfloor (1-t_{i})p^{e} +
t_{i}\rfloor
\]
in order to conclude
\[
s(R, {\Delta}) = (1 - t_1) \cdots (1 - t_n).
\]
\end{example}

\subsubsection{Whitney's Umbrella}
\label{sec:whitneys-umbrella}
We show in this example that the $F$-splitting ratio of a local ring
may be strictly smaller than the $F$-signature of the quotient by the $F$-splitting prime.
Assume $p \neq 2$ and consider the two dimensional local ring
\[
R =
\mathbb{F}_{p}\llbracket x,y,z \rrbracket /\langle x^{2} - y^{2}z \rangle \, \, .
\]
If $S = \mathbb{F}_{p}\llbracket x,y,z \rrbracket $ and $\overline{R}
= R / P_{R}$, we will show $r_{F}(R) = 1/2$ while $s(\overline{R}) = 1$.
Set $\Phi \: F_{*}S \to S$ to denote the homomorphism
mapping $F_{*}(x^{p-1}y^{p-1}z^{p-1}) \mapsto 1$, and all of the other
monomials in the free basis $\{F_{*}(x^{i}y^{j}z^{k})\}_{0\leq i,j,k
  \leq p^{e}-1}$ of the free $S$-module $F_{*}S$ to zero.  If $f =
x^{2}-y^{2}z$, then $\Phi(F_{*}f^{p-1} \cdot \blank)$ induces a
homomorphism $\phi \in \Hom_{R}(F_{*}R,R)$ such that $\sC^{R}
=\sC^{\phi}$.  In other words, $\phi^{e}$ is an $F_{*}^{e}R$-module generator for $\Hom_{R}(F^{e}_{*}R,R)$ for all $e > 0$.  Since
\[
f^{p-1} = {p-1 \choose 2} x^{p-1}y^{p-1}z^{(p-1)/2} \mod \langle x^{p}, y^{p} \rangle
\]
and ${p-1 \choose 2} \neq 0 \mod p$, it is easy to see that $\langle
x, y \rangle \subseteq R$ is $\phi$-compatible thus inducing a map
$\overline{\phi} \in \Hom_{\overline{R}}(F_{*}\overline{R},\overline{R})$.  Now, the (total) Cartier
algebra of the ring $\overline{R} = k\llbracket x,y,z \rrbracket
/\langle x, y \rangle \cong k\llbracket z \rrbracket $ is generated by
the map which sends $F_{*}z^{p-1}$ to $1$ and the lower order monomials
$F_{*}z^i$ to zero for $0 \leq i < p-1$.  It is easy to see that
$\overline{\phi}$ sends $F_{*}z^{(p-1)/2}$ to $1$ and all other
$F_{*}z^i$ to zero for $0 \leq i \leq p-1$, $i \neq {p-1 \over 2}$.
Thus, the Cartier subalgebra $\sC^{1/2\Div(z)} \subseteq
\sC^{k\llbracket z \rrbracket }$ coincides with the Cartier subalgebra
generated by $\overline{\phi}$.
Since $(k\llbracket z \rrbracket, {1 \over 2} \Div(z))$ is
$F$-regular, it immediately follows that $P_{R} = \langle x, y \rangle
\cdot R$ is the splitting prime by Lemma \ref{lem.BijectionBetweenCompatibleIdeals}, \cf \cite{SchwedeFAdjunction}.
Furthermore, from example
Example~\ref{ex:SNCDivisor}, we see that
\[
r_{F}(R) = r_{F}(R, \sC^{\phi}) = r_{F}(\overline{R}, \overline{\phi}) = r_{F}(k\llbracket z \rrbracket,
{1 \over 2} \Div(z)) = s (k\llbracket z \rrbracket,
{1 \over 2} \Div(z)) = \frac{1}{2} \, \, .
\]
In particular, we have $r_{F}(R) = 1/2 < 1 = s(k\llbracket z \rrbracket)= s(\overline{R})$.

\subsubsection{A ramified cover of the cusp in characteristic three.}
\label{sec:ramified-cover-cusp}
Consider the ring\footnote{
Using Macaulay2 \cite{M2}, one can check $R \cong \bF_3\llbracket
x,y,z^2 - x^3 + y^2, z(z^2 - x^3 + y^2)\rrbracket \subseteq
\bF_3\llbracket x,y,z\rrbracket$.
}
\[
R = \bF_3\llbracket a,b,c,d \rrbracket/\langle a^3 c^2 -b^2c^2 + c^3 -
d^2 \rangle
\]
 so that $R = S / \langle h \rangle$ where $S = \mathbb{F}_{3}\llbracket a,b,c,d \rrbracket$ and $h =
 a^3 c^2 -b^2c^2 + c^3 - d^2$.  Since
\[
  h^2 = a^6 c^4 +a^3b^2 c^4 + b^4 c^4 - a^3 c^5 + b^2c^5 + a^3c^2d^2
  + c^6 - b^2c^2d^2 + c^3d^2 + d^4
\not\in (a^3, b^3, c^3, d^3) \, \, ,
\]
we see that $R$ is $F$-pure using Fedder's criterion
\cite{FedderFPureRat}.  As in the previous example,
$F_{*}h^{2}$ corresponds to a
generator $\phi$ of $\Hom_R(F_* R, R)$ via \cite{FedderFPureRat}, so
that $\sC^{R} = \sC^{\phi}$.  It
is easy to see that $P = \langle c, d \rangle \subseteq R$ is
$\sC^{R}$-compatible, and the induced map $\overline{\phi} \in
\Hom_{\overline{R}}(F_{*}\overline{R},\overline{R})$ on $\overline{R}
= R/P$ corresponds on $\overline{R} = \bF_{3}\llbracket a,b \rrbracket$ to
pre-multiplying a generator of
$\Hom_{\overline{R}}(F_*\overline{R},\overline{R})$ by $F_{*}(a^3 -
b^2)$ (this is the coefficient of $c^2d^2$ in the expression for $h^{2}$
above).  Thus $\sC^{\overline{\phi}} = \sC^{{1 \over 2} \Div(a^3 -
  b^2)}$.  Now, by \cite[Example 4.3]{MustataTakagiWatanabeFThresholdsAndBernsteinSato}, \cf \cite{TakagiWatanabeFPureThresh, DanielThesis}, the $F$-pure threshold of $a^3 - b^2$ in characteristic $3$ is ${2 \over 3}$ and so
it follows that $(\bF_{3}\llbracket a,b\rrbracket, {1 \over 2}
\Div(a^3 - b^2))$ is (strongly) $F$-regular.  We conclude that $P$ is
the splitting prime of $R$, and $r_{F}(R) = r_{F}(R, \sC^{\phi}) =
r_{F}(\overline{R}, \sC^{\overline{\phi}}) =s(\bF_{3}\llbracket a,b\rrbracket, {1 \over 2}
\Div(a^3 - b^2))$.

Let us now compute $s(\bF_{3}\llbracket a,b\rrbracket, {1 \over 2}
\Div(a^3 - b^2)) = 1/6$ by taking a pair of finite covers.
First set $A = \bF_{3}\llbracket a,b \rrbracket \subseteq
\bF_3\llbracket a, d = b^{1/3}\rrbracket  = \bF_3\llbracket a, d \rrbracket = B$, and notice that $a^3 - b^2 = a^3 - d^6 = (a - d^2)^3$.  Since $B$ is a free $A$-module of rank $3$, we see that
\[
 \length_A\left( {A \over  \langle a^{3^e}, b^{3^e} \rangle : (a^3 -
     b^2)^{\lceil \frac{3^{e}}{2} \rceil}}\right) = {1 \over 3} \length_B\left({B \over \langle a^{3^e}, d^{3^{e+1}} \rangle : (a - d^2)^{3(3^e + 1)/2} }\right).
\]
Now consider $B \subseteq \bF_3\llbracket a, d, c = (a - d^2)^{1/2}
\rrbracket = \bF_e\llbracket d,c \rrbracket = C$ and observe that
\[
\length_A\left( {A \over  \langle a^{3^e}, b^{3^e} \rangle : (a^3 - b^2)^{\lceil \frac{3^{e}}{2} \rceil}}\right) = {1 \over 6} \length_C\left({C \over \langle (c^{2}+d^{2})^{3^e} , d^{3^{e+1}} \rangle : c^{3^{e+1} +3}}\right).
\]
Writing $\n = \langle  (c^{2}+d^{2}) ,
d^{3}\rangle$ and thus $ \n^{[3^{e}]} = \langle (c^{2}+d^{2})^{3^e}
, d^{3^{e+1}} \rangle$ for ease of notation, it is straightforward to check $\n : c^{3} = \langle c, d \rangle$.
Now, note that replacing $c^{3^{e+1}+3}$ by
$c^{3^{e+1}}$ in the formula above gives
\[
\n^{[3^{e}]}:c^{3^{e+1}} = (\n : c^{3})^{[3^{e}]} =  \langle
c^{3^{e}}, d^{3^{e}} \rangle \, \, .
\]
Thus, we compute
\[
\n^{[3^{e}]}:c^{3^{e+1}+3} = (\n^{[3^{e}]}:c^{3^{e+1}}):c^{3} = \langle c^{3^{e}}, d^{3^{e}} \rangle : c^{3} =
\langle c^{3^{e} - 3}, d^{3^{e}} \rangle
\]
so that we have
\[
\length_A\left( {A \over  \langle a^{3^e}, b^{3^e} \rangle : (a^3 -
    b^2)^{\lceil \frac{3^{e}}{2} \rceil}}\right) = {1 \over 6}\left(
  (3^{e}-3)(3^{e})\right) = {1 \over 6}\left( 3^{2e} - 3^{e+1} \right)
\, \, .
\]
Now, using Corollary~\ref{cor:attightclosuretest} together with the
discussion following
Lemma~\ref{lem:perturbalgebrasbyc}, we conclude
that
\[
r_{F}(R) = s\left( A,{1 \over 2}
\Div(a^3 - b^2)\right) = \lim_{e \to \infty}
\frac{1}{3^{2e}}\length_A\left( {A \over  \langle a^{3^e}, b^{3^e}
    \rangle : (a^3 - b^2)^{\lceil \frac{3^{e}}{2} \rceil}}\right) = \frac{1}{6} \, \, .
\]

\subsubsection{Monomial ideals.}
\label{sec:monomial-ideals}
\renewcommand{\bP}{{\bf P}}
In this section, we consider $S = k \llbracket x_{1}, \ldots, x_{n} \rrbracket$ where $k$ is an $F$-finite field of characteristic $p >0$.  If we have $u = (u_{1}, \ldots, u_{n}) \in \Z^{n}$, then $x^{u} = x_{1}^{u_{1}}\cdots x_{n}^{u_{n}}$ will denote the corresponding monomial in the variables $x_{1}, \ldots, x_{n}$.  If $\a \subseteq S$ is a monomial ideal, \textit{i.e.} is generated by monomials in $x_{1}, \ldots, x_{n}$, we can consider the Newton polyhedron $\bP_{\a} \subseteq \R^{n}$ of $\a$ given by the closed convex hull in $\R^{n}$ of the set
$
\{ \, u \in \Z^{ n} \; | \; x^{u} \in \a \}
$
of lattice points representing the monomials in $\a$.

It is well known that the integral closure $\overline{\a}$ is also a
monomial ideal and is determined by $\bP_{\a}$, so that $x^{u} \in
\overline{\a}$ if and only if $u \in \bP_{\a} \cap \Z^{ n}$.  If
$\lambda \in \R_{> 0}$, this is readily generalized to the statement
that $\overline{\a^{\lambda}}$ is a monomial ideal determined by
$\lambda \bP_{\a} = \{ \, \lambda x \; | \; x \in \bP_{\a} \, \}$, so
that $x^{u} \in \overline{\a^{\lambda}}$ if and only if $u \in \lambda
\bP_{\a} \cap \Z^{ n}$.  Indeed, the Rees valuations of $\a$ are the
monomial valuations obtained from the bounding hyperplanes of $\bP_{\a}$ \cite[Proposition
10.3.5]{HunekeSwansonIntegralClosure} (\textit{cf.} \cite[Section 11.3]{CoxLittleSchenckToric}) and
determine membership in
$\overline{\a^{\lambda}}$ \cite[Proposition
10.6.5]{HunekeSwansonIntegralClosure}.

\begin{theorem}
  \label{thm:monomialidealcalc}
Let
$S = k \llbracket x_{1}, \ldots, x_{n} \rrbracket$ where $k$ is an $F$-finite field of characteristic $p >0$.  Suppose $\a \subseteq S$ is a monomial ideal with Newton polyhedron $\bP_{\a} \subseteq \R^{ n}$ and $t \in \R_{> 0}$.  Then the $F$-signature of $(S, \a^{t})$
\[
s(S, \a^{t}) = \vol_{\R^{ n}}(t\bP_{\a} \cap [0,1]^{ n})
\]
equals the Euclidean volume of the intersection of $t \bP_{\a}$ with the unit cube $[0,1]^{ n} \subseteq \R^{n}$.
\end{theorem}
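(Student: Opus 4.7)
The plan is to invoke Corollary~\ref{cor:attightclosuretest} (via Lemma~\ref{lem:compsinregrings} applied to an appropriate $F$-graded system) and then analyze the resulting colon ideals combinatorially as a count of lattice points in Newton polyhedra. The key move is to replace the ordinary powers $\a^{\lceil t(p^e-1) \rceil}$ by the integral-closure powers $\overline{\a^{t(p^e-1)}}$, since only the latter are directly controlled by the Newton polyhedron $\bP_\a$.

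First, the tight-closure Brian\c{c}on--Skoda theorem invoked in the discussion after Lemma~\ref{lem:perturbalgebrasbyc} gives a non-zero $c \in S$ with $c \cdot \overline{\a^{t(p^e-1)}} \subseteq \a^{\lceil t(p^e-1)\rceil} \subseteq \overline{\a^{t(p^e-1)}}$ for every $e \geq 0$. A short convexity check shows that $\b_e := \overline{\a^{t(p^e-1)}}$ really is an $F$-graded system on $S$, and Lemma~\ref{lem:perturbalgebrasbyc} then identifies its $F$-signature with that of $(S, \a^t)$. Combining with Lemma~\ref{lem:compsinregrings}, one obtains
\[
s(S, \a^t) \;=\; \lim_{e \to \infty} \frac{1}{p^{ne}} \length_S\bigl( S \big/ (\n^{[p^e]} : \overline{\a^{t(p^e-1)}})\bigr) \, .
\]

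Next, since $\a$ is a monomial ideal the powers $\overline{\a^{t(p^e-1)}}$ are monomial too, with monomials indexed by the lattice points of $t(p^e-1)\bP_\a$. Hence the colon ideal is also monomial, and a monomial $x^u$ lies \emph{outside} the colon precisely when there exists $v \in t(p^e-1)\bP_\a \cap \Z^n_{\geq 0}$ with $u + v \in [0, p^e)^n$. Denoting the set of such $u \in [0, p^e)^n \cap \Z^n$ by $U_e$, the length above equals $|U_e|$.

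Finally, rescale via $y := u/p^e \in [0,1)^n \cap \tfrac{1}{p^e}\Z^n$ and $w := v/p^e$. The condition $u \in U_e$ becomes the existence of $w \in \tfrac{1}{p^e} \Z_{\geq 0}^n \cap \tfrac{t(p^e-1)}{p^e}\bP_\a$ with $w_i < 1 - y_i$ for all $i$. As $e \to \infty$ the polyhedra $\tfrac{t(p^e-1)}{p^e}\bP_\a$ converge to $t\bP_\a$ and the lattice $\tfrac{1}{p^e}\Z^n$ becomes dense. Since $t\bP_\a$ is upper-closed (stable under addition of $\R_{\geq 0}^n$), the existence of such a $w$ in the limit is governed exactly by whether $1-y$ lies in $t\bP_\a$: in the interior of $t\bP_\a$ the condition holds for all large $e$, while for $1-y$ outside $t\bP_\a$ no such $w$ exists. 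Noting that $\partial(t\bP_\a) \cap [0,1]^n$ has Lebesgue measure zero, a standard Riemann-sum argument then gives
\[
\lim_{e \to \infty} \frac{|U_e|}{p^{ne}} \;=\; \vol_{\R^n}\bigl( \{ y \in [0,1]^n : 1 - y \in t\bP_\a \} \bigr) \;=\; \vol_{\R^n}(t\bP_\a \cap [0,1]^n),
\]
where the last equality uses the measure-preserving involution $y \mapsto 1 - y$ on $[0,1]^n$. The main obstacle is making this Riemann-sum argument fully uniform: one must simultaneously control the Hausdorff convergence of $\tfrac{t(p^e-1)}{p^e}\bP_\a$ to $t\bP_\a$, the density of the lattice $\tfrac{1}{p^e}\Z^n$ near points of $t\bP_\a$, and the passage from the strict inequality $w_i < 1 - y_i$ to closed membership in $t\bP_\a$, while showing that the contribution from $y$ near $\partial(t\bP_\a)$ is negligible.
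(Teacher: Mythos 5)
Your overall strategy is the same as the paper's: pass to the $F$-graded system of integral closures using Lemma~\ref{lem:perturbalgebrasbyc}, then apply Lemma~\ref{lem:compsinregrings}, then count lattice points. Where you diverge is in two normalization choices, and both make your life harder than it needs to be. First, you use the exponent $t(p^e-1)$, which forces you to track the Hausdorff convergence of $\frac{t(p^e-1)}{p^e}\bP_\a$ to $t\bP_\a$; Lemma~\ref{lem:perturbalgebrasbyc} lets you replace this by $\overline{\a^{\,tp^e}}$, which scales \emph{exactly} as $tp^e\bP_\a = p^e\cdot(t\bP_\a)$, eliminating the perturbation entirely. Second, and more importantly, you work with the colon ideal $\n^{[p^e]}:\b_e$ and count monomials $x^u$ outside it, which produces a quantified condition (``there exists $v$ with $u+v\in[0,p^e)^n$'') and the attendant need for a careful Riemann-sum argument near $\partial(t\bP_\a)$ that you explicitly leave as an obstacle. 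Lemma~\ref{lem:compsinregrings} also gives the equal quantity $\length_S\bigl(\b_e/(\b_e\cap\n^{[p^e]})\bigr)$, and the monomials in \emph{that} quotient are indexed directly and unambiguously by $\Z^n \cap tp^e\bP_\a \cap [0,p^e]^n = \Z^n \cap p^e\bigl(t\bP_\a\cap[0,1]^n\bigr)$, with no existential quantifier and no boundary perturbation. At that point the limit is just the standard Ehrhart-type fact $\vol(\bQ)=\lim_q \#(q\bQ\cap\Z^n)/q^n$ for a polytope $\bQ$, and one is done. So the conclusion you reach is correct and the high-level idea matches the paper, but the technical obstacle you flag at the end is real within your formulation; the cleaner fix is not to push the Riemann-sum analysis through but to switch to the module-theoretic side of Lemma~\ref{lem:compsinregrings} and to the exponent $tp^e$, after which the obstacle disappears.
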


\begin{proof}
  Set $\n = \langle x_{1}, \ldots, x_{n} \rangle$.  By Lemma~\ref{lem:perturbalgebrasbyc} and Lemma~\ref{lem:compsinregrings}, we have that
\[
s(S, \a^{t}) = \lim_{e \to \infty} \frac{1}{p^{ne}} \length_{S} \left( \frac{\overline{\a^{tp^{e}}}}{\overline{\a^{tp^{e}}}\cap \n^{[p^{e}]}} \right) \, \, .
\]
As noted above, $\overline{\a^{tp^{e}}}$ is a monomial ideal determined by the property $x^{u} \in \overline{\a^{tp^{e}}}$ if and only if $u \in tp^{e}\bP_{\a} \cap \Z^{n}$.  Likewise, $\overline{\n^{[p^{e}]}}$ is a monomial ideal, and $x^{u} \not\in \n^{[p^{e}]}$ if and only if $u \in [0,p^{e}]^{n} \cap \Z^{n}$.  In particular, since the lengths appearing in the limit above are a quotient of monomial ideals, we need only count monomials to compute them.  We have
\[
\begin{array}{rcl}
\displaystyle \length_{S} \left( \frac{\overline{\a^{tp^{e}}}}{\overline{\a^{tp^{e}}}\cap \n^{[p^{e}]}} \right) &=& \# \{ \, u \in \Z^{n} \; | \; x^{u} \in \overline{\a^{tp^{e}}} \setminus \n^{[p^{e}]} \, \} \\
\phantom{\length_{S} \left( \frac{\overline{\a^{tp^{e}}}}{\overline{\a^{tp^{e}}}\cap \n^{[p^{e}]}} \right)} & = & \# \left( \Z^{n} \cap (tp^{e}\bP_{\a}) \cap [0,p^{e}]^{n} \right) \\
\phantom{\length_{S} \left( \frac{\overline{\a^{tp^{e}}}}{\overline{\a^{tp^{e}}}\cap \n^{[p^{e}]}} \right)} & = & \# \left( \Z^{n} \cap \left( p^{e} \cdot \left( t\bP_{\a} \cap [0,1]^{n} \right)\right)\right) \, \, .
\end{array}
\]
The result now follows from the following elementary identity: if
\renewcommand{\bQ}{\mbox{{\bfseries Q}}}
$\bQ \subseteq \R^{n}$ is any polytope, then $\displaystyle \vol_{\R^{n}}(\bQ) = \lim_{q \to \infty} \frac{\#(q\bQ)}{q^{n}}$.  See \cite[Page 111]{FultonIntroToric} for further discussion.
\end{proof}

\begin{remark}
Note, in particular, that the formula in Theorem~\ref{thm:monomialidealcalc} above is independent of both the coefficient field $k$ and even the characteristic $p > 0$.

\end{remark}

\begin{example}
\label{ex:monomialcuspidealexample}
Consider now the ideal $\a = \langle x^{3}, y^{2} \rangle$ inside $k \llbracket x,y \rrbracket$ where $k$ is an $F$-finite field of characteristic $p >0$.  If $t \in \R_{\geq 0}$, the intersection $t \bP_{\a} \cap [0,1]^{n}$ is non-trivial only if $t \leq 5/6$, in which case it takes on one of the three forms indicated in the diagrams below.
\begin{center}
\begin{tikzpicture}

\fill[gray] (0, 2*2.12*0.22) -- (3*2.12*0.22, 0) -- (2.12, 0) -- (2.12, 2.12) -- (0, 2.12) -- cycle;
\begin{scope}[very thick]
\draw(0,0)--(4,0);
\draw(0,0)--(0,4);
\end{scope}
\begin{scope}
\draw(0, 0)--(2.12, 0);
\draw(0, 0)--(0, 2.12);
\draw(2.12, 0)--(2.12,2.12);
\draw(0,2.12)--(2.12,2.12);
\end{scope}
\begin{scope}[dashed, thick]
\draw(0, 2*2.12*0.22)--(3*2.12*0.22, 0);
\end{scope}
\draw (0-0.3, 2*2.12*0.22) node{$2t$};
\draw (3*2.12*0.22, 0-0.3) node{$3t$};
\draw (0.8*2.12, 1.4*2.12) node{$0 \leq t \leq {1 \over 3}$};

\fill[gray] (5, 2*2.12*0.4) -- (5+2.12, -0.6666*2.12 +2*2.12*0.4) -- (5+2.12, 2.12) -- (5+0, 2.12) -- cycle;
\begin{scope}[very thick]
\draw(5,0)--(9,0);
\draw(5,0)--(5,4);
\end{scope}
\begin{scope}
\draw(5, 0)--(5+2.12, 0);
\draw(5, 0)--(5, 2.12);
\draw(5+2.12, 0)--(5+2.12,2.12);
\draw(5,2.12)--(5+2.12,2.12);
\end{scope}
\begin{scope}[dashed, thick]
\draw(5, 2*2.12*0.4)--(5+3*2.12*0.4, 0);
\end{scope}
\draw (5-0.3, 2*2.12*0.4) node{$2t$};
\draw (5+3*2.12*0.4, 0-0.3) node{$3t$};
\draw (5+0.8*2.12, 1.4*2.12) node{${1 \over 3} \leq t \leq {1 \over 2}$};

\fill[gray] (10- 2.12*1.5 + 2*2.12*0.6*1.49, 2.12) -- (10+2.12, 2.12) -- (10+2.12, 2*2.12*0.6-0.6666*2.12) -- cycle;
\begin{scope}[very thick]
\draw(10,0)--(14,0);
\draw(10,0)--(10,4);
\end{scope}
\begin{scope}
\draw(10, 0)--(10+2.12, 0);
\draw(10, 0)--(10, 2.12);
\draw(10+2.12, 0)--(10+2.12,2.12);
\draw(10,2.12)--(10+2.12,2.12);
\end{scope}
\begin{scope}[dashed, thick]
\draw(10, 2*2.12*0.6)--(10+3*2.12*0.6, 0);
\end{scope}
\draw (10-0.3, 2*2.12*0.6) node{$2t$};
\draw (10+3*2.12*0.6, -0.3) node{$3t$};
\draw (10+0.8*2.12, 1.4*2.12) node{${1 \over 2} \leq t \leq {5 \over 6}$};

\end{tikzpicture}
\end{center}
Using Theorem~\ref{thm:monomialidealcalc}, one readily computes
\[
s(k\llbracket x,y \rrbracket, \langle x^{3}, y^{2} \rangle^{t} ) = \left\{
  \begin{array}{c@{\qquad}c}
    1 - 3t^{2} & 0 \leq t \leq \frac{1}{3} \vspace{1mm} \\
    \frac{4}{3} - 2t & \frac{1}{3} \leq t \leq \frac{1}{2} \vspace{1mm}\\
\frac{25}{12} - 5t + 3t^{2} & \frac{1}{2} \leq t \leq \frac{5}{6}
  \end{array}
\right. \, \, .
\]

\end{example}

\bibliographystyle{skalpha}
\bibliography{CommonBib}

\end{document}